\documentclass[10pt]{amsart}
\usepackage{amsmath,amssymb,amsthm,mathtools}
\usepackage{newtxtext,newtxmath}
\usepackage{geometry}
\usepackage{microtype}
\usepackage{xcolor}
\usepackage{bm}
\usepackage{tikz}
\usepackage{soul}

\usepackage[colorlinks=true,linkcolor=blue,citecolor=blue,urlcolor=blue]{hyperref}

\allowdisplaybreaks
\theoremstyle{plain}
\newtheorem{theorem}{Theorem}[section]

\newtheorem{lemma}[theorem]{Lemma}
\newtheorem{corollary}[theorem]{Corollary}
\theoremstyle{remark}

\numberwithin{equation}{section}
\newcommand{\sinc}{\operatorname{sinc}}
\newcommand{\R}{\mathbb R}

\newcommand{\PW}{\mathrm{PW}}
\newcommand{\dd}{\,\mathrm d}
\newcommand{\ee}{\,\mathrm e}
\newcommand{\ii}{\,\mathrm i}

\newcommand{\norm}[1]{\left\lVert#1\right\rVert}

\newcommand{\supp}{\operatorname{supp}}
\newcommand{\dist}{\operatorname{dist}}

\author[A. Bondarenko]{Andriy Bondarenko}
\address{Department of Mathematical Sciences \\ Norwegian University of Science and Technology \\ NO-7491 Trondheim \\ Norway}
\email{andriybond@gmail.com}

\author[J. Ortega-Cerd\`{a}]{Joaquim Ortega-Cerd\`{a}}
\address{Department de Matem\`{a}tiques i Inform\`{a}tica, Universitat de
Barcelona, Gran Via 585, 08007 Barcelona, Spain and
CRM, Centre de Recerca Matemàtica, Campus de Bellaterra Edifici C, 08193
Bellaterra, Barcelona, Spain}
\email{jortega@ub.edu}

\author[K. Seip]{Kristian Seip}
\address{Department of Mathematical Sciences, Norwegian University of Science and Technology (NTNU), 7491 Trondheim, Norway}
\email{kristian.seip@ntnu.no}

\thanks{Bondarenko and Seip were supported in part by Grant 334466 of the
Research Council of Norway. Ortega-Cerd\`a has been supported by grant PID2024-160033NB-I00 of the Agencia Estatal de Investigación and by the 2024 ICREA 00142 grant of the Generalitat de Catalunya.}
\title[The zeros of point evaluation extremals in Paley--Wiener spaces]{The zeros of point evaluation extremals in Paley--Wiener spaces}
\subjclass[2020]{30D15, 41A17, 42A38, 46E30}
\date{\today}
\hypersetup{pdftitle={Some Analytic Properties of Extremal Functions in PWp Spaces},pdfsubject={Shifted-lattice zero asymptotics and Fourier regularity for Paley--Wiener extremals}}
\begin{document}
\begin{abstract}
For $1\le p < \infty$, $\varphi_p$ is the function of minimal $L^p$ norm among all functions $f$ of exponential type at most $\pi$ for which $f(0)=1$. We start from the ansatz that the positive zeros $\tau_n$ of $\varphi_p$  are close to \(n+1/p-1/2\) and use a weighted convex correction to first place them in an \(\ell^2\)-neighborhood of this sequence. A bootstrap argument then improves this to a {Poincar\'e asymptotic} expansion of $\tau_n-(n+\frac1p-\frac12)$ in terms of odd powers of $(n+\frac1p-\frac12)^{-1}$.  We also get precise information about the Fourier transform $\widehat{\varphi}_p$, a result that in particular implies that for all integers $n\ge 2$, $\widehat{\varphi}_{2n}$ is an integrable function which, when convolved with itself $2n-2$ times, is constant on the interval $[-\pi,\pi]$. 
In particular, we identify the extremizer in an
 autoconvolution problem posed by B. Green in 2001.

\end{abstract}
\maketitle

\section{Introduction and main results}
Following \cite{BCOS}, we let $\varphi_p$ for $1\le p <\infty$ denote  
the unique solution to the extremal problem
\begin{equation}\label{eq:extremalproblem}
	\frac{1}{\mathscr{C}_p} = \inf_{f \in PW^p} \left\{\|f\|_{L^p(\mathbb R)}^p \,: \, f(0) = 1\right\},
\end{equation}
where
$\mathscr{C}_p$  is the smallest positive constant $C$ such that the inequality 
\begin{equation}\label{eq:pointeval}
	|f(0)|^p \leq C \|f\|_{L^p(\mathbb R)}^p
\end{equation}
holds for every $f$ in the Paley--Wiener space $PW^p$, i.e., the subspace of $L^p(\mathbb{R})$ consisting of entire functions of exponential type at most $\pi$. We have trivially $\varphi_2(x)=\sinc(\pi x)$, where as usual $\sinc x := \frac{\sin{x}}{x}$, while $\varphi_1$ is the H\"{o}rmander--Bernhardsson function that was recently characterized in \cite{BORS}. It is known from \cite{BCOS} that for general $p$,
\begin{equation} \label{eq:phip} \varphi_p(x)=\prod_{n=1}^\infty \left(1-\frac{x^2}{\tau_n(p)^2}\right), \end{equation}
with $\tau_n(p)$ an increasing sequence of positive numbers. 

The remarkable analytic structure underpinning the H\"{o}rmander--Bernhardsson function $\varphi_1$ led in \cite{BORS} to the expansion $\tau_n(1)=n+\tfrac12-\sigma((n+\frac12)^{-1})$, where $\sigma(z)$ is an odd power series with nonnegative coefficients and domain of convergence $|z|\le 2$. The main result of this paper verifies that the zeros $\tau_n(p)$ enjoy a similar behavior for $1<p<\infty$.
\begin{theorem}\label{thm:main}
Let $1<p<\infty$.  There exist real numbers
$a_{p,0},a_{p,1},\ldots$ such that, for every integer $M\geq0$,
\begin{equation}\label{eq:complete-zero-Poincare}
 \tau_n=n+\tfrac1p-\tfrac12+\sum_{k=0}^{M}\frac{a_{p,k}}{(n+\frac1p-\frac12)^{2k+1}}
 + O_M\left( \frac{1}{(n+\frac1p-\frac12)^{2M+3}}\right).
\end{equation}
\end{theorem}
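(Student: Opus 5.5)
We prove \eqref{eq:complete-zero-Poincare} by converting the extremality of $\varphi_p$ into an equation for the zero sequence, and then solving this equation in two stages: first a rough $\ell^2$ localization, then a bootstrap that produces the expansion term by term. Throughout we write $\lambda_n:=n+\frac1p-\frac12$ and $\delta_n:=\tau_n-\lambda_n$.

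The starting point is the variational characterization from \cite{BCOS}. Since $\varphi_p$ minimizes $\|f\|_p^p$ subject to $f(0)=1$, we have
\[
\int_{\R}|\varphi_p|^{p-2}\varphi_p\,\overline{g}\,\dd x=0
\qquad\text{for all } g\in PW^p \text{ with } g(0)=0 .
\]
Equivalently, the Fourier transform of $|\varphi_p|^{p-2}\varphi_p$ is constant on $(-\pi,\pi)$. We test this orthogonality against
\[
g(x)=\frac{x\,\varphi_p(x)}{\tau_n(x-\tau_n)} .
\]
Because $|\varphi_p|^{p-2}\varphi_p$ is odd under $x\mapsto -x$ only through symmetrization, it is cleaner to use the even/odd combination of $g$ at $\pm\tau_n$. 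The result is a family of equations
\[
F_n(\tau_1,\tau_2,\dots)=0,\qquad n\ge1,
\]
each of which involves $\int|\varphi_p|^p\,\frac{x}{x^2-\tau_n^2}\,\dd x$.

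The heuristic behind the ansatz is as follows. If $\tau_k=\lambda_k$ exactly, then $\varphi_p$ is comparable to a Gamma-quotient of the form $\Gamma(\cdot)^2/\Gamma(\cdot)^2$. This gives
\[
|\varphi_p(x)|\asymp |x|^{-(2/p)}\,|\cos\pi(x-\tfrac1p+\tfrac12)|
\]
up to lower-order terms. The shift $\frac1p-\frac12$ is exactly what makes $|\varphi_p|^{p-2}\varphi_p$ behave like $|x|^{-(2-2/p)}$ times an oscillation with the phase needed for the orthogonality conditions.

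\textbf{Step 1 ($\ell^2$ localization).} Following the weighted convex correction mentioned in the abstract, we write
\[
\varphi_p=\Phi_0\cdot\exp\Big(\sum_k \delta_k\,\partial_{\tau_k}\log\Phi\Big),
\]
where $\Phi_0$ is the Gamma-type product with zeros $\lambda_k$. We then compare $\|\varphi_p\|_p^p$ with the value of the functional at the competitor $\Phi_0$ (suitably normalized and made to belong to $PW^p$). By strict convexity of $t\mapsto|t|^p$, the excess $\|\Phi_0\|_p^p-\|\varphi_p\|_p^p$ controls a weighted quadratic form in the perturbation, and this form in turn controls $\sum_k\delta_k^2$ (possibly with weights $\lambda_k^{-s}$). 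Together with an a priori bound on the excess coming from explicit Gamma asymptotics, this yields $(\delta_n)\in\ell^2$. It also gives decay $\delta_n=O(\lambda_n^{-1+\epsilon})$ after a local argument: the zeros of an entire function of exponential type whose modulus on $\R$ is close to that of $\Phi_0$ in a weighted $L^p$ sense must lie near the zeros of $\Phi_0$.

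\textbf{Step 2 (bootstrap).} We linearize the orthogonality equations around $\lambda$ and write them as
\[
\delta_n=\sum_k K_{nk}\,\delta_k+R_n(\delta),
\]
where the kernel $K$ is a Hilbert-type (Cauchy) matrix in $1/(\lambda_n\pm\lambda_k)$ and $R$ is quadratic. Assume inductively that
\[
\delta_n=\sum_{k<M}a_{p,k}\,\lambda_n^{-2k-1}+O\big(\lambda_n^{-2M-1}\big).
\]
Inserting this into the equation, the sums $\sum_k \lambda_k^{-2j-1}/(\lambda_n^2-\lambda_k^2)$ and the integrals of $|\Phi_0|^p$ against rational functions admit full asymptotic expansions in odd powers of $\lambda_n^{-1}$. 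These expansions come from Euler--Maclaurin and Stirling, and evenness of $\varphi_p$ forces only odd powers to appear. This determines $a_{p,M}$ and improves the remainder by $\lambda_n^{-2}$. The induction starts from Step 1.

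\textbf{Main obstacle.} The hardest step is the invertibility of $I-K$ on weighted sequence spaces $\{\delta:\ \sup_n\lambda_n^{s}|\delta_n|<\infty\}$. Without it, an improvement in $R$ does not propagate to an improvement in $\delta$. We would first try to diagonalize $K$ via Mellin transform, since Hilbert-type matrices are approximately Mellin multipliers. The aim is to show that the symbol of $I-K$ has no zeros on the relevant vertical lines $\Re s=2k+1$, using the explicit dependence on $p$ for $1<p<\infty$. Any zero of the symbol would produce a non-odd-power term, so excluding such zeros is exactly where the restriction on $p$ and the specific shift $\frac1p-\frac12$ must be used.
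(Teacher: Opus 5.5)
Your two-stage architecture ($\ell^2$ localization, then a bootstrap on the zero equations) matches the paper's, but each stage has a genuine gap.

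\textbf{Step 1 fails as described.} Since $E_p\in PW^p$ with $E_p(0)=1$, the orthogonality relation gives exactly $\|E_p\|_p^p-\|\varphi_p\|_p^p=p\int_\R B_p(\varphi_p,E_p-\varphi_p)\,\dd x$. Because $B_p(a,h)$ behaves like $|a|^{p-2}h^2$ with $|a|\lesssim|x|^{-2/p}$, this controls only $\int|x|^{\frac4p-2}|E_p-\varphi_p|^2\,\dd x$, modulo oscillating factors. That weight is off by a factor $x^2$. Every function of size $|x|^{-2/p}$, for instance $(K(p)-1)E_p$, makes it finite, and zero displacements $\delta_k$ contribute only $\sum_k\delta_k^2\lambda_k^{-2}$. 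The excess is merely some finite number, so the bound is compatible with $(\delta_k)$ merely bounded. It gives neither $(\delta_k)\in\ell^2$ nor $\delta_n=O(\lambda_n^{-1+\epsilon})$; in the paper, $O(c_n^{-1})$ only emerges from the inverse-function argument.

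The missing idea is to work at the $x^2$-weighted level. One shows that $\widehat{x^2N_p(E_p)}$ has only the integrable singularities $(\pi\mp\xi)^{1-\alpha}$ on $(-\pi,\pi)$. The would-be $(\pi\mp\xi)^{-\alpha-1}$ terms have coefficient $\cos(\frac{\pi(\alpha+1)}2-\frac\pi p)=0$, and this is where the shift $\frac1p-\frac12$ really enters. Hence $\ell_0$ is bounded on $\mathcal X_\alpha$. Minimizing the renormalized functional $\ell_0(H)+\int x^2B_p(E_p,H)\,\dd x$ over $\mathcal X_\alpha$ then yields $\varphi_p\propto E_p+H_*$ with $\int(1+x^2)^{2/p}|H_*|^2\,\dd x<\infty$. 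This is not a comparison of norms, since $\int x^2|E_p+H|^p\,\dd x$ itself diverges. It is exactly the scale at which the mean value theorem and Plancherel--P\'olya give $(\tau_n-c_n)\in\ell^2$.

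\textbf{Step 2 leaves your own ``main obstacle'' open}, and a Mellin-symbol analysis is not what is needed. The fact you are missing is an exact identity. For $R_k(\bm t)=\int_\R|\phi_{\bm t}|^p\frac{x^2}{t_k^2-x^2}\,\dd x$ one has $\partial_{t_j}R_k=\frac{2p}{t_j}\frac{t_k^2R_k-t_j^2R_j}{t_j^2-t_k^2}$ for $j\ne k$. So for the normalized residual $z_k=t_k\kappa(\bm t)R_k(\bm t)$, the off-diagonal Jacobian entries are $2p\frac{t_jz_k-t_kz_j}{t_j^2-t_k^2}$, proportional to the residual itself. Moreover $D\mathscr Z(\bm\tau)=\lambda_pI$ with $\lambda_p=\kappa(\bm\tau)/\mathscr C_p>0$, which follows from $\tau_k\partial_{\tau_k}R_k(\bm\tau)=\|\varphi_p\|_p^p$. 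Thus the Hilbert-type part is not an $O(1)$ operator whose symbol must avoid zeros; it is a perturbation of size $\|\mathscr Z\bm t\|$.

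The paper linearizes around the unknown $\bm\tau$ rather than around $\bm c$. It follows the path $\bm t(s)=\mathscr Z^{-1}(sf^{(K)})$ to the finite modification $\bm a^{(K)}$ of $\bm c$, whose residual $f^{(K)}$ has an explicit odd-power expansion. It then inverts $D\mathscr Z(\bm t(s))$ on weighted spaces using the closed diagonal formula and a Schur complement with a large cutoff. Even granting invertibility, your induction in norms $\sup_n\lambda_n^s|\delta_n|$ would lose a logarithm at each step under Hilbert-type sums of non-smooth remainders. This is why the paper measures remainders in $\ell^2_{2M+1}$ (the spaces $\mathcal H_M$) and uses Hilbert's inequality with moment subtraction.

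\textbf{Minor points.} The integral $\int|\varphi_p|^p\frac{x}{x^2-\tau_n^2}\,\dd x$ vanishes identically by oddness; the correct equations are $\int|\varphi_p|^p\frac{x^2}{\tau_n^2-x^2}\,\dd x=0$. Your envelope should be $|\sin\pi(x-\frac1p+\frac12)|$, since $|\cos\pi(x-\frac1p+\frac12)|$ vanishes at $n+\frac1p$ rather than at the zeros $c_n$.
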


In the general case $1<p<\infty$, $p\neq 2$, only rather rudimentary results about $\tau_n(p)$ are known from earlier work. See for example Theorem 1.4, Theorem 1.5, and Theorem 5.1 of \cite{BCOS}. Previous studies have mainly focused on estimating $\mathscr{C}_p$, which, as a curious by-product of our proof of Theorem~\ref{thm:main}, can be expressed as 
\begin{equation}
    \label{eq:Cpformula}
 \frac1{\mathscr C_p}=
 \frac{2\pi^{3/2}\Gamma\!\left(\frac{p+1}{2}\right)}
      {\Gamma\!\left(\frac p2\right)}
 \left[
 \frac{\Gamma\!\left(\frac12+\frac1p\right)^2}{\pi}
 \right]^p
 \prod_{n=1}^{\infty}
 \left(
 \frac{n+\frac1p-\frac12}{\tau_n(p)}
 \right)^{2p}, \qquad 1\le p < \infty,
\end{equation}
in agreement with what was found in \cite{BORS} in the case $p=1$.
Levin and Lubinsky \cite{LevinLubinsky} found that $\mathscr{C}_p^{-1}$ appears as a scaling limit for certain Christoffel functions. See also Lubinsky's survey \cite{LubinskyScaling}, where Problems 4 and 5 specifically ask for estimates of $\mathscr{C}_p$ and for information about the extremizers $\varphi_p$. Various bounds for $\mathscr{C}_p$  were found in \cite{BCOS} and more recently by Instanes in \cite{Instanes}. Such results in turn imply estimates for the Nikolskii constants for trigonometric polynomials by \cite[Theorem~1]{GorbachevMartyanov}. As we will see below, a completely different origin of the problem of computing $\mathscr{C}_4$ can be found in work of Green in combinatorial number theory \cite{Green2001, GreenProblems}. 

Theorem~\ref{thm:main} verifies \cite[Conjecture 9.9]{BCOS} in the range $p\ge1$, and also proves \cite[Conjecture 9.12]{BCOS}. To record another resolution of a problem from \cite{BCOS}, we introduce the Fourier normalization
\[
 \widehat f(\xi)=\int_\R f(x)e^{-ix\xi}\dd x,\qquad
 f(x)=\frac1{2\pi}\int_\R\widehat f(\xi)e^{ix\xi}\dd\xi
\]
whenever the integrals are classically defined.  For general functions and all distributions occurring below, the Fourier transform is understood in the tempered-distribution sense with the same normalization. For \(s\) in \(\R\), \(H^s(\R)\) denotes the usual Sobolev space, with
\[
 \norm{q}_{H^s}^2\coloneqq \frac1{2\pi}\int_\R(1+|t|^2)^s|\widehat q(t)|^2\dd t
\]
when the right-hand side is finite. Since functions in $H^s(\R)$ are continuous when $s>\tfrac12$, the following corollary is a stronger assertion than \cite[Conjecture 9.13]{BCOS}.

\begin{corollary}\label{cor:Fourier}
Fix $1\le p < \infty$. There exists a function $h_p$ in $\bigcap_{s<\frac2p+\frac12}H^{s}(\R)$ with $\supp h_p \subset [-\pi,\pi]$ such that
\[ \widehat{\varphi}_p(\xi)=A(p)\big(\cos \frac{\xi}{2} \big)^{\frac{2}{p}-1}+ h_p(\xi), \quad {|\xi|<\pi},\]
where 
\[ A(p) \coloneqq \frac{2^{\frac2p-1}\Gamma^2(\frac1p+\frac12)}{\Gamma(\frac2p)}\prod_{n\ge 1} \frac{(n+\tfrac1p-\tfrac12)^2}{\tau_n(p)^2}.\]
\end{corollary}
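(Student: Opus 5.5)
Throughout, write $\beta \coloneqq \frac1p-\frac12$ and compare $\varphi_p$ with the explicit model function
\[
 G_p(x)\coloneqq \prod_{n\ge1}\Bigl(1-\frac{x^2}{(n+\beta)^2}\Bigr)=\frac{\Gamma(1+\beta)^2}{\Gamma(1+\beta-x)\,\Gamma(1+\beta+x)},
\]
whose zeros are exactly the sequence $n+\beta$. The Fourier transform of $G_p$ is classical. By the beta-integral (Cauchy/Ramanujan) formula, $1/(\Gamma(a-x)\Gamma(a+x))$ has Fourier transform supported in $[-\pi,\pi]$, equal to a constant times $(2\cos\frac\xi2)^{2a-2}$. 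With $a=1+\beta=\frac12+\frac1p$ this gives
\[
 \widehat G_p(\xi)=\frac{2^{\frac2p-1}\Gamma(\frac1p+\frac12)^2}{\Gamma(\frac2p)}\Bigl(\cos\frac\xi2\Bigr)^{\frac2p-1}\mathbf 1_{[-\pi,\pi]}(\xi).
\]
This is integrable since $\frac2p-1>-1$, and the constant matches the Gamma prefactor in $A(p)$.

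Now write $\varphi_p=c\,G_p+R$, with $c=\prod_n (n+\beta)^2/\tau_n^2$. This constant is the limit of $\varphi_p(x)/G_p(x)$ along the imaginary axis, by Theorem~\ref{thm:main}. Then $\widehat{\varphi}_p=A(p)(\cos\frac\xi2)^{\frac2p-1}+\widehat R$ on $(-\pi,\pi)$, and I set $h_p\coloneqq\widehat R$. Both $\varphi_p$ and $G_p$ have exponential type $\pi$: for $\varphi_p$ this is given, and for $G_p$ it is visible from the Gamma formula. Hence $R$ is entire of type at most $\pi$, so by Paley--Wiener $\supp\widehat R\subset[-\pi,\pi]$, provided $R$ is tempered on $\R$. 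It therefore remains to show that $R\in\bigcap_{s<\frac2p+\frac12}H^s$, i.e. $\int(1+x^2)^s|R(x)|^2\dd x<\infty$. This reduces to the pointwise bound $|R(x)|=O(|x|^{-\frac2p-1+\epsilon})$, or at least that bound in an averaged sense.

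To get this decay, write $\varphi_p(x)=c\,G_p(x)\,Q(x)$ with
\[
 Q(x)=\prod_n \frac{1-x^2/\tau_n^2}{1-x^2/(n+\beta)^2}\cdot\frac{\tau_n^2}{(n+\beta)^2}=\prod_n\frac{\tau_n^2-x^2}{(n+\beta)^2-x^2},
\]
so that $R=c\,G_p(Q-1)$. Stirling gives $|G_p(x)|\asymp |x|^{-2\beta-1}|\cos\pi(x-\beta)|=|x|^{-\frac2p}|\cos\pi(x-\beta)|$ on the real line. So I need $|G_p(Q-1)|\lesssim |x|^{-\frac2p-1}$, that is, $Q(x)-1=O(1/|x|)$ with the zero of the cosine cancelling the poles of $Q$. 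By Theorem~\ref{thm:main} with $M=0$, $\tau_n-(n+\beta)=a_{p,0}/(n+\beta)+O(n^{-3})$. Write $\log Q=\sum_n \log\bigl(1+\frac{\tau_n^2-(n+\beta)^2}{(n+\beta)^2-x^2}\bigr)$, where the numerators are $2a_{p,0}+O(n^{-2})$. The sum of $2a_{p,0}/((n+\beta)^2-x^2)$ equals $a_{p,0}$ times an explicit digamma expression, namely $\frac{1}{2x}[\psi(1+\beta+x)-\psi(1+\beta-x)]$ up to constants. Against the cosine factor this is $O(1/|x|)$: the reflection formula turns the difference into a $\pi\cot$ term plus $O(1/x)$, and the cotangent pole is cancelled by $\cos\pi(x-\beta)$. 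The $O(n^{-2})$ remainders contribute $O(1/|x|)$ away from the poles, and the nonlinear terms of the logarithm are handled similarly.

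The main obstacle is uniformity near the points $x\approx n+\beta$, where individual factors of $Q$ blow up. There I would not use the logarithm. Instead I would compare directly: near $x_0=n+\beta$, the product $G_pQ$ replaces the factor $(n+\beta)^2-x^2$ by $\tau_n^2-x^2$, with $|\tau_n-(n+\beta)|=O(1/n)$. The local difference is then $|x|^{-\frac2p}\cdot O(1/n)\cdot\sup|\text{smooth cofactor}|$, which is $O(|x|^{-\frac2p-1})$. This yields $R(x)=O((1+|x|)^{-\frac2p-1})$ on $\R$, hence $R\in H^s$ for all $s<\frac2p+\frac12$, which completes the plan. For $p=1$ the same argument runs with the $p=1$ expansion recalled from \cite{BORS} in place of Theorem~\ref{thm:main}.
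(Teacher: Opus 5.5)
Your proposal is correct and is essentially the paper's argument. With $\Delta_n=\tau_n^2-c_n^2=O(1)$, the paper splits off the $m$-th factor on $I_m=[c_m-\frac12,c_m+\frac12]$ and writes $r_p=K(p)\bigl[E_p(P_m-1)+\Delta_m\frac{E_p}{c_m^2-x^2}P_m\bigr]$ with $P_m=\prod_{n\ne m}\bigl(1+\frac{\Delta_n}{c_n^2-x^2}\bigr)$. It then controls $P_m-1$ by the crude bound $\sum_{n\ne m}\bigl|\frac{\Delta_n}{c_n^2-x^2}\bigr|\le C_p\frac{\log(2+m)}{m}$, so your digamma cancellation, which removes the logarithm, is a harmless refinement not needed for the open range $s<\frac2p+\frac12$.

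A few small fixes are needed in your write-up. First, the zeros of $G_p$ are at $n+\beta$, so the envelope is $|x|^{-2/p}|\sin\pi(x-\beta)|$, not $\cos$; only the sine cancels the cotangent pole. Second, near $c_m$ you must use that the cofactor $P_m$ is $1+O(\log(2+m)/m)$, not merely bounded, in order to control $E_p(P_m-1)$. Third, it is $h_p=\widehat R$, not $R$, that lies in $H^s$; that membership is exactly your condition $\int(1+x^2)^s|R(x)|^2\dd x<\infty$.
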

An interesting consequence of Corollary~\ref{cor:Fourier} is that it leads to a solution 
of the following problem. We define the convolution of $\widehat{f}$ and $\widehat{g}$ as follows:
\[\widehat{f}\ast\widehat{g}\,(\eta) := \int_{-\infty}^\infty \widehat{f}(\xi)\,\widehat{g}(\eta-\xi)\,\frac{d\xi}{2\pi}.\]
If $\widehat{\psi} \in L^1(\mathbb R)$ is supported on $[-\pi,\pi]$ and continuous on $(-\pi,\pi)$, we declare it to be an admissible function. We let the iterated convolution operator $\operatorname{C}_n$ act on admissible functions $\widehat{\psi}$ so that
\[\operatorname{C}_0\widehat{\psi} := \widehat{\psi}, \qquad \operatorname{C}_1\widehat{\psi} := \widehat{\psi} \ast \widehat{\psi}, \qquad \operatorname{C}_2\widehat{\psi} = \widehat{\psi} \ast \widehat{\psi} \ast \widehat{\psi}, \qquad \operatorname{C}_3\widehat{\psi} := \widehat{\psi} \ast \widehat{\psi} \ast \widehat{\psi} \ast \widehat{\psi}, \qquad\ldots\]
In \cite[Sec. 9]{BCOS}, it was asked whether for every positive integer $k$, there exists an admissible function $\widehat{\psi}$ such that 
\begin{equation}\label{eq:conveq} 
	\operatorname{C}_{2k}{\widehat{\psi}}(\xi) = 1 
\end{equation}
for every $-\pi\leq \xi \leq \pi$. The following corollary is immediate from the $L^p$ orthogonality condition (see \eqref{eq:Lportho} below) and Corollary~\ref{cor:Fourier}.
\begin{corollary}
For every $k\ge 1$, $\widehat{\varphi}_{2k+2}$  is a constant times an admissible solution to \eqref{eq:conveq}.
\end{corollary}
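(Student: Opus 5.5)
Set $p=2k+2$ and work with $\widehat{\varphi}_p$ directly. The plan is to read \eqref{eq:conveq} off from the $L^p$ orthogonality condition, and to obtain admissibility from Corollary~\ref{cor:Fourier}.

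\emph{The orthogonality condition.} Since $\varphi_p$ minimizes \eqref{eq:extremalproblem}, the standard variational argument gives, for every $f\in PW^p$ with $f(0)=0$,
\[
\int_\R |\varphi_p|^{p-2}\varphi_p\,\overline{f}\dd x=0 .
\]
Because $\varphi_p$ is real on $\R$ and $p-1=2k+1$ is an odd integer, $|\varphi_p|^{p-2}\varphi_p=\varphi_p^{2k+1}$. Applying the condition to $f(x)=(g(x)-g(0)\varphi_p(x))$ for $g\in PW^p$ gives
\[
\int_\R \varphi_p^{2k+1}\,\overline{g}\dd x=c\,\overline{g(0)}\qquad\text{for all } g\in PW^p,
\]
where $c=\|\varphi_p\|_p^p=1/\mathscr C_p>0$. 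We then test this against $g(x)=\sinc(\pi(x-t))$, whose Fourier transform is $\widehat g(\xi)=e^{-it\xi}\chi_{[-\pi,\pi]}(\xi)$. By Parseval, the left-hand side equals $\frac1{2\pi}\int_{-\pi}^{\pi}\widehat{\varphi_p^{2k+1}}(\xi)e^{it\xi}\dd\xi$, while the right-hand side is $c\,\sinc(\pi t)=\frac{c}{2\pi}\int_{-\pi}^{\pi}e^{it\xi}\dd\xi$. Since this holds for all real $t$, we get $\widehat{\varphi_p^{2k+1}}=c$ on $(-\pi,\pi)$ in the distributional sense.

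\emph{Converting powers to convolutions.} With the normalization of $\ast$ fixed in the paper, $\widehat{fg}=\widehat f\ast\widehat g$. Hence $\widehat{\varphi_p^{2k+1}}=\operatorname{C}_{2k}\widehat{\varphi}_p$, and the identity above reads $\operatorname{C}_{2k}\widehat\varphi_p=c$ on $(-\pi,\pi)$. Put $\widehat\psi=\lambda\widehat\varphi_p$ with $\lambda=c^{-1/(2k+1)}$. Then $\operatorname{C}_{2k}\widehat\psi=\lambda^{2k+1}c=1$ on $(-\pi,\pi)$.

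\emph{Admissibility.} By Corollary~\ref{cor:Fourier}, $\widehat\varphi_p=A(p)(\cos\frac\xi2)^{2/p-1}+h_p$ on $(-\pi,\pi)$, and $\widehat\varphi_p$ is supported on $[-\pi,\pi]$ by Paley--Wiener. The exponent is $2/p-1\in(-1,0)$, so the first term is integrable. We need $h_p$ to be continuous, which requires $\frac2p+\frac12>\frac12$; this always holds, so $h_p\in H^s$ for some $s>\frac12$. Therefore $\widehat\varphi_p\in L^1$, it is continuous on $(-\pi,\pi)$, and $\widehat\psi$ is admissible.

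\emph{The endpoints.} It remains to pass from $(-\pi,\pi)$ to the closed interval $[-\pi,\pi]$. The iterated convolution of $2k+1$ integrable functions with $2k+1\ge3$ is continuous on $\R$, since already $L^1\ast L^1$ paired with the endpoint-singular factors gives a continuous result. So the identity $\operatorname{C}_{2k}\widehat\psi=1$ extends to $\xi=\pm\pi$ by continuity.

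The main obstacle is the rigorous distributional step: justifying that the $L^1$ identity obtained from testing holds pointwise. This is handled by the continuity of $\operatorname{C}_{2k}\widehat\varphi_p$, which itself follows from the integrability guaranteed by Corollary~\ref{cor:Fourier}.
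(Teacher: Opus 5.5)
Your route is the paper's: orthogonality with $|\varphi_p|^{p-2}\varphi_p=\varphi_p^{2k+1}$, products turned into convolutions, and Corollary~\ref{cor:Fourier} for admissibility. Everything up to $\operatorname{C}_{2k}\widehat\varphi_p=1/\mathscr C_p$ almost everywhere on $[-\pi,\pi]$ is sound. The gap is the endpoint step.

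Integrability of the factors does not make an iterated convolution continuous. Your claim that already $L^1\ast L^1$ with these endpoint singularities is continuous is false for the very functions at hand. For $p=4$, Corollary~\ref{cor:Fourier} gives $\widehat\varphi_4(\xi)\asymp(\pi-|\xi|)^{-1/2}$ near $\pm\pi$. Then $\widehat\varphi_4\ast\widehat\varphi_4$ blows up logarithmically at $\xi=0$, where the singularity of one factor at $\pi$ meets that of the other at $-\pi$; equivalently, $\varphi_4^2\notin L^1(\R)$. So the bare integrability supplied by Corollary~\ref{cor:Fourier} gives neither a pointwise meaning to $\operatorname{C}_{2k}\widehat\psi(\pm\pi)$ nor the identity there.

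What is missing is an exponent count, and this is where the number $2k+1\ge3$ of factors genuinely enters.
\begin{itemize}
\item On the physical side, \eqref{eq:local-product-envelope} applied to the admissible sequence $\bm\tau$ gives $|\varphi_p(x)|\le C_p(1+|x|)^{-2/p}$. Hence $|\varphi_p|^{2k+1}\le C_p(1+|x|)^{-(2k+1)/(k+1)}$ is integrable. So $\widehat{\varphi_p^{2k+1}}$ is continuous on $\R$ and equals $1/\mathscr C_p$ on the closed interval.
\item On the Fourier side, Corollary~\ref{cor:Fourier} gives $\widehat\varphi_p\in L^q$ for every $q<\frac{k+1}{k}$, in particular for $q=\frac{2k+1}{2k}$. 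Iterating Young's inequality puts the $2k$-fold convolution of $|\widehat\varphi_p|$ in $L^{2k+1}$. H\"older's inequality then shows that the $(2k+1)$-fold convolution integral converges absolutely at every point and is continuous.
\end{itemize}
Together these identify $\operatorname{C}_{2k}\widehat\varphi_p$ pointwise with $\widehat{\varphi_p^{2k+1}}$ and give \eqref{eq:conveq} at $\xi=\pm\pi$.

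A smaller point: Corollary~\ref{cor:Fourier} describes $\widehat\varphi_p$ only on $(-\pi,\pi)$. Before calling it integrable (which you also use to turn products into convolutions), you should exclude a part supported at $\{\pm\pi\}$. Such a part would add terms $P(x)e^{\pm i\pi x}$, with $P$ a polynomial, to $\varphi_p$. That is incompatible with $\varphi_p(x)\to0$ as $|x|\to\infty$.
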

We will in a forthcoming paper (see below) establish positivity of $\widehat{\varphi}_p$ for $1<p$. Therefore, by Plancherel's identity, $\widehat{\varphi}_4$ yields the extremizer of \cite[Problem 35]{GreenProblems}, which, in our normalization, requires us to compute
\[ \inf\Big\{ \norm{C_1 h}_{L^2(\R)}^2: \ \supp h\subset [-\pi,\pi],\ h\in L^1(\R), \ h(\xi)\ge 0, \ \int_{-\pi}^\pi h(\xi) d\xi=2\pi \Big\}. \] In this normalization, the infimum equals $2\pi\mathscr C_4^{-1}$.
This problem, first stated in \cite{Green2001} without the constraint $h\ge 0$, has been considered by many authors, see for instance the latest contribution~\cite{Rechnetzer}. 

To close this introduction, we give an outline of our strategy for proving Theorem~\ref{thm:main}. Our starting point is the $L^p$ orthogonality condition from \cite[Thm. 3.6]{BCOS}: We have $\varphi=\varphi_p$ for real even $\varphi \in PW^p$, with $p\ge 1$, if and only if $\varphi(0)=1$ and 
\begin{equation} \label{eq:Lportho}   \int_{-\infty}^\infty f(x) |\varphi(x)|^{p-2} \varphi(x) dx =0 \end{equation}
holds for every even function $f$  in $PW^p$ satisfying $f(0)=0$. This condition was used successfully in \cite{BORS, BORSprelim}, with arguments relying crucially on the fact that the ``kernel'' $|\varphi|^{p-2}\varphi$ takes values $\pm 1$ when $p=1$. In the present less favorable case, we will take an alternative approach. 

From this point on, when $p$ is fixed, we suppress the dependence on $p$ and write just $\tau_n=\tau_n(p)$. We set $\gamma\coloneqq \frac1p-\frac12$ and start from the ansatz that the gamma quotient
\begin{equation}\label{eq:main-Gamma-product}
 E_p(z)\coloneqq \frac{\Gamma(1+\gamma)^2}{\Gamma(1+\gamma+z)\Gamma(1+\gamma-z)}
 =\prod_{n\ge1}\left(1-\frac{z^2}{c_n^2}\right), \quad c_n\coloneqq n+\gamma,
\end{equation}
is a good initial approximation of $\varphi_p$. By a computation attributed to Ramanujan \cite[Section 7.8]{Titchmarsh},
\[ \widehat{E}_p(\xi)= \frac{2^{\frac2p-1}\Gamma^2(\frac1p+\frac12)}{\Gamma(\frac2p)} \big(\cos \frac{\xi}{2} \big)^{\frac{2}{p}-1},\ |\xi| <\pi, \]
and so Corollary~\ref{cor:Fourier} asserts that the scaled function
\[ K(p)E_p(x), \quad K(p)\coloneqq \prod_{n\ge 1} \frac{c_n^2}{\tau_n^2},\]
differs from $\varphi_p$ by a function with faster decay than $E_p$ (see Lemma~\ref{lem:Gamma-properties} below). 
The first step in our proof of Theorem~\ref{thm:main} verifies the following weaker version of this deviation. 
\begin{lemma}\label{prop:Gamma-correction}
For $p>1$ the function $r_p\coloneqq \varphi_p - K(p) E_p$
satisfies the integrability condition
\begin{equation}\label{eq:hilbert} \int_0^\infty |r_p(x)|^2 (1+x^2)^{\frac2p} \dd x < \infty .\end{equation}
\end{lemma}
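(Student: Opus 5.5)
The idea is to realize $\varphi_p$ as the solution of a convex minimization posed directly on the affine space $K E_p+\mathcal H$, where $\mathcal H$ is a weighted Hilbert space that encodes \eqref{eq:hilbert}. Let $\mathcal H$ be the space of even entire functions $h$ of exponential type at most $\pi$ with $h(0)=0$ and
\[
\norm{h}_{\mathcal H}^2\coloneqq\int_0^\infty |h(x)|^2(1+x^2)^{\frac2p}\dd x<\infty .
\]
On the Fourier side, $\mathcal H$ consists of functions in $H^{2/p}(\R)$ supported in $[-\pi,\pi]$, subject to the evenness and vanishing-mean constraints. Two consequences follow. First, $\mathcal H$ is a closed subspace. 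Second, $\mathcal H$ is dense, in the $L^p$ norm, in the even functions $f\in PW^p$ with $f(0)=0$: smooth even functions compactly supported in $(-\pi,\pi)$ with mean zero lie in $\mathcal H$ and are dense there.

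From Lemma~\ref{lem:Gamma-properties} (Stirling), $E_p(x)\asymp |x|^{-2/p}|\sin \pi(x-\gamma)|$ up to lower-order terms. In particular $E_p\in L^p$ for $p>1$, while $E_p$ just fails to lie in the weighted $L^2$ space. This is why the lemma is a genuine cancellation statement about $\varphi_p$ and $K E_p$.

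Step 1: reduce to a minimization problem. Fix the constant $c=K(p)$ and consider
\[
J(h)\coloneqq \int_\R |c E_p+h|^p \dd x ,\qquad h\in \mathcal H .
\]
By density of $\mathcal H$, the infimum of $J$ over $\mathcal H$ agrees with the infimum of $\norm{cE_p+f}_p^p$ over even $f\in PW^p$ with $f(0)=0$, which is $c^p\mathscr C_p^{-1}$ and is attained only at $f=\varphi_p-cE_p$. Hence it suffices to show that $J$ has a minimizer $h_*$ in $\mathcal H$. Indeed, $cE_p+h_*$ then satisfies \eqref{eq:Lportho}, by density and continuity of $f\mapsto\int f|cE_p+h_*|^{p-2}(cE_p+h_*)$ on $PW^p$. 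By the uniqueness in \cite[Thm.~3.6]{BCOS}, $cE_p+h_*=c'\varphi_p$, and evaluating at $0$ gives $c'=c=K(p)$ after normalizing. So $r_p=h_*\in\mathcal H$, which is \eqref{eq:hilbert}.

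Step 2: existence of a minimizer. I would expand $J(h)=J(0)+p\,L(h)+B(h)$, where $L(h)=\int |cE_p|^{p-2}cE_p\,h$ and $B$ is the nonnegative Bregman remainder. The key estimates are:
\begin{itemize}
\item[(a)] $L$ is a bounded linear functional on $\mathcal H$. Since $|E_p|^{p-1}\lesssim |x|^{-2(p-1)/p}$, Cauchy--Schwarz against the weight $(1+x^2)^{2/p}$ reduces this to $\int |x|^{-4(p-1)/p-4/p}\dd x=\int|x|^{-4}<\infty$ at infinity, which holds for every $p>1$.
\item[(b)] $B$ is coercive on $\mathcal H$, in the sense that $B(h)\gtrsim \min\bigl(\norm{h}_{\mathcal H}^2,\norm{h}_{\mathcal H}^{p}\bigr)$ along minimizing sequences.
\end{itemize}
Given (a) and (b), minimizing sequences are bounded in $\mathcal H$. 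A weakly convergent subsequence converges locally uniformly, because point evaluations are continuous on $\mathcal H$. Then Fatou's lemma and convexity give a minimizer.

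The main obstacle is the coercivity (b). Pointwise, $B$ behaves like $|E_p|^{p-2}|h|^2$ where $|h|\lesssim|E_p|$, and like $|h|^p$ elsewhere. The weight $|E_p|^{p-2}\sim |x|^{4/p-2}|\sin\pi(x-\gamma)|^{p-2}$ is not comparable to $(1+x^2)^{2/p}$: it is off by $x^{-2}$, and it degenerates or blows up near the zeros $c_n$. My first attempt would therefore not aim for coercivity in the full $\mathcal H$ norm. Instead I would discretize: a function of exponential type $\pi$ is controlled in weighted $L^2$ by its samples on a separated sequence such as $\{n+\gamma+\tfrac12\}$, together with a derivative term. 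These sample points sit at the maxima of $|\sin\pi(x-\gamma)|$, where $|E_p|^{p-2}$ is comparable to $|x|^{4/p-2}$. To recover the missing factor $x^{2}$, I would exploit that minimizing sequences can be taken with $J(h)\le J(0)$, and combine this with the linear bound (a) on a dyadic decomposition. If this fails, the fallback is a penalized problem: add $\varepsilon\norm{h}_{\mathcal H}^2$, obtain minimizers $h_\varepsilon$, and show $\norm{h_\varepsilon}_{\mathcal H}$ is bounded uniformly in $\varepsilon$ by testing the Euler--Lagrange equation against $h_\varepsilon$ itself and using (a). In that variant the uniform bound is the crux.
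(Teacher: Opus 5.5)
There is a genuine gap, and Step~2 as you pose it cannot succeed. Every element of $cE_p+\mathcal H$ takes the value $c$ at $0$. So by your own density argument and uniqueness, the only possible minimizer of $J$ on $\mathcal H$ is $h_*=c\varphi_p-cE_p$, not $\varphi_p-cE_p$. Indeed $r_p(0)=1-K(p)$, so $r_p\notin\mathcal H$ unless $K(p)=1$.

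With $c=K(p)$ you get $h_*=K(p)\,r_p+K(p)\bigl(K(p)-1\bigr)E_p$. Since $r_p$ lies in the weighted $L^2$ space (this is the lemma) and $E_p$ does not, $J$ attains its infimum on $\mathcal H$ only if $K(p)=1$. There is no reason to expect this for $p\ne2$. Moreover, $c=K(p)$ cannot be fixed in advance: convergence of $\prod_n(c_n/\tau_n)^2$ is only known after the $\ell^2$ localization of Lemma~\ref{cor:l2-localization}, which is derived from this lemma. In the paper the coefficient of $E_p$ is not prescribed. It emerges as $(1+H_*(0))^{-1}$ and is identified with $K(p)$ afterwards.

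Even with the coefficient of $E_p$ left free, the obstruction you found in (b) is real. The quadratic part of the unweighted energy essentially sees only $|E_p|^{p-2}|h|^2\asymp|x|^{2\alpha-2}|h|^2$, a factor $x^2$ short of the $\mathcal H$ norm. Neither sampling nor penalization recovers it: testing the penalized Euler--Lagrange equation against $h_\varepsilon$ gives only $\varepsilon\norm{h_\varepsilon}_{\mathcal H}\lesssim1$, plus control in the weaker norm. Note also that (a) is an absolute bound that does not see the shift of the zeros. It holds verbatim for a gamma quotient with zeros at $\pm(n+\delta)$, $\delta$ close to but different from $\gamma$, for which the conclusion is false.

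The missing idea is to put the weight $x^2$ into the functional. The paper minimizes $\mathcal J(H)=\ell_0(H)+\int_\R x^2B_p(E_p(x),H(x))\dd x$ over $\mathcal X_\alpha$, with no condition at $0$.
\begin{itemize}
\item Since $x^2|E_p|^{p-2}\asymp|x|^{2\alpha}$ off the zeros of $E_p$, this functional is coercive (Lemmas~\ref{lem:tail-dominance} and~\ref{lem:Gamma-dominance}).
\item Since $f/x^2\in\mathcal X_\alpha$ for every real even $f\in PW^p$ with $f(0)=0$, its Euler--Lagrange equation is exactly \eqref{eq:Lportho} for $E_p+H_*$.
\end{itemize}
The price is that $\int x^2N_p(E_p)H\dd x$ no longer converges absolutely. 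Boundedness of $\ell_0$ on $\mathcal X_\alpha$ comes from the Fourier analysis in Lemma~\ref{prop:Gamma-defect}. There the potential $(\pi\mp\xi)^{-\alpha-1}$ singularities of $\widehat{x^2N_p(E_p)}$ cancel because $\cos\bigl(\frac{\pi(\alpha+1)}{2}-\frac{\pi}{p}\bigr)=0$. This oscillatory cancellation is where the shift $\gamma=\frac1p-\frac12$ is used, and nothing in your outline plays its role.
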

It follows from this lemma that the sequence $(\tau_n-c_n)$ belongs to $\ell^2$ (see Lemma~\ref{cor:l2-localization} below). The latter result is parallel to that obtained in \cite{BORSprelim}, on which our subsequent study of the H\"{o}rmander--Bernhardsson function \cite{BORS} was based. However, our technique will be 
different from that of \cite{BORSprelim} which relied on a fixed point argument. In the present case, we begin by establishing the basic properties of $E_p$ and of the Fourier transform of $x^2|E_p(x)|^{p-2}E_p(x)$. This analysis leads, via \eqref{eq:Lportho}, to another extremal problem, formulated in a weighted Hilbert space. This problem is solved by a basic result of convex analysis, and its solution yields $\varphi_p$ as a perturbation of $E_p$. This Hilbert space approach leads to the $L^2$ bound in \eqref{eq:hilbert}.

The fact that $(\tau_n-c_n)$ is in $\ell^2$  will be the starting point for a bootstrap argument that will take us to Theorem~\ref{thm:main}. 
This part of our proof relies on a reformulation of \eqref{eq:Lportho}, namely that an infinite product of the form
\[ \varphi(x)=\prod_{n=1}^\infty \left(1-\frac{x^2}{t_n^2}\right), \]
with $(t_n)$ an increasing sequence of positive numbers such that $(t_n-c_n)$ is in $\ell^2$, will satisfy $\varphi=\varphi_p$ if and only if
\begin{equation} \label{eq:reform} \int_{\R} |\varphi(x)|^p \frac{x^2}{t_k^2-x^2} \dd x =0, \qquad k=1,2,\ldots . \end{equation}
The closeness of $E_p$ to $\varphi_p$ is now embodied in the integral in \eqref{eq:reform} being $O(k^{-2})$ for $\varphi=E_p$. This observation leads to the idea that one may gain more information by studying the mapping that sends $(t_n)$ to the sequence defined by the integral in \eqref{eq:reform}. The bootstrap argument is based on a careful study of the rather remarkable properties of this mapping, or rather a suitably weighted version of it. A key point will be that its Jacobian is a diagonal matrix when  $t_n=\tau_n$ and that, for small deviations from the extremal sequence $(\tau_n)$, its off-diagonal is essentially a Hilbert matrix. The fine properties of this Jacobian matrix, along with the initial information about $(\tau_n-c_n)$ and an expansion of the integral in \eqref{eq:reform} for $t_n=c_n$, will form the basis for our bootstrap argument.

As usual, constants denoted by $C_p,c_p$ may change from line to line, and $A\asymp_p B$ means that both $A\le C_pB$ and $B\le C_pA$ hold. 

\subsection*{Forthcoming work} In the course of establishing the results of this paper, we found, with the help of GPT-6 Astra, an alternative method that yields much more information about the function $\varphi_p$ and its Fourier transform $\widehat{\varphi}_p$. This new approach also allows us to treat the non-convex range $0<p<1$ and, in particular, to establish uniqueness of the extremizer in this case as well. In addition, we have been able to clarify how the H\"{o}rmander--Bernhardsson function, see \cite{HB1993}, fits into the wider context of $PW^p$ for $0<p<\infty$. We are in the process of preparing another paper in which this new approach is presented and explored. Let us point out explicitly that positivity of $\widehat{\varphi}_p$ will be proved when $p>1$, and so we will verify that $\widehat{\varphi}_4$ in particular has the properties required to solve \cite[Problem 35]{GreenProblems}. 

We are of the opinion that, in spite of this new development, the method used in the present paper is of independent interest. First of all, it is close in spirit to the approach of the initial paper \cite{BCOS} that established some of the basic properties of the point evaluation extremizers for $PW^p$, and so reflects how our understanding of the problem has evolved. In addition, we hope that the idea of transforming the problem from $L^p$ to a Hilbert space, followed by a bootstrap argument, could be of interest in a wider context. For these reasons, we have decided not to abandon the present paper. 

\section{From the gamma quotient to \texorpdfstring{$\varphi_p$}{varphi	extunderscore p}: Proof of Lemma~\ref{prop:Gamma-correction}}
\label{sec:Gamma-correction}

We begin by recording some basic facts about the gamma quotient $E_p$. To this end, we will also need the weighted ratio of gamma functions
\begin{equation}\label{eq:A-m}
 m_p(x)\coloneqq x^\alpha
 \frac{\Gamma(x-\gamma)}{\Gamma(x+\gamma+1)}, \quad x>\max\{0,\gamma\}, 
\end{equation}
where we have set $\alpha\coloneqq \frac2p$.
\begin{lemma}
\label{lem:Gamma-properties}
For \({x>\max\{0,\gamma\}}\),
\begin{equation}\label{eq:Gamma-phase}
 x^\alpha E_p(x)
 =\frac{\Gamma(1+\gamma)^2}{\pi} m_p(x)\sin\pi(x-\gamma).
\end{equation}
Moreover, for $x\ge 1$ and for every integer \(j\ge0\),
\begin{equation}\label{eq:m-symbol}
 \left|\frac{\dd^j}{\dd x^j}\{m_p(x)-1\}\right|
 \le C_{j,p}x^{-2-j}.
\end{equation}
\end{lemma}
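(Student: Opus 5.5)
The identity \eqref{eq:Gamma-phase} follows from the reflection formula, and the symbol bound \eqref{eq:m-symbol} from the standard asymptotics of gamma quotients.

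\emph{The identity.} I would write $\Gamma(1+\gamma-x)=\Gamma(1-(x-\gamma))$ and apply $\Gamma(w)\Gamma(1-w)=\pi/\sin\pi w$ with $w=x-\gamma$. This gives
\[
 \frac{1}{\Gamma(1+\gamma-x)}=\frac{\Gamma(x-\gamma)\sin\pi(x-\gamma)}{\pi}.
\]
This is valid as an identity of meromorphic functions, and it is finite for $x>\gamma$ because $\Gamma(x-\gamma)$ has no poles there. Substituting into \eqref{eq:main-Gamma-product} gives
\[
 E_p(x)=\frac{\Gamma(1+\gamma)^2}{\pi}\,\frac{\Gamma(x-\gamma)}{\Gamma(x+\gamma+1)}\sin\pi(x-\gamma).
\]
Multiplying by $x^\alpha$ yields \eqref{eq:Gamma-phase} by the definition \eqref{eq:A-m}. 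No further work is needed here.

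\emph{The symbol estimate.} I would use the classical asymptotic expansion of a ratio of gamma functions:
\[
 \frac{\Gamma(x+a)}{\Gamma(x+b)}\sim x^{a-b}\sum_{k\ge0}\frac{\binom{a-b}{k}B_k^{(a-b+1)}(a)}{x^k},
\]
with generalized Bernoulli polynomials. It is cleaner to work with the logarithm. Using Stirling's series for $\log\Gamma$ with a remainder that is differentiable term by term, I would expand
\[
 \log m_p(x)=\alpha\log x+\log\Gamma(x-\gamma)-\log\Gamma(x+\gamma+1).
\]
Here $a-b=-1-2\gamma=-\alpha$, which is exactly why the power $x^\alpha$ cancels.

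The real point is that the $x^{-1}$ term vanishes. The $1/x$ coefficient in $\log\Gamma(x+a)-\log\Gamma(x+b)-(a-b)\log x$ is $\tfrac12\big[(a^2-a)-(b^2-b)\big]=\tfrac12(a-b)(a+b-1)$. With $a=-\gamma$ and $b=\gamma+1$ we get $a+b-1=0$. Hence $\log m_p(x)=O(x^{-2})$. More precisely, $\log m_p$ has an asymptotic expansion in powers of $x^{-1}$ starting at $x^{-2}$. In fact only even powers occur, by the symmetry $a+b=1$, but we do not need this.

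For the derivatives, I would rely on the fact that the Stirling remainder $R_N(z)$ satisfies $|R_N^{(j)}(z)|\le C_{N,j}|z|^{-N-j}$ for $\operatorname{Re} z\ge \delta>0$. An alternative that avoids bookkeeping is to note that $g(x):=\log m_p(x)$ extends holomorphically to a sector $|\arg x|<\pi/2$ with $|x|$ large, where $|g(x)|\le C|x|^{-2}$ by Stirling. Then Cauchy's estimate on the disc of radius $x/2$ about $x$ gives $|g^{(j)}(x)|\le C_jx^{-2-j}$.

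Finally I would set $m_p-1=e^{g}-1$ and apply Faà di Bruno. Each derivative of $e^{g}$ is $e^g$ times a polynomial in $g',g'',\dots$, and each such monomial is $O(x^{-2-j})$ or smaller. Together with $|e^{g}-1|\le C|g|$, this gives \eqref{eq:m-symbol} for $x$ large.

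For $x$ in a compact range $[1,X_0]$, note that $x\ge1>\gamma$ because $\gamma<1/2$ for $p>1$; when $p=1$, $\gamma=1/2$ still lies below $1$. So $m_p$ is smooth on $[1,X_0]$ and the bound holds trivially after adjusting constants.

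The only step needing care is the uniformity of the derivative bounds. The complex-analytic Cauchy-estimate route handles it cleanly, so I expect no serious obstacle. The key observation is the cancellation $a+b=1$ that removes the $x^{-1}$ term.
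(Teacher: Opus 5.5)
Your proposal is correct and follows the paper's route: the reflection formula gives \eqref{eq:Gamma-phase}, and the asymptotic expansion of the gamma quotient gives \eqref{eq:m-symbol}, where your observation that $a+b=1$ kills the $x^{-1}$ term corresponds to Fields's expansion for this quotient running in even powers of $x^{-1}$. The only variation is in the derivative bounds. The paper differentiates Fields's loop-integral representation, while you apply Cauchy estimates on discs of radius $x/2$ inside the right half-plane and then Fa\`a di Bruno; this is a clean and fully rigorous substitute.
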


\begin{proof}

The first assertion \eqref{eq:Gamma-phase} follows from the reflection formula for the gamma function. To arrive at \eqref{eq:m-symbol}, we use Fields's asymptotic expansion from \cite{Fields} (see also \cite{Frenzen} and 
\cite[Sec.~5.11(i)--(iii)]{DLMF}). The case $j=0$ is then immediate. Since Fields's expansion relies on a certain loop integral (see \cite[p. 43]{Fields}), we may differentiate it and thus obtain \eqref{eq:m-symbol} for all   $j>0$.
\end{proof}

The real Hilbert space
\begin{equation}\label{eq:Xalpha}
 \mathcal X_\alpha=
 \left\{f\in\PW^2:
 f\text{ is real and even on }\R,
 \ \norm{f}_\alpha^2\coloneqq 
 \int_\R(1+x^2)^{\alpha}|{f(x)}|^2\dd x<\infty
 \right\}.
\end{equation}
will be central in the proof of Lemma~\ref{prop:Gamma-correction}. A key point is that functions in $\mathcal X_\alpha$ decay faster than $E_p$. On the other hand, it is clear that $\mathcal X_{\alpha}$ is continuously embedded in $PW^p$.


\begin{lemma}\label{lem:weighted-correction-estimates}
For every integer \(j\ge0\),
\begin{equation}\label{eq:weighted-Bernstein}
 \norm{f^{(j)}}_\alpha\le C_{j,p}\norm{f}_\alpha,
 \qquad f\in\mathcal X_\alpha.
\end{equation}
In addition,
\begin{equation}\label{eq:pointwiseH}
 |f(x)|
 \le C_p(1+x^2)^{-\alpha/2}\norm{f}_\alpha.
\end{equation}
\end{lemma}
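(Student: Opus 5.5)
The plan is to prove the two estimates of Lemma~\ref{lem:weighted-correction-estimates} separately: the first by a commutator argument on the Fourier side, the second by a local mean-value inequality combined with a Bernstein-type bound. Throughout, $f\in\mathcal X_\alpha\subset\PW^2$, so $\widehat f$ is supported in $[-\pi,\pi]$. The hypothesis $\norm{f}_\alpha<\infty$ says that $(1+x^2)^{\alpha/2}f\in L^2$. When $\alpha$ is not an integer, this cannot be written as a finite sum of $x^kf$, so I will not work with a Sobolev norm of $\widehat f$ directly. Instead I use a multiplier argument that compares $f^{(j)}$ with $f$ weighted by $w(x)\coloneqq(1+x^2)^{\alpha/2}$.

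\textbf{Step 1: $j=1$.} It suffices to treat $j=1$ and then iterate, since derivatives of $f\in\mathcal X_\alpha$ are again in $\PW^2$, real, and of alternating parity. (Parity plays no role in the estimates.) Let $\chi$ be a Schwartz function with $\widehat\chi\equiv 1$ on $[-\pi,\pi]$ and $\widehat\chi$ compactly supported, say in $[-2\pi,2\pi]$. Then $f=f*\chi$ and $f'=f*\chi'$. Write
\[
 w(x)f'(x)=\int f(y)w(y)\chi'(x-y)\dd y+\int f(y)\big(w(x)-w(y)\big)\chi'(x-y)\dd y .
\]
The first term has $L^2$ norm at most $\norm{\chi'}_{L^1}\norm{wf}_{L^2}$ by Young's inequality. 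For the second term I use Peetre's inequality
\[
 |w(x)-w(y)|\le C_\alpha (1+|x-y|)^{\alpha}\,w(y),
\]
which gives the bound $C\norm{(1+|\cdot|)^\alpha\chi'}_{L^1}\norm{wf}_{L^2}$. This norm is finite because $\chi'$ is Schwartz. Together these give $\norm{f'}_\alpha\le C_p\norm{f}_\alpha$. Iterating $j$ times yields \eqref{eq:weighted-Bernstein}.

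\textbf{Step 2: the pointwise bound.} Set $g=wf$. Rather than estimating $g$ directly, I use the same representation $f=f*\chi$ to get
\[
 w(x)|f(x)|\le\int |f(y)|w(y)\,C(1+|x-y|)^\alpha|\chi(x-y)|\dd y\le C\norm{wf}_{L^2}\,\norm{(1+|\cdot|)^\alpha\chi}_{L^2},
\]
by Cauchy--Schwarz and Peetre's inequality. This is exactly \eqref{eq:pointwiseH}. An alternative route would be $\sup|g|^2\le\norm{g}_{L^2}\norm{g'}_{L^2}$ combined with Step 1 and $|w'|\le Cw$, but the convolution route is shorter.

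The only genuinely delicate point is justifying the reproducing identity $f=f*\chi$ together with the weighted Young estimate when $\alpha$ may exceed any integer. Since $\alpha=2/p<2$, this is harmless. Peetre's inequality handles all real $\alpha\ge0$, and all constants depend only on $\alpha$, hence only on $p$, and on $j$.
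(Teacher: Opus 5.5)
Your proof is correct, and it takes a different route from the paper's. The paper first invokes a weighted Plancherel--P\'olya inequality \eqref{eq:PP}, which bounds $\int_{-1}^1\int_\R(1+|x+\ii y|^2)^\alpha|f(x+\ii y)|^2\dd x\dd y$ by $C\norm{f}_\alpha^2$. It then gets both estimates by complex-variable means: Cauchy estimates on unit discs for $f^{(j)}$, and subharmonicity of $|f|^2$ on the unit disc about $x$ for the pointwise bound.

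You stay on the real line. You use the band-limited reproducing identities $f=f*\chi$ and $f^{(j)}=f*\chi^{(j)}$ together with Peetre's inequality $w(x)\le C_\alpha(1+|x-y|)^\alpha w(y)$. Everything then reduces to Young's inequality and Cauchy--Schwarz. Your splitting $w(x)=w(y)+(w(x)-w(y))$ is not needed, since Peetre's inequality bounds $w(x)$ directly. Your closing remark about $\alpha<2$ is also superfluous: the reproducing identity is a pure Fourier-support fact, and Peetre's inequality holds for every $\alpha\ge0$.

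Your argument is self-contained and uses only the spectral support of $f$. The paper, by contrast, cites a weighted ``variant'' of Plancherel--P\'olya without proving it. What the paper's route buys is the strip inequality \eqref{eq:PP} itself, which it reuses later in Lemmas~\ref{lem:tail-dominance} and~\ref{lem:Gamma-dominance}, in the finiteness of $\mathcal I$ in Lemma~\ref{prop:Euler-solution}, and in the zero localization of Lemma~\ref{cor:l2-localization}.

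Your method recovers \eqref{eq:PP} as well. Since $\widehat\chi$ has compact support, $\chi$ extends to an entire function, and $\chi(\cdot+\ii y)$ is Schwartz uniformly for $|y|\le1$. The identity $f(z)=\int_\R f(t)\chi(z-t)\dd t$ persists off the real axis, and $(1+|x+\ii y|^2)^\alpha\asymp(1+x^2)^\alpha$ on the strip. So the same Peetre--Young estimate on each horizontal line, integrated over $|y|\le1$, gives the strip bound.
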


\begin{proof}
By a variant of a classical inequality of Plancherel and Pólya (see e.g. \cite[Section 2.3]{Young}),
\begin{equation} \label{eq:PP}
    \int_{-1}^1 \int_{-\infty}^\infty (1+|x+\ii y|^2)^{\alpha}|f(x+\ii y)|^2 \dd x \dd y \le C \| f\|_\alpha^2.\end{equation}
The first assertion follows by estimating $|f^{(j)}(x)|$ in terms of Cauchy estimates and then applying \eqref{eq:PP}. We get the second assertion by using subharmonicity to bound ${|f(x)|^2}$ by an area integral of $|f|^2$ in a disc of radius $1$ centered at $x$ and then invoking again \eqref{eq:PP}.
\end{proof}

We also need the following version of the elementary fact that no nonzero function can concentrate in a fixed compact interval.

\begin{lemma}\label{lem:tail-dominance}
For every \(T>1\), there is a constant \(C_{p,T}>0\) such that 
\begin{equation}\label{eq:tail-dominance}
 \norm{f}_\alpha^2
 \le C_{p,T}\int_{|x|>T}|x|^{2\alpha}|f(x)|^2\dd x,
 \qquad f\in\mathcal X_\alpha.
\end{equation}
\end{lemma}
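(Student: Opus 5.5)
We argue by contradiction, combining compactness in $\PW^2$ with the fact that a nonzero entire function cannot vanish on an interval. The reduction is as follows. Since $(1+x^2)^\alpha\le C_{p,T}$ on $|x|\le T$ and $(1+x^2)^\alpha\le 2^\alpha|x|^{2\alpha}$ for $|x|>T>1$, it suffices to show
\[
 \int_{|x|\le T}|f(x)|^2\dd x\le C_{p,T}\int_{|x|>T}|x|^{2\alpha}|f(x)|^2\dd x ,\qquad f\in\mathcal X_\alpha .
\]

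Suppose this fails. Then there are $f_k\in\mathcal X_\alpha$ with $\norm{f_k}_\alpha=1$ and $\int_{|x|>T}|x|^{2\alpha}|f_k|^2\dd x\to0$. Their $\PW^2$ norms are bounded by $1$, so after passing to a subsequence $f_k\to f$ weakly in $\PW^2$.

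Next we upgrade to locally uniform convergence. Point evaluation at $z$ is given by pairing with the reproducing kernel $\sinc(\pi(\cdot-\bar z))$, which lies in $\PW^2$. Hence weak convergence gives pointwise convergence on $\mathbb C$. The bound $|f_k(z)|\le C e^{\pi|\operatorname{Im} z|}$ then upgrades this to locally uniform convergence by Montel's theorem. (Alternatively, Lemma~\ref{lem:weighted-correction-estimates} gives equicontinuity on $\R$ directly.)

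We now show the weighted norm of the limit cannot escape to infinity. The inner part satisfies
\[
 \int_{|x|\le T}(1+x^2)^\alpha|f_k|^2\dd x\;\ge\; 1-C\int_{|x|>T}|x|^{2\alpha}|f_k|^2\dd x\;\to\;1 .
\]
Uniform convergence on $[-T,T]$ then yields $\int_{|x|\le T}(1+x^2)^\alpha|f|^2\dd x=1$, so $f\neq0$.

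Finally we show $f$ vanishes off $[-T,T]$. By Fatou's lemma, or by weak lower semicontinuity on the region $|x|>T$,
\[
 \int_{|x|>T}|x|^{2\alpha}|f|^2\dd x\le\liminf_{k\to\infty}\int_{|x|>T}|x|^{2\alpha}|f_k|^2\dd x=0 .
\]
So $f\equiv0$ on $|x|>T$. Since $f$ is entire, $f\equiv0$, which contradicts the previous step.

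The only real point is the passage from weak convergence to local uniform convergence, which is exactly what keeps the mass in $[-T,T]$ from vanishing in the limit. This is standard via the reproducing kernel, so I expect no serious obstacle. The constant $C_{p,T}$ obtained this way is non-explicit, which is fine for the statement.
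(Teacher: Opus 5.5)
Your proposal is correct and follows the paper's argument: normalize a counterexample sequence so that the weighted tails vanish, extract a weak limit, upgrade to locally uniform convergence, use the identity theorem to force the limit to vanish, and contradict the mass that must remain on $[-T,T]$. The only cosmetic difference is that you take the weak limit in $\PW^2$ and get local uniform convergence from the reproducing kernel and Montel's theorem, whereas the paper works in $\mathcal X_\alpha$ and uses the Plancherel--P\'olya bound \eqref{eq:PP}.
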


\begin{proof}
{If the assertion failed, there would be $f_j\in\mathcal X_\alpha$
with $\|f_j\|_\alpha=1$ and weighted tail integrals tending to zero.
Pass to a weakly convergent subsequence. By \eqref{eq:PP} and local
compactness it converges locally uniformly to an entire function $f$.
The vanishing tails force $f=0$ on $(T,T+1)$, hence $f\equiv0$.
The norm on $[-T,T]$ therefore tends to zero, as does the remaining
part since $(1+x^2)^\alpha\asymp_{p,T}|x|^{2\alpha}$ for $|x|>T$.
This contradicts $\|f_j\|_\alpha=1$.}
\end{proof}

\subsection{The Fourier transform of \texorpdfstring{$x^2 |E_p(x)|^{p-2} E_p(x)$}{x squared times the Gamma power}}
\label{sec:defect}
We will now analyze the Fourier transform of the function $x^2 |E_p(x)|^{p-2} E_p(x)$. Here the point is that, by Plancherel's identity, the orthogonality relation \eqref{eq:Lportho} asserts that the Fourier transform of $x^2 |\varphi_p(x)|^{p-2}\varphi(x)$ vanishes on $(-\pi,\pi)$. With reference to our ansatz about $E_p$, we should expect the Fourier transform of   $x^2 |E_p(x)|^{p-2} E_p(x)$ to behave reasonably well on $(-\pi,\pi)$. 

We begin with a computation involving the gamma function. {Fix $\chi\in C^\infty([0,\infty))$ with $\chi=0$ on $[0,1]$ and $\chi=1$ on $[2,\infty)$.}
\begin{lemma}\label{lem:Abel-estimates}
 If \(s>-1\), then there is a function \(S\), smooth near \(\lambda=0\), such that for \(\lambda>0\),
\begin{equation}\label{eq:Abel-monomial}
 \lim_{\varepsilon\downarrow0}
 \int_0^\infty\chi(t)t^s e^{-\varepsilon t}e^{i\lambda t}\dd t
 =\Gamma(s+1)e^{i\pi(s+1)/2}\lambda^{-s-1}+S(\lambda).
\end{equation}
\end{lemma}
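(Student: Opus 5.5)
Write $\chi = 1-(1-\chi)$ and split the integral as
\[
\int_0^\infty t^s e^{-\varepsilon t}e^{i\lambda t}\dd t - \int_0^2 (1-\chi(t))t^s e^{-\varepsilon t}e^{i\lambda t}\dd t .
\]
Since $s>-1$, both pieces converge absolutely for every $\varepsilon>0$. The second piece is the integral of an integrable function supported in $[0,2]$. Its limit as $\varepsilon\downarrow0$ is therefore
\[
-\int_0^2(1-\chi(t))t^s e^{i\lambda t}\dd t ,
\]
by dominated convergence. This is an entire function of $\lambda$, since differentiation under the integral is allowed: the integrand is compactly supported and $t^s$ is integrable near $0$. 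We take $S(\lambda)$ to be this function, which is in particular smooth near $\lambda=0$.

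For the first piece, substitute $w=\varepsilon-i\lambda$, which has $\operatorname{Re}w>0$. The standard formula
\[
\int_0^\infty t^s e^{-wt}\dd t=\Gamma(s+1)w^{-s-1}
\]
holds with the principal branch. One justifies it first for real $w>0$ by scaling, and then for $\operatorname{Re} w>0$ by analytic continuation in $w$, since both sides are holomorphic in the right half-plane. Letting $\varepsilon\downarrow0$ with $\lambda>0$ fixed, we have $w\to -i\lambda=\lambda e^{-i\pi/2}$. The principal branch is continuous there because $-i\lambda$ is away from the negative real axis. Hence
\[
w^{-s-1}\to \lambda^{-s-1}e^{i\pi(s+1)/2},
\]
which gives the main term $\Gamma(s+1)e^{i\pi(s+1)/2}\lambda^{-s-1}$.

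The only point requiring care is the branch and the continuation of the Gamma integral to complex $w$. Everything else is routine dominated convergence, and this step is not a real obstacle.
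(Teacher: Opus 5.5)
Your proposal is correct and is essentially the paper's argument: the paper also derives the lemma directly from the identity $\int_0^\infty t^s e^{-(\varepsilon-i\lambda)t}\,dt=\Gamma(s+1)(\varepsilon-i\lambda)^{-s-1}$. You additionally make explicit two details the paper leaves implicit: subtracting the compactly supported piece $(1-\chi)t^s$, which produces the entire remainder $S$, and evaluating the principal branch in the limit $\varepsilon\downarrow 0$.
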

\begin{proof}
The result is immediate from the fact that for \(\varepsilon>0\),
\[
 \int_0^\infty t^s e^{-(\varepsilon-i\lambda)t}\dd t
 =\Gamma(s+1)(\varepsilon-i\lambda)^{-s-1}. \qedhere
\]
\end{proof}
We will also need the following computation.
\begin{lemma}\label{lem:oddn}
Let \(-2<s<0\){, $s\ne-1$,} and suppose that \(a\) in $C^1([1,\infty))$ satisfies $|a(t)|\le C_0 t^{s}$ and \(|a'(t)|\le C_1 t^{s-1}\).  Then
\begin{equation}\label{eq:symbol-bound}
 \left|\lim_{\varepsilon\downarrow0}
 \int_1^\infty a(t)e^{-\varepsilon t}e^{i\lambda t}\dd t
 \right|
 \le C\{1+\lambda^{-s-1}\}.
\end{equation}
for \(0<\lambda\le1\),
\end{lemma}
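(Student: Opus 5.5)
The plan is to split the integral at the natural scale $t\sim 1/\lambda$, where the oscillation of $e^{i\lambda t}$ becomes effective. For $0<\lambda\le 1$, write
\[
\int_1^\infty a(t)e^{-\varepsilon t}e^{i\lambda t}\dd t=\int_1^{1/\lambda}+\int_{1/\lambda}^\infty=:I_1(\varepsilon)+I_2(\varepsilon).
\]

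The near part is bounded directly by the size of $a$. Using $|a(t)|\le C_0t^s$, we get
\[
|I_1(\varepsilon)|\le C_0\int_1^{1/\lambda}t^s\dd t .
\]
\begin{itemize}
\item If $-1<s<0$, this is at most $C_0(1+s)^{-1}\lambda^{-s-1}$.
\item If $-2<s<-1$, it is at most $C_0(-1-s)^{-1}$.
\end{itemize}
In either case the bound is $\le C\{1+\lambda^{-s-1}\}$, uniformly in $\varepsilon$. The exclusion $s\ne-1$ is exactly what avoids the logarithm at this step.

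The far part is handled by one integration by parts against $e^{(i\lambda-\varepsilon)t}$, whose antiderivative is $e^{(i\lambda-\varepsilon)t}/(i\lambda-\varepsilon)$. Since $a(t)\to0$ and $e^{-\varepsilon t}$ decays, the boundary term at infinity vanishes. This gives
\[
I_2(\varepsilon)=-\frac{a(1/\lambda)e^{(i\lambda-\varepsilon)/\lambda}}{i\lambda-\varepsilon}-\frac1{i\lambda-\varepsilon}\int_{1/\lambda}^\infty \bigl(a'(t)-\varepsilon a(t)\bigr)e^{(i\lambda-\varepsilon)t}\dd t .
\]
We use $|i\lambda-\varepsilon|\ge\lambda$ and treat the three resulting terms separately.
\begin{itemize}
\item The boundary term is at most $C_0\lambda^{-s}/\lambda=C_0\lambda^{-s-1}$.
\item The $a'$ term is at most $\lambda^{-1}C_1\int_{1/\lambda}^\infty t^{s-1}\dd t=C_1|s|^{-1}\lambda^{-s-1}$, which is finite because $s<0$.
\item The $\varepsilon a$ term is the only delicate point, since $t^s$ need not be integrable when $s>-1$. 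We bound it by $\frac{\varepsilon}{|i\lambda-\varepsilon|}\int_{1/\lambda}^\infty C_0t^se^{-\varepsilon t}\dd t$. For $t\ge1/\lambda$ we have $t^s\le\lambda^{-s}$, and $\varepsilon\int e^{-\varepsilon t}\dd t\le1$, so the term is at most $C_0\lambda^{-s}/\lambda=C_0\lambda^{-s-1}$.
\end{itemize}

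Altogether, $|I_1(\varepsilon)|+|I_2(\varepsilon)|\le C\{1+\lambda^{-s-1}\}$ with $C$ depending only on $C_0,C_1,s$ and independent of $\varepsilon$. It remains to note that the limit exists. For $s<-1$, $a\in L^1$ and dominated convergence applies. For $-1<s<0$, the integration by parts formula above, with the split point $1$ instead of $1/\lambda$, expresses the integral through $a'\in L^1$ and an $\varepsilon a$ term that tends to zero. Indeed, $\varepsilon\int_1^\infty t^se^{-\varepsilon t}\dd t\le\varepsilon^{-s}\Gamma(s+1)\to0$. The bound then passes to the limit.

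The only step needing care is this $\varepsilon a$ term and the existence of the limit; everything else is routine.
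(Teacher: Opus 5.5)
Your proof follows the same route as the paper's: split at $t=\lambda^{-1}$, bound $\int_1^{1/\lambda}$ trivially (this is where $s\neq-1$ is used), and integrate by parts once on $[\lambda^{-1},\infty)$ using $|i\lambda-\varepsilon|\ge\lambda$.

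There is one slip, in your displayed identity for $I_2(\varepsilon)$. If the antiderivative is $e^{(i\lambda-\varepsilon)t}/(i\lambda-\varepsilon)$, the factor being differentiated is $a(t)$ alone. The integrand is then just $a'(t)e^{(i\lambda-\varepsilon)t}$, with no $-\varepsilon a(t)$ term. As written, your right-hand side equals $\frac{i\lambda}{i\lambda-\varepsilon}\,I_2(\varepsilon)$, not $I_2(\varepsilon)$.

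An $\varepsilon a$ term appears only if you integrate $e^{i\lambda t}$ by itself. In that case the denominator must be $i\lambda$, and your three estimates remain valid for that version.

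The paper uses the first version, so the step you call ``delicate'' never arises. The bound $\lambda^{-1}\bigl(|a(\lambda^{-1})|+\int_{\lambda^{-1}}^\infty|a'|\bigr)$ holds uniformly in $\varepsilon$.

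The same corrected identity with split point $1$ also gives existence of the limit directly. Since $|a'(t)|\le C_1t^{s-1}$ with $s<0$, we have $a'\in L^1([1,\infty))$, so dominated convergence applies. No case split in $s$ is needed for this. The paper leaves existence of the limit implicit, so your attention to it is a useful addition.
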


\begin{proof}
We begin by splitting the integral
at $\lambda^{-1}$. The integral from $1$ to $\lambda^{-1}$ is estimated trivially and is bounded by a constant times $1+\lambda^{-s-1}$. By integration by parts and using our assumptions on $a$, we also get
\[
 \left|\int_{\lambda^{-1}}^\infty a(t)e^{-(\varepsilon-i\lambda)t}\dd t\right|
 \le \frac{|a(\lambda^{-1})|+\int_{\lambda^{-1}}^\infty|a'(t)|\dd t}{\lambda}
 \le C \lambda^{-s-1},
\]
uniformly in $\varepsilon$.  
\end{proof}

To deal with the contribution from the sine factor in \eqref{eq:Gamma-phase} to
the Fourier transform of $x^2|E_p(x)|^{p-2} E_p(x)$, we introduce the absolutely convergent Fourier series
\begin{equation}\label{eq:cusp-series}
 F_p(\theta):=|\cos\theta|^{p-2}\cos\theta
 =\sum_{\substack{n\ge1\\ n\ \mathrm{odd}}}\beta_n(p)\cos(n\theta),
 \qquad \beta_n(p)=O_p(n^{-p}).
\end{equation}
To see the validity of \eqref{eq:cusp-series}, we observe that, for odd $n$, the beta integral gives
\[
 \beta_n(p)=
 \frac{2^{2-p}\Gamma(p)}
 {\Gamma((p+1+n)/2)\Gamma((p+1-n)/2)},
\]
{Reflection and Stirling's formula give $\beta_n(p)=O_p(n^{-p})$, and hence absolute convergence in \eqref{eq:cusp-series}.}

To simplify the writing, we will from now on use the notation
\[ \Phi_p(t)\coloneqq \frac{|t|^p}{p}, \qquad N_p(t)\coloneqq \Phi_p'(t)=|t|^{p-2}t. \]
We may now state the main result of this section.
\begin{lemma}\label{prop:Gamma-defect}
The distribution
\[
 \Delta_p:=\widehat{x^2N_p(E_p)}
\]
is represented on \((-\pi,\pi)\) by a locally integrable function, still denoted \(\Delta_p\), satisfying
\begin{equation}\label{eq:defect-bound}
 |\Delta_p(\xi)|
 \le C_p\left\{1+(\pi-\xi)^{1-\alpha}
 +(\pi+\xi)^{1-\alpha}\right\}.
\end{equation}
Moreover,
\begin{equation}\label{eq:ell0}
 \ell_0(f)=\frac1{2\pi}
 \int_{-\pi}^{\pi}\Delta_p(\xi)\widehat f(\xi)\dd\xi
\end{equation}
defines a bounded functional on \(\mathcal X_\alpha\).
\end{lemma}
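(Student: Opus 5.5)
The plan is to compute $x^2N_p(E_p(x))$ explicitly for large $|x|$ using Lemma~\ref{lem:Gamma-properties}, expand the oscillating factor in the Fourier series \eqref{eq:cusp-series}, and then evaluate the Fourier transform term by term with Lemmas~\ref{lem:Abel-estimates} and~\ref{lem:oddn}. The case $p=2$ is trivial (then $E_2=\sinc(\pi x)$ and $x^2E_2$ has transform supported at $\pm\pi$), so we assume $p\ne2$, i.e.\ $0<\alpha<2$, $\alpha\neq1$.

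\textbf{Reduction.} Since $x^2N_p(E_p)$ is even and smooth, we split it as $(1-\chi(|x|))x^2N_p(E_p)+\chi(|x|)x^2N_p(E_p)$. The first piece has compact support, so its Fourier transform is smooth and harmless. For $x\ge2$, \eqref{eq:Gamma-phase} gives $E_p(x)=c_p\,x^{-\alpha}m_p(x)\sin\pi(x-\gamma)$. Since $N_p$ is homogeneous of degree $p-1$ and $\alpha(p-1)=2-\alpha$, we get
\[
x^2N_p(E_p(x))=c_p^{p-1}\,x^{\alpha}m_p(x)^{p-1}\,F_p\bigl(\pi(x-\gamma-\tfrac12)\bigr)
=c_p^{p-1}\sum_{n\ \mathrm{odd}}\beta_n(p)\,x^\alpha m_p(x)^{p-1}\cos\bigl(n\pi(x-\gamma-\tfrac12)\bigr).
\]
Write $x^\alpha m_p^{p-1}=x^\alpha+a(x)$. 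By \eqref{eq:m-symbol}, $a(x)=O(x^{\alpha-2})$ and $a'(x)=O(x^{\alpha-3})$. So $a$ satisfies the hypotheses of Lemma~\ref{lem:oddn} with $s=\alpha-2\in(-2,0)$, $s\ne-1$.

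\textbf{Term-by-term transforms.} Evenness reduces the transform at $\xi\in(-\pi,\pi)$ to one-sided integrals $\int_0^\infty\chi(t)(\cdot)e^{-\varepsilon t}e^{i\lambda t}\,dt$ with $\lambda=n\pi\pm\xi>0$.
\begin{itemize}
\item For $n\ge3$ we have $\lambda\ge2\pi$. Lemma~\ref{lem:Abel-estimates} and integration by parts bound the contribution by $C(1+\lambda^{-\alpha-1})$, uniformly in $\xi$. Since $\beta_n=O(n^{-p})$, these terms sum to a bounded function on $(-\pi,\pi)$.
\item For $n=1$, the remainder $a$ contributes at most $C\{1+(\pi\mp\xi)^{1-\alpha}\}$ by Lemma~\ref{lem:oddn}, since $-s-1=1-\alpha$. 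This is exactly the bound \eqref{eq:defect-bound}.
\item The monomial $x^\alpha$ with $n=1$ contributes, by Lemma~\ref{lem:Abel-estimates}, a multiple of $\lambda^{-\alpha-1}$ plus a smooth function. This is too singular.
\end{itemize}

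\textbf{Main obstacle: the $\lambda^{-\alpha-1}$ cancellation.} I expect this to be the key step. Collect the coefficients of $(\pi-\xi)^{-\alpha-1}$ coming from the two half-lines $x>0$ and $x<0$ and from the two exponentials in $\cos(\pi(x-\gamma-\tfrac12))$. Using the phases $e^{\pm i\pi(\alpha+1)/2}$ from Lemma~\ref{lem:Abel-estimates} together with $e^{\pm i\pi(\gamma+\frac12)}$, the coefficient should reduce to a multiple of
\[
\cos\Bigl(\tfrac{\pi(\alpha+1)}2-\pi(\gamma+\tfrac12)\Bigr)=\cos\tfrac{\pi}{2}=0,
\qquad\text{since }\gamma+\tfrac12=\tfrac{\alpha}{2}.
\]
So the shift $\gamma$ in the ansatz is precisely what kills the leading singularity. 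I would first check this on the model $E_p$ itself, i.e.\ for $p=2$ and via Ramanujan's formula, to fix the signs. Justifying the interchange of the series with the Abel limit uses the absolute convergence of \eqref{eq:cusp-series} and the uniform bounds above. This shows that $\Delta_p$ is represented on $(-\pi,\pi)$ by a function satisfying \eqref{eq:defect-bound}, which is locally integrable because $1-\alpha>-1$.

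\textbf{Boundedness of $\ell_0$.} For $f\in\mathcal X_\alpha$, $\widehat f$ is supported in $[-\pi,\pi]$ and lies in $H^\alpha$ with norm comparable to $\norm{f}_\alpha$. Let $g=\Delta_p\mathbf 1_{[-\pi,\pi]}$; it suffices to show $g\in H^{-\alpha}$.
\begin{itemize}
\item The endpoint singularities $(\pi\mp\xi)^{1-\alpha}$ give inverse Fourier transforms that are $O(|x|^{\alpha-2})$.
\item The truncation of the bounded part gives $O(|x|^{-1})$ decay, obtained by one integration by parts, with $\Delta_p$ differentiable in the interior.
\end{itemize}
Both $|x|^{\alpha-2}(1+x^2)^{-\alpha/2}$ and $|x|^{-1}(1+x^2)^{-\alpha/2}$ are in $L^2$ at infinity. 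Hence $\check g(1+x^2)^{-\alpha/2}\in L^2$, and Cauchy--Schwarz gives $|\ell_0(f)|\le C_p\norm{f}_\alpha$.
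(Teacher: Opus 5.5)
\textbf{Derivation of the bound.} Your derivation of \eqref{eq:defect-bound} follows the paper's proof step for step: the cutoff, insertion of \eqref{eq:cusp-series}, uniform boundedness of the terms with odd $n\ge3$, Lemma~\ref{lem:oddn} with $s=\alpha-2$ for $a=x^\alpha(m_p^{p-1}-1)$, and the vanishing of the $\lambda^{-\alpha-1}$ coefficient. Your phase bookkeeping is right. Up to the factor $c_p^{p-1}\beta_1(p)$, the singular part near $\xi=\pi$ is $\Gamma(\alpha+1)\cos\bigl(\tfrac{\pi(\alpha+1)}2-\tfrac\pi p\bigr)(\pi-\xi)^{-\alpha-1}$, and the cosine equals $\cos\tfrac\pi2=0$.

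Two small points. To interchange the $n$-sum with the Abel limit, you need bounds on the regularized integrals that are uniform in $0<\varepsilon\le1$, not only bounds on their limits. The paper gets these from $q>\alpha+1$ integrations by parts, and the same works for you. Also, Lemma~\ref{lem:Abel-estimates} only says $S$ is smooth near $0$, so boundedness of $S$ for $\lambda\ge2\pi$ needs a one-line remark.

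\textbf{Boundedness of $\ell_0$.} This is where the argument fails as written. You claim that $\check g$ decays like $|x|^{\alpha-2}+|x|^{-1}$. To get this you split $\Delta_p$ into pure endpoint powers $(\pi\mp\xi)^{1-\alpha}$ plus a bounded part, and integrate the bounded part by parts once. But \eqref{eq:defect-bound} is only a pointwise size bound. It does not give an exact decomposition
\[
\Delta_p=c_+(\pi-\xi)^{1-\alpha}+c_-(\pi+\xi)^{1-\alpha}+b .
\]
Moreover, one integration by parts over $[-\pi,\pi]$ with only $O(|x|^{-1})$ loss requires $b'\in L^1(-\pi,\pi)$; interior differentiability does not give this. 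Producing such a decomposition would require extracting the next term in the expansion of $a$ and proving derivative bounds for the $n\ge3$ series, which you have not done.

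None of this is needed, even inside your own duality framework; the size bound suffices.
\begin{itemize}
\item If $\alpha<1$, then $g\in L^\infty(-\pi,\pi)\subset L^2$, so $\check g\in L^2$ and $|\ell_0(f)|\le C\|f\|_2\le C\|f\|_\alpha$.
\item If $\alpha>1$, then $g\in L^1$, so $\check g\in L^\infty$. Since $(1+x^2)^{-\alpha/2}\in L^2$, Cauchy--Schwarz gives $|\ell_0(f)|\le C\|g\|_{L^1}\|f\|_{L^1}\le C\|f\|_\alpha$.
\end{itemize}
This is exactly the paper's argument.
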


\begin{proof}
We may assume that \(p\ne2\) since trivially $\Delta_2|_{(-\pi,\pi)} =0$. By Lemma~\ref{lem:Gamma-properties}, for large positive \(x\),
\begin{equation}\label{eq:defect-tail-exact}
 x^2N_p(E_p(x))
 =D_p x^\alpha m_p(x)^{p-1}
 F_p\left(\pi x-\frac\pi p\right)
\end{equation}
with $D_p$ a constant. {We insert the cutoff $\chi(x)$ in the tail integrals below; the omitted compact part has a bounded Fourier transform.}
We insert \eqref{eq:cusp-series} into \eqref{eq:defect-tail-exact}, and our task is then to compute the Fourier transform of terms of the form $x^\alpha m_p(x)^{p-1}\cos(n\pi(x-\frac1p))$. Beginning with odd $n\ge 3$, we set
\[
 I_{n,\pm,\varepsilon}(\xi)
 \coloneqq \int_0^\infty
 {\chi(x)}x^\alpha m_p(x)^{p-1}e^{-\varepsilon x}
 e^{i(\pm n\pi-\xi)x}\dd x.
\]
Let $q$ be an integer larger than $\alpha+1$. Since $|\pm n\pi - \xi|\ge(n-1)\pi$ when $|\xi|\le \pi$, $q$
integrations by parts on $[1,\infty]$ along with \eqref{eq:m-symbol} give
\[
 \sup_{0<\varepsilon\le1}\sup_{|\xi|\le\pi}
 |I_{n,\pm,\varepsilon}(\xi)|
 \le C_{q,p}.
\]
Taking into account the uniform bounds on $\beta_n(p)$ from \eqref{eq:cusp-series}, we conclude that the terms with odd $n\ge 3$ contribute at most a bounded term to $\Delta_p(\xi)$ when ${|\xi|\le\pi}$.

It remains to consider the contribution from the term with $n=1$.  Write \(m_p^{p-1}=1+s_p\).  By \eqref{eq:m-symbol},
\[
 |s_p^{(j)}(x)|\le C_{j,p}x^{-2-j}.
\]
Thus Lemma~\ref{lem:oddn} applies with $s=\alpha -2$ and so the term with $s_p$ yields a term bounded by a constant times the right-hand side of \eqref{eq:defect-bound}. Finally, to deal with the leading term when $n=1$, we see from {Lemma~\ref{lem:Abel-estimates}} with $s=\alpha$ that we may potentially have two singular terms behaving as $(\pi\mp\xi)^{-\alpha-1}$  near the respective endpoints $\xi=\pm \pi$. However, the coeffients of these terms will, according to \eqref{eq:Abel-monomial}, be 
 proportional to
\[
 \cos\left(\frac{\pi(\alpha+1)}2-\frac\pi p\right)
 =\cos\frac\pi2=0,
\]
because $\alpha=2/p$. In total, we have therefore established \eqref{eq:defect-bound}.

The final assertion, namely that $\ell_0$ defines a bounded functional on $\mathcal X_{\alpha}$, now follows from \eqref{eq:defect-bound}. For $\alpha<1$, we see this by noting that $\Delta_p$ is in $L^\infty$, and so its inverse Fourier transform is in $PW^2$. Hence the result follows from Plancherel's identity. For
$\alpha>1$, we use that $\Delta_p$ is in $L^1$ which implies that its inverse Fourier transform is bounded. Then applying Plancherel's identity again, we obtain the desired result because $\| f \|_{L^1(\R)} \le C \| f\|_{\alpha}$ when $\alpha>1$.
\end{proof}

\subsection{Identification of \texorpdfstring{$\varphi_p$}{varphi	extunderscore p} via a minimization problem in \texorpdfstring{$\mathcal X_\alpha$}{X	extunderscore alpha}}

\label{sec:Euler}

For real $a,h$, set
\begin{equation}\label{eq:Bregman-definition}
 B_p(a,h)=\Phi_p(a+h)-\Phi_p(a)-N_p(a)h.
\end{equation}
 The purpose of this section is to prove Lemma~\ref{prop:Gamma-correction} by identifying $r_p=\varphi_p-K(p)E_p$ with a scalar times the unique minimizer in \(\mathcal X_\alpha\) of the functional  
\begin{equation}\label{eq:J-functional}
 \mathcal J(H)=\ell_0(H)+\int_\R x^2B_p(E_p(x),H(x))\dd x.
\end{equation}

We will need the following bounds.
\begin{lemma}\label{lem:Bregman} We have
\begin{align}
 B_p(a,h)&\ge c_p\frac{|h|^2}{(|a|+|h|)^{2-p}}, \quad  B_p(a,h)\le C_p h^2|a|^{p-2}, &&1<p<2,
 \label{eq:Bregman-sub-lower}\\
 B_p(a,h)&\ge c_p|a|^{p-2}|h|^2, \qquad B_p(a,h)\le C_p\{|a|^{p-2}|h|^2+|h|^p\} &&p>2,
 \label{eq:Bregman-super-lower}
\end{align}
\end{lemma}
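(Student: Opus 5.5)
We reduce to the Taylor remainder formula and a scaling argument. Since $\Phi_p$ is $C^1$ and convex with $\Phi_p''(t)=(p-1)|t|^{p-2}$ (integrable near $0$ for $p>1$), we have
\[
 B_p(a,h)=h^2\int_0^1(1-s)\,(p-1)\,|a+sh|^{p-2}\dd s .
\]
Both sides of each inequality are homogeneous of degree $p$ under $(a,h)\mapsto(\lambda a,\lambda h)$. By the symmetry $B_p(-a,-h)=B_p(a,h)$, we may therefore normalize $|a|+|h|=1$ (or treat $a=0$ separately, where $B_p(0,h)=\Phi_p(h)$ and everything is explicit). The whole proof then amounts to bounding $\int_0^1(1-s)|a+sh|^{p-2}\dd s$ from above and below.

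\emph{Case $1<p<2$.} Here $|a+sh|\le |a|+|h|$, so $|a+sh|^{p-2}\ge(|a|+|h|)^{p-2}$. This gives the lower bound with $c_p=(p-1)/2$.

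For the upper bound we must show $\int_0^1(1-s)|a+sh|^{p-2}\dd s\le C_p|a|^{p-2}$. If $|h|\le|a|/2$, then $|a+sh|\ge|a|/2$ and the bound is immediate. If $|h|>|a|/2$, the integrand may blow up at a zero $s_0=-a/h$. We treat this by the substitution $u=a+sh$, which gives
\[
 \int_0^1|a+sh|^{p-2}\dd s\le |h|^{-1}\int_{|u|\le |a|+|h|}|u|^{p-2}\dd u\le C_p|h|^{-1}(|a|+|h|)^{p-1}\le C_p'|h|^{p-2}.
\]
Since $|h|\gtrsim|a|$ and $p-2<0$, we get $|h|^{p-2}\lesssim|a|^{p-2}$. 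This is the main delicate step: the singularity of $|t|^{p-2}$ at the origin has to be integrated across rather than bounded pointwise, and integrability near $0$ is exactly where $p>1$ is used.

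\emph{Case $p>2$.} Now $|t|^{p-2}$ is increasing in $|t|$, so $|a+sh|^{p-2}\le 2^{p-2}(|a|^{p-2}+|h|^{p-2})$. This yields the upper bound $C_p\{|a|^{p-2}h^2+|h|^p\}$.

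For the lower bound, note that $|a+sh|\ge |a|/2$ whenever $s|h|\le|a|/2$. If $|h|\le|a|/2$ this holds for all $s\in[0,1]$, giving $\int\ge \tfrac12(|a|/2)^{p-2}$. If $|h|>|a|/2$, we instead use that the set of $s\in[0,1]$ with $|a+sh|\ge|a|/2$ has measure bounded below. Indeed, $a+sh$ is linear in $s$ with value $a$ at $s=0$, so it stays of size $\ge |a|/2$ on $[0,|a|/(2|h|)]$. The weight $(1-s)$ is $\ge 1/2$ on $[0,1/2]$, and $|a|/(2|h|)<1$.

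This last estimate seems to give only $\gtrsim |a|^{p-1}/|h|$, which is too small. Integrating the whole line segment instead, the quantity $\int_0^1(1-s)|a+sh|^{p-2}\dd s$ is a continuous positive function of $(a,h)$ on the compact normalized set $|a|+|h|=1$. It is positive because $|a+sh|$ vanishes at most at one point. Since this function is compared with $|a|^{p-2}\le 1$, compactness gives the lower bound $\ge c_p\ge c_p|a|^{p-2}$. Continuity is clear for $p>2$.

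I would actually use the compactness/homogeneity argument uniformly for all the lower bounds. The sole genuine computation is then the integrable-singularity upper bound in the range $1<p<2$.
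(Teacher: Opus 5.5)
Your proof is correct and follows the paper's route: the same Taylor-remainder formula $B_p(a,h)=(p-1)h^2\int_0^1(1-t)|a+th|^{p-2}\,\mathrm{d}t$, the bound $|a+th|\le|a|+|h|$ for the two easy estimates, and a normalization argument for the other two. The differences are cosmetic. The paper normalizes by $|a|$ and appeals to the finite maximum ($1<p<2$) and positive minimum ($p>2$) of $x\mapsto\int_0^1(1-t)|1+tx|^{p-2}\,\mathrm{d}t$, whereas you check the first explicitly via the substitution $u=a+sh$ and get the second by compactness on $|a|+|h|=1$, which even gives the slightly stronger $c_p h^2(|a|+|h|)^{p-2}$; you should delete the abandoned intermediate attempt.
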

\begin{proof}
These estimates follow from the formula
\begin{equation}\label{eq:Bpint}
 B_p(a,h)=(p-1)h^2\int_0^1(1-t)|a+th|^{p-2}\dd t.
\end{equation}
To arrive at the first bound in \eqref{eq:Bregman-sub-lower} and the second bound in \eqref{eq:Bregman-super-lower}, we use that $|a+th|\le |a|+|h|$. To deal with the remaining two cases,  we write
\[B_p(a,h)=(p-1)h^2 |a|^{p-2} \int_0^1(1-t)|1+th/a|^{p-2}\dd t \]
and note that the function $x\mapsto \int_0^1(1-t)|1+tx|^{p-2}\dd t$ has a finite maximum when $1<p<2$ and a positive minimum when $p>2$.
\end{proof}

For $p>2$, we will also use the following result.

\begin{lemma}\label{lem:Gamma-dominance}
If \(p>2\), then
\begin{equation}\label{eq:Gamma-dominance}
 \norm{H}_\alpha^2
 \le C_p\int_\R x^2|E_p(x)|^{p-2}|H(x)|^2\dd x,
 \qquad H\in\mathcal X_\alpha.
\end{equation}
\end{lemma}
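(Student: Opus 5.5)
Since $p>2$ we have $p-2>0$ and $\alpha=2/p<1$. By \eqref{eq:Gamma-phase}, for large $x$, $|E_p(x)|\asymp_p x^{-\alpha}|\sin\pi(x-\gamma)|$, because $m_p(x)\to 1$ by \eqref{eq:m-symbol}. Hence
\[
x^2|E_p(x)|^{p-2}\asymp_p x^{2-\alpha(p-2)}|\sin\pi(x-\gamma)|^{p-2}=x^{2\alpha}|\sin\pi(x-\gamma)|^{p-2},
\]
using $2-\alpha(p-2)=2-2+\tfrac4p=2\alpha$. The weight $x^{2\alpha}$ is exactly the one in Lemma~\ref{lem:tail-dominance}. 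The only difficulty is the factor $|\sin\pi(x-\gamma)|^{p-2}$, which vanishes at the points $x\in\gamma+\mathbb Z$, i.e.\ at the zeros $c_n$ of $E_p$.

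The plan is to show that the weighted integral over a tail $|x|>T$ dominates $\int_{|x|>T}|x|^{2\alpha}|H|^2$ up to a harmless error, and then to apply Lemma~\ref{lem:tail-dominance}. Fix $\delta\in(0,\tfrac14)$ and let $U_\delta=\{x:\dist(x,\gamma+\mathbb Z)\ge\delta\}$. On $U_\delta\cap\{|x|>T\}$, with $T$ large, the weight is $\ge c_{p,\delta}|x|^{2\alpha}$. It remains to control $H$ on the small intervals $I_n=[c_n-\delta,c_n+\delta]$ by its values on the neighbouring set $J_n=[c_n-\tfrac12,c_n+\tfrac12]\setminus I_n$. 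For this I would use a local estimate in the spirit of Lemma~\ref{lem:weighted-correction-estimates}. Write
\[
|H(x)|^2\le 2|H(y)|^2+2\Bigl(\int_{J_n\cup I_n}|H'|\Bigr)^2,
\]
average over $y\in J_n$, and integrate over $I_n$. This gives
\[
\int_{I_n}|H|^2\le C\delta\int_{J_n}|H|^2+C\delta\int_{|x-c_n|\le1/2}|H'|^2 .
\]
Multiplying by $|x|^{2\alpha}\asymp c_n^{2\alpha}$ and summing over $n$ with $c_n>T$, the first term is bounded by the weighted integral over $U_\delta$. The second term is bounded by $C\delta\norm{H'}_\alpha^2\le C_p\delta\norm{H}_\alpha^2$ by \eqref{eq:weighted-Bernstein}.

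Putting this together with Lemma~\ref{lem:tail-dominance} gives
\[
\norm{H}_\alpha^2\le C_{p,T}\Bigl(C_{p,\delta}\int_\R x^2|E_p|^{p-2}|H|^2\dd x+C_p\delta\norm{H}_\alpha^2\Bigr).
\]
The main obstacle is the order of constants: $C_{p,T}$ must not depend on $\delta$. This holds if $T$ is fixed first, large enough that the asymptotics of $m_p$ are valid for $x>T$, and $\delta$ is chosen afterwards, small enough that $C_{p,T}C_p\delta\le\tfrac12$. The $\delta$-term can then be absorbed into the left-hand side, which proves \eqref{eq:Gamma-dominance}. The finitely many zeros of $E_p$ with $|x|\le T$ never enter, since Lemma~\ref{lem:tail-dominance} only uses the region $|x|>T$. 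Evenness of $H$ and of $E_p$ lets us treat $x<-T$ in the same way as $x>T$.
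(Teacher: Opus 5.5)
Your proof is correct and follows essentially the same route as the paper. Both arguments apply Lemma~\ref{lem:tail-dominance} with a fixed $T$ and use \eqref{eq:Gamma-phase} to get $x^2|E_p|^{p-2}\ge c_{p,\delta}|x|^{2\alpha}$ away from $\delta$-neighbourhoods of the zeros; they then show those neighbourhoods carry at most $C_p\delta\norm{H}_\alpha^2$ and absorb that term, the only cosmetic difference being that you bound the neighbourhood contribution via the fundamental theorem of calculus and \eqref{eq:weighted-Bernstein}, whereas the paper uses subharmonicity together with \eqref{eq:PP}.
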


\begin{proof}
{Let $D_\varepsilon$ be the union of the intervals of radius
$\varepsilon<1/4$ about the zeros $\pm c_n$. Subharmonicity and
\eqref{eq:PP}, summed over unit intervals, give
\[
 \int_{D_\varepsilon}(1+x^2)^\alpha|H(x)|^2\dd x
 \le C_p\varepsilon\|H\|_\alpha^2.
\]
On $|x|>2$ outside $D_\varepsilon$, Lemma~\ref{lem:Gamma-properties}
gives $x^2|E_p(x)|^{p-2}\ge c_{p,\varepsilon}|x|^{2\alpha}$.
Apply Lemma~\ref{lem:tail-dominance} with $T=2$, split its tail into
these two sets, and choose $\varepsilon$ small enough to absorb
$C_p\varepsilon\|H\|_\alpha^2$.}
\end{proof}

Before stating the main result of this section, we note that, on $|x|\ge1$,
\begin{equation}\label{eq:Bregman-scaling}
 x^2B_p(E_p,H)=B_p(b,v),\qquad
 b\coloneqq {|x|^\alpha} E_p,\quad v\coloneqq {|x|^\alpha} H.
\end{equation}

\begin{lemma}\label{prop:Euler-solution}
For every $1<p<\infty$, $p\ne2$, the functional $\mathcal J$ has a unique minimizer $H_*$.  Moreover, for every $f$ in $\mathcal X_\alpha$,
\begin{equation}\label{eq:weak-correction}
 \ell_0(f)+\int_\R x^2
 \{N_p(E_p(x)+H_*(x))-N_p(E_p(x))\}f(x) \dd x=0.
\end{equation}
\end{lemma}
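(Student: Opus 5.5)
We use the direct method of the calculus of variations on the real Hilbert space $\mathcal X_\alpha$. The plan has four steps: show $\mathcal J$ is finite, convex and lower semicontinuous; show it is coercive; deduce existence and uniqueness of a minimizer; and derive the Euler--Lagrange equation \eqref{eq:weak-correction} by differentiation.

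\emph{Finiteness, convexity, semicontinuity.} Write $\mathcal J(H)=\ell_0(H)+Q(H)$ with $Q(H)=\int_\R x^2B_p(E_p,H)\dd x$. Since $\Phi_p$ is convex, $B_p(a,\cdot)\ge0$ is convex, and it is strictly convex for $p>1$. Hence $Q$ is convex, and strictly convex on functions differing on a set of positive measure. Next we check that $Q$ is finite on $\mathcal X_\alpha$.
\begin{itemize}
\item Near the origin $x^2B_p$ is harmless by \eqref{eq:pointwiseH}.
\item For $|x|\ge1$ use \eqref{eq:Bregman-scaling}. Here $b=|x|^\alpha E_p$ is bounded by Lemma~\ref{lem:Gamma-properties}, and $|v|\le C\|H\|_\alpha$ by \eqref{eq:pointwiseH}.
\item If $1<p<2$, the upper bound in \eqref{eq:Bregman-sub-lower} is bad near zeros of $b$. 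Instead we use $B_p(b,v)\le C_p(|v|^2|b|^{p-2}\wedge |v|^p)$, or simply $B_p(b,v)\le C\,|v|^p$ when $|v|\ge|b|$ and $C|v|^2|b|^{p-2}$ otherwise. Since $|v|^p\le C|v|^2$ for bounded $v$, we get $B_p(b,v)\le C(\|H\|_\alpha)|v|^2$ in both cases (in the second, $|b|^{p-2}\le |v|^{p-2}$ is not needed; one uses $|b|^{p-2}|v|^2\le |v|^p$ when $|v|<|b|$ and $p<2$). This gives $Q(H)\le C\int |x|^{2\alpha}|H|^2<\infty$.
\item For $p>2$, \eqref{eq:Bregman-super-lower} together with the boundedness of $b$ and $v$ gives the same conclusion.
\end{itemize}
$\ell_0$ is a bounded linear functional by Lemma~\ref{prop:Gamma-defect}. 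Lower semicontinuity of $Q$ under weak convergence follows from Fatou's lemma. Indeed, weak convergence in $\mathcal X_\alpha$ implies locally uniform convergence by \eqref{eq:PP}, as in the proof of Lemma~\ref{lem:tail-dominance}. Since $\mathcal J$ is convex and strongly continuous (dominated convergence using the bounds above), it is in any case weakly lower semicontinuous.

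\emph{Coercivity, the main obstacle.} We need $\mathcal J(H)\to\infty$ as $\|H\|_\alpha\to\infty$; it suffices that $Q(H)\ge \phi(\|H\|_\alpha)$ with $\phi(t)/t\to\infty$.
\begin{itemize}
\item For $p>2$ this is immediate: the lower bound in \eqref{eq:Bregman-super-lower} and Lemma~\ref{lem:Gamma-dominance} give $Q(H)\ge c_p\|H\|_\alpha^2$.
\item For $1<p<2$ the lower bound $B_p(b,v)\ge c_p|v|^2(|b|+|v|)^{p-2}$ is the delicate point. Since $|b|\le C$, on the set where $|v|\le 1$ we get $B_p\ge c|v|^2$, and where $|v|\ge1$ we get $B_p\ge c|v|^p$.
\item Thus for $|x|\ge T$, $Q(H)\ge c\int_{|x|>T}\min(|v|^2,|v|^p)\dd x$. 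Write $H=tG$ with $\|G\|_\alpha=1$ and $t$ large. By \eqref{eq:pointwiseH} and Lemma~\ref{lem:tail-dominance}, $\int_{|x|>T}|x|^{2\alpha}|G|^2\ge c$ while $|x|^\alpha|G|\le C$. Split according to whether $t|x|^\alpha|G|\le1$ or not. The first region contributes $t^2\int|x|^{2\alpha}|G|^2$ restricted to $\{|x|^\alpha |G|\le 1/t\}$. The second contributes at least $t^p\int |x|^{p\alpha}|G|^p\ge t^pC^{p-2}\int|x|^{2\alpha}|G|^2$ over that region, using $|x|^\alpha|G|\le C$ and $p<2$.
\item Hence $Q(tG)\ge c\min(t^2,t^p)\cdot c'$, which grows superlinearly since $p>1$. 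This beats $|\ell_0(tG)|\le Ct$.
\end{itemize}

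\emph{Existence, uniqueness, Euler--Lagrange.} A minimizing sequence is bounded by coercivity, so it has a weakly convergent subsequence, and weak lower semicontinuity gives a minimizer $H_*$. Uniqueness follows from strict convexity: two distinct entire minimizers differ on a set of positive measure. For \eqref{eq:weak-correction}, differentiate $\epsilon\mapsto\mathcal J(H_*+\epsilon f)$ at $\epsilon=0$. Note that $\partial_h B_p(a,h)=N_p(a+h)-N_p(a)$. Differentiation under the integral is justified by dominated convergence via the mean value theorem. On $|x|\ge1$ the difference quotient is bounded by $x^2\sup_{|\epsilon|\le1}|N_p(E_p+H_*+\epsilon f)-N_p(E_p)||f|$. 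Since $N_p$ is $(p-1)$-Hölder for $p<2$, and Lipschitz on bounded sets after scaling for $p>2$, this is at most $C|x|^{2\alpha}(|H_*|+|f|)^{p-1}|f|$ when $p<2$. For $p>2$ the bound is $C|x|^{2\alpha}(|H_*|+|f|)|f|$, using $x^2|x|^{-\alpha(p-1)}\cdot|x|^{\alpha(p-2)}\cdot|x|^{\alpha}=|x|^{2\alpha}\cdot x^{2-\alpha p}=|x|^{2\alpha}$. For $p<2$ the Hölder bound gives $x^2|x|^{-\alpha p}|v|^{p-1}|w|$ with $v,w$ bounded. Since $v,w\in L^2$ and $|v|^{p-1}|w|\le |v|^2+|w|^2+|w|^{p}$, and boundedness of $w$ gives $|w|^p\le C|w|^{2}$ only if $|w|\lesssim1$ (which holds), everything is integrable. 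The derivative vanishes at the minimum, which yields \eqref{eq:weak-correction}.
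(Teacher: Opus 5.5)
Your architecture matches the paper's: finiteness, strict convexity, Fatou-based weak lower semicontinuity, coercivity, the direct method, and the first variation. Your coercivity split along $\{|v|\le1\}$ is a correct variant of the paper's bound $Q(H)\ge c_p\|H\|_\alpha^2/(1+\|H\|_\alpha)^{2-p}$, and the case $p>2$ is fine throughout.

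\textbf{Finiteness of $Q$ for $1<p<2$.} Two steps in this range rest on the claim that $|v|^p\le C|v|^2$ for bounded $v$. That inequality is reversed: for $p<2$ and $|v|\le C$ one only has $|v|^2\le C^{2-p}|v|^p$.
\begin{itemize}
\item Your pointwise bound $B_p(b,v)\le C(\|H\|_\alpha)|v|^2$ is therefore false. At a zero of $b$ one has $B_p(0,v)=|v|^p/p$, and $|v|^{p-2}\to\infty$ as $v\to0$.
\item Your fallback $B_p\le C|v|^p$ does not integrate. Membership $H\in\mathcal X_\alpha$ only gives $v=|x|^\alpha H\in L^2\cap L^\infty$. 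For example, $H(x)=\Gamma(1+g)^2/(\Gamma(1+g+x)\Gamma(1+g-x))$ with $1+2g=\alpha+\beta$ and $\frac12<\beta\le\frac1p$ lies in $\mathcal X_\alpha$, while $|v|\asymp|x|^{-\beta}|\sin\pi(x-g)|$, so $v\notin L^p$.
\end{itemize}
The bound you set aside as ``bad near zeros of $b$'' is exactly the one that works: $B_p(b,v)\le C_p|b|^{p-2}v^2$. Since $p-2>-1$ and $b$ has separated simple zeros, $\int_k^{k+1}|b|^{p-2}\dd x\le C_p$ uniformly in $k$. Combined with $\sum_k\max_{[k,k+1]}v^2\le C_p\|H\|_\alpha^2$, which follows from \eqref{eq:PP} and subharmonicity, this gives $Q(H)\le C_p\|H\|_\alpha^2$.

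\textbf{The first variation for $1<p<2$.} The same reversal breaks your justification of \eqref{eq:weak-correction}. In scaled variables your H\"older majorant is $(|v_*|+|w|)^{p-1}|w|\ge|w|^p$ with $w=|x|^\alpha f$. This is not integrable for general $f\in\mathcal X_\alpha$, by the same example. The inequality $|v|^{p-1}|w|\le|v|^2+|w|^2+|w|^p$ that you invoke is also false: take $w=v/10$ with $|v|$ small, and use $10^{-p}<\frac1{10}$. So dominated convergence is not justified for every $f\in\mathcal X_\alpha$, which is what the lemma asserts. Either of two fixes works.
\begin{itemize}
\item Replace the H\"older bound by the weighted Lipschitz bound $|N_p(b+u)-N_p(b)|\le C_p|b|^{p-2}|u|$, valid for $1<p<2$. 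Its majorant $C_p|b|^{p-2}(|v_*|+|w|)|w|$ is integrable by the unit-interval argument above.
\item Use the paper's device, which needs no pointwise estimate on $N_p$ at all. The function $F_x(s)=x^2B_p(E_p(x),H_*(x)+sf(x))$ is nonnegative and convex in $s$. Hence its difference quotients at $0$, for $0<|s|\le1$, are bounded by $F_x(-1)+2F_x(0)+F_x(1)$. This majorant is integrable as soon as $Q$ is finite at $H_*-f$, $H_*$ and $H_*+f$, so the first variation reduces entirely to the (correctly proved) finiteness of $Q$ on $\mathcal X_\alpha$.
\end{itemize}
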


\begin{proof}
Write
\[
 \mathcal I(H)\coloneqq\int_\R x^2B_p(E_p(x),H(x))\dd x,
 \qquad \mathcal J(H)=\ell_0(H)+\mathcal I(H).
\]
We first show that $\mathcal I(H)$ is finite for every
$H$ in $\mathcal X_\alpha$.  
Suppose first that $1<p<2$. Then
\[ \int_{|x|\ge 1} B_p(b,{v}) \dd x \le C_p \int_{|x|\ge 1} |b|^{p-2} {v^2} \dd x\le C_p \| H\|_{\alpha}^2\]
by the second bound in \eqref{eq:Bregman-sub-lower}. Since the integral of $|b|^{p-2}$ over all unit intervals is uniformly bounded, this yields the result since, by \eqref{eq:PP}, we obtain an equivalent norm on $\mathcal X_\alpha$ by summing the maxima of ${v^2}$ over all intervals $[k,k+1]$, $k\in \mathbb Z$. If $p>2$, then 
the second bound in \eqref{eq:Bregman-super-lower} gives
\[
 B_p(b,v)\le C_p\{|b|^{p-2}|v|^2+|v|^p\}.
\]
{Since $b$ is bounded, $\|v\|_\infty\le C_p\|H\|_\alpha$,
and $v$ is square-integrable, it follows that
$\mathcal I(H)\le C_p(\|H\|_\alpha^2+\|H\|_\alpha^p)$.} 
Taking into account also Lemma~\ref{prop:Gamma-defect}, we conclude that
$\mathcal J$ is finite on
$\mathcal X_\alpha$.

For fixed $a$, the map $h\mapsto B_p(a,h)$ is strictly convex.  If $H_0\ne H_1$ in $\mathcal X_\alpha$, then $H_0-H_1$ is a
nonzero entire function, so its real zeros are discrete.  The strict
convexity inequality therefore holds for almost every $x$.  Integration
against $x^2\dd x$ preserves the strict inequality.  Since $\ell_0$ is
linear, we conclude that $\mathcal J$ is strictly convex.

We next verify that $\mathcal J$ is weakly lower semicontinuous. To this end,  suppose that
$H_j$ tend weakly to $H$ in $\mathcal X_\alpha$.  By
\eqref{eq:pointwiseH}, this implies that $H_j(x)\to H(x)$ for
every $x$.  Since $B_p$ is continuous and nonnegative, Fatou's lemma
gives
\[
 \mathcal I(H)\le\liminf_{j\to\infty}\mathcal I(H_j).
\]
Moreover, $\ell_0(H_j)\to\ell_0(H)$ because $\ell_0$ is bounded and
linear.  Thus
\[
 \mathcal J(H)\le\liminf_{j\to\infty}\mathcal J(H_j).
\]
This proves sequential weak lower semicontinuity.  In particular, every
sublevel set of $\mathcal J$ is norm closed.  These sets are convex, so
they are weakly closed by the Hahn--Banach separation theorem.  Thus $\mathcal J$ is weakly lower
semicontinuous.

We finally prove coercivity. For $1<p<2$, we get
\[ {\mathcal I(H)}\ge c_p \frac{\norm{H}_\alpha^2}{(1+\norm{H}_{\alpha})^{2-p}} \]
by using \eqref{eq:Bregman-scaling} and then \eqref{eq:Bregman-sub-lower} along with Lemma~\ref{lem:tail-dominance}. We conclude that coercivity holds because $\ell_0(H)\le C_p \| H\|_\alpha$. When $p>2$, appealing again to \eqref{eq:Bregman-scaling} and applying \eqref{eq:Bregman-super-lower} along with {Lemma~\ref{lem:Gamma-dominance}}, we get
\[ {\mathcal I(H)}\ge c_p \| H\|_{\alpha}^2.\]
We conclude that coercivity holds in this case as well.

Since we have shown that $\mathcal J$ is finite, coercive, strictly convex and weakly semicontinuous, we may apply the direct method from \cite[Chapter~II, Proposition~1.2, p.~35]{EkelandTemam} and conclude that there is a unique minimizer $H_\ast$ in $\mathcal X_\alpha$.

We finally compute the first variation.  Fix $H$ and $K$ in $\mathcal X_\alpha$
and, for each $x$, set
\[
 F_x(s)=x^2B_p(E_p(x),H(x)+sK(x)).
\]
The function $F_x$ is nonnegative and convex.  For $0<|t|\le1$, the
inequalities between the secant slopes of a convex function give
\[
 \left|\frac{F_x(t)-F_x(0)}{t}\right|
 \le F_x(-1)+2F_x(0)+F_x(1).
\]
The right-hand side is integrable, because $\mathcal I$ is finite at
$H-K$, $H$, and $H+K$.  Since $\Phi_p$ is continuously differentiable
for every $p>1$, the difference quotient tends pointwise to
\[
 x^2\{N_p(E_p+H)-N_p(E_p)\}K.
\]
Dominated convergence therefore gives
\[
 D\mathcal J(H)[K]=\ell_0(K)+\int_\R x^2
 \{N_p(E_p+H)-N_p(E_p)\}K\dd x.
\]
At the minimizer $H_*$ this derivative is zero for every
$K$ in $\mathcal X_\alpha$, which proves \eqref{eq:weak-correction}.

\end{proof}

\begin{proof}[Proof of Lemma~\ref{prop:Gamma-correction}{, with the identification of $K(p)$ completed below}]
{The case $p=2$ is immediate from $\varphi_2=E_2$,
with $a_2=1$ and $\widetilde r_2=0$. For $p\ne2$ and a real even
$f\in PW^p$ with $f(0)=0$, we first justify
\[
 \ell_0(f/x^2)=\int_\R N_p(E_p(x))f(x)\dd x.
\]
Choose a real even Schwartz entire function $\psi$ of type at most $\pi$
with $\psi(0)=1$, and put
$f_\varepsilon(z)=f((1-2\varepsilon)z)\psi(\varepsilon z)$,
$0<\varepsilon<1/2$. The identity holds for $f_\varepsilon$ because
$\widehat{f_\varepsilon/x^2}$ is smooth and supported strictly inside the band.
Moreover, $f_\varepsilon\to f$ in $L^p$ and
$f_\varepsilon/x^2\to f/x^2$ in $\mathcal X_\alpha$:
near zero use the double zero and local uniform convergence;
on the tails use $PW^p\subset PW^2$ for $p\le2$, and H\"older's
inequality with the weight $|x|^{2\alpha-4}$ for $p>2$.
Pass to the limit using boundedness of $\ell_0$ and H\"older's inequality.
}
It remains only to verify that $E_p(0)+H_\ast(0)\neq 0$. Indeed, when $f$ is a {real} even function in $PW^p$ vanishing at $0$, $f(x)/x^2$ is in $\mathcal{X}_\alpha$, and so \eqref{eq:weak-correction} shows that 
\begin{equation} \label{eq:RK} \int_\R 
 N_p(E_p(x)+H_*(x))f(x) \dd x=0.\end{equation} 
 Hence \eqref{eq:Lportho} holds for 
 \[\varphi(x)\coloneqq \frac{E_p(x)+H_*(x)}{E_p(0)+H_*(0)},\]
 provided that $E_p(0)+H_\ast(0)\neq 0${.} However, if $E_p(0)+H_\ast(0)=0$, then \eqref{eq:RK} would apply to 
 $f=E_p + H_\ast$, and we would get $E_p(x)+H_*(x)\equiv 0$. But this is impossible since $E_p$ does not belong to $\mathcal X_\alpha$.
 {Thus, with $a_p=(1+H_*(0))^{-1}\ne0$,
$\widetilde r_p:=\varphi_p-a_pE_p=a_pH_*\in\mathcal X_\alpha$.
After proving indexed localization we will identify $a_p=K(p)$.}
\end{proof}
As noted in the introduction, the following consequence of {the preceding construction} will be the starting point for our bootstrap argument leading to Theorem~\ref{thm:main}. 
\begin{lemma} \label{cor:l2-localization} The sequence $(\tau_n-c_n)$ is in $\ell^2$. \end{lemma}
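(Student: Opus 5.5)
We work from the identity $\varphi_p=a_pE_p+\widetilde r_p$ with $\widetilde r_p\in\mathcal X_\alpha$, proved just above. The plan has three steps: locate each zero $\tau_n$ near a zero of $E_p$ by a Rouché/intermediate-value argument on the real line, get a quantitative displacement bound, and sum its squares. The indexing, meaning that the $n$-th zero pairs with $c_n$, has to be settled separately.

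\emph{Displacement bound.} Multiply by $x^\alpha$ and use \eqref{eq:Gamma-phase}. For $x\ge1$ this gives
\[
x^\alpha\varphi_p(x)=a_p\frac{\Gamma(1+\gamma)^2}{\pi}m_p(x)\sin\pi(x-\gamma)+v(x),\qquad v(x)\coloneqq x^\alpha\widetilde r_p(x).
\]
By \eqref{eq:m-symbol}, $m_p(x)\to1$. By \eqref{eq:PP} and subharmonicity, the local maxima $\delta_k\coloneqq\max_{[k-1/2,k+1/2]}|v|+|v'|$ form a sequence in $\ell^2$. This uses the equivalent norm obtained by summing maxima over unit intervals, as in the proof of Lemma~\ref{prop:Euler-solution}, together with \eqref{eq:weighted-Bernstein} for the derivative. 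On the interval $|x-c_n|\le 1/2$, the leading term is a function $g$ with $|g'|\ge c>0$ near $c_n$ and $|g|\ge c|x-c_n|$. Therefore, once $n$ is large enough that $\delta_n$ is small, $x^\alpha\varphi_p$ has exactly one zero $t$ in $[c_n-1/2,c_n+1/2]$, and $|t-c_n|\le C\delta_n$. On the complementary intervals $|x-c_n-1/2|\le 1/2-\eta$ the sine term is bounded below, so $\varphi_p$ has no zeros there. Since all zeros of $\varphi_p$ are real, simple, and equal to $\pm\tau_n$, the zeros beyond some $N_0$ are in bijection with the $c_n$, with an index shift $\tau_{n+j}\leftrightarrow c_n$ for some fixed integer $j$. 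Then $\sum_{n\ge N_0}|\tau_{n+j}-c_n|^2\le C\sum\delta_n^2<\infty$.

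\emph{Index shift: the main obstacle.} I expect this to be the hard step, because it needs global information rather than local perturbation. The first thing to try is a zero count on $[0,R]$ with $R=c_n+1/2$ a half-integer point of the shifted lattice. There $E_p$ has exactly $n$ positive zeros, and $\varphi_p$ has $n+j$ of them. Write $\varphi_p/(a_pE_p)=1+\widetilde r_p/(a_pE_p)$. On a large circle, or a rectangle with vertical sides at $\pm R$, the perturbation $\widetilde r_p/E_p$ is small on the vertical sides by the argument above. On the horizontal sides at height $\pm iY$ it is also small: $\widetilde r_p$ has type $\pi$ and the decay \eqref{eq:pointwiseH} on horizontal lines, obtained by Plancherel--P\'olya shifted in $y$, while $|E_p(x+iY)|\asymp e^{\pi Y}|x|^{-\alpha}$ from the gamma quotient. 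By Rouché, or by an argument-principle count, both functions have the same number of zeros in the rectangle. Hence $j=0$.

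An alternative for fixing $j$ is to compare growth along the imaginary axis. From \eqref{eq:phip}, $\log\varphi_p(iy)-\log E_p(iy)=\sum\log\frac{1+y^2/\tau_n^2}{1+y^2/c_n^2}$, and a shift $j$ produces a factor $y^{\mp2j}$. But $\varphi_p/E_p\to a_p$ along $iy$, because $\widetilde r_p(iy)/E_p(iy)\to0$ by the bounds above. So $j=0$. This second route also yields $a_p=K(p)$, since the product converges once $j=0$ and the $\ell^2$ bound holds.
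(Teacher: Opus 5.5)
Your localization step is sound and essentially matches the paper's. For large $n$ you get exactly one simple zero in each $[c_n-\frac12,c_n+\frac12]$, with displacement controlled by the $\ell^2$ sequence of local maxima of $|x^\alpha\widetilde r_p|+|(x^\alpha\widetilde r_p)'|$. Hence $(\tau_{n+j}-c_n)\in\ell^2$ for some fixed shift $j$.

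\textbf{The gap: excluding $j\neq0$.} Both of your routes need $\widetilde r_p$ to be negligible relative to $E_p$ off the real axis, and the bounds you cite do not give this.

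\emph{Size of $E_p$ off the axis.} Along the imaginary axis, $E_p(iy)=\Gamma(1+\gamma)^2/|\Gamma(1+\gamma+iy)|^2\asymp y^{-\alpha}e^{\pi y}$. More generally, $|E_p(x+iY)|\asymp e^{\pi Y}(|x|+Y)^{-\alpha}$ for $Y\ge1$, not $e^{\pi Y}|x|^{-\alpha}$ as you wrote.

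\emph{What Plancherel--P\'olya gives.} Transporting the real-line decay \eqref{eq:pointwiseH} to horizontal lines gives at best $|\widetilde r_p(x+iY)|\le C\,e^{\pi Y}(1+|x|)^{-\alpha}$, with no gain in $Y$ beyond $e^{\pi Y}$. At $x=0$ this is not $o(Y^{-\alpha}e^{\pi Y})$. So neither $\widetilde r_p(iy)/E_p(iy)\to0$ nor smallness on the horizontal sides follows.

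\emph{Extra problem with Rouch\'e.} The first zeros of $\varphi_p$ need not lie near the $c_n$, so $|\widetilde r_p|<|a_pE_p|$ fails near the origin. The horizontal sides must therefore be placed high. You then need control of $\widetilde r_p(R+iy)$ for all $|y|\le Y$, and your real-line argument says nothing about that.

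\textbf{How to repair it.} The missing ingredient is a genuine off-axis estimate. Since $\widetilde r_p(z)e^{i\pi z}$ lies in $H^2$ of the upper half-plane, the Poisson representation gives
$|\widetilde r_p(iy)|e^{-\pi y}\le\frac1\pi\int_\R\frac{y\,|\widetilde r_p(t)|}{t^2+y^2}\dd t$.
\begin{itemize}
\item For $\alpha<1$, this is $o(y^{-\alpha})$, because $|t|^\alpha\widetilde r_p(t)\to0$.
\item For $1<\alpha<2$, apply the same bound to $z\widetilde r_p(z)$. This function lies in $PW^2$ and satisfies $|t|^{\alpha-1}|t\,\widetilde r_p(t)|\to0$.
\end{itemize}
With this, your second route goes through, including the identification $a_p=K(p)$.

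\textbf{The paper's route.} The paper avoids the complex plane entirely. It compares $\varphi_p$ and $E_p$ at the real points $x_k=c_k+\frac12$. There $|E_p(x_k)|\asymp c_k^{-\alpha}$ and $\widetilde r_p(x_k)=o(c_k^{-\alpha})$ follow immediately from what you already have. Writing $\varphi_p=g\prod_{n\ge N}(1-x^2/\rho_n^2)$, the tail quotient against $E_p$ is $\asymp1$; this uses Cauchy--Schwarz and the uniform boundedness of $\sum_n(c_n-x_k)^{-2}$. This forces $|g(x_k)|\asymp c_k^{2(N-1)}$, which is exactly the statement $j=0$.
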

\begin{proof}
We set $J_n\coloneqq [c_n-\frac14,c_n+\frac14]$ and show first that $\varphi_p$ has exactly one zero in $J_n$ for all sufficiently large $n$. To this end, we begin by noting that the {absolute value of the} derivative of $x^\alpha E_p(x)$ is bounded below on \(J_n\). On the other hand, since {$\widetilde r_p$ from the preceding construction} is in $\mathcal{X}_\alpha$, both ${x^\alpha\widetilde r_p(x)}$ and its derivative tend to $0$ when $x\to \infty$. This implies that, when $n$ is large enough, say $n\ge N$,
$x^\alpha \varphi(x)$ is monotone and changes sign on $J_n$. Hence it has exactly one zero there which we will temporarily call $\rho_n$.

The mean-value theorem and the lower bound for the {absolute value of the} derivative of $x^\alpha \varphi_p(x)$ give
\begin{equation}\label{eq:displacement}
 |\rho_n-c_n|\le C_pc_n^{\alpha}\, |{\widetilde r_p(c_n)}|.
\end{equation}
Then applying \eqref{eq:PP} to ${\widetilde r_p}$, we find that the sequence $(\rho_n-c_n)_{n\ge N}$ is square-summable.

Since $\varphi_p$ is an even function, 
\[ \varphi_p(x)=g(x)\prod_{n\ge N} \left(1-\frac{x^2}{\rho_n^2}\right),\]
{where $g$ is a polynomial: for large $x$ outside the intervals $J_n$,
$|x^\alpha E_p(x)|$ is bounded below and
$x^\alpha\widetilde r_p(x)\to0$, so there are no additional large real zeros.
The assertion follows from the paired product \eqref{eq:phip}.
It remains to show that its degree is $2(N-1)$.} Straightforward estimates, using that
$(\rho_n-c_n)_{n\ge N}$ is in $\ell^2$, show that
\[ \prod_{n\ge N} \frac{\left(1-\frac{(c_k+\frac12)^2}{c_n^2}\right)}
{\left(1-\frac{(c_k+\frac12)^2}{\rho_n^2}\right)} \asymp 1, \quad k\ge N. \]
Since we also have $|\varphi_p(c_k+\frac12)|\asymp |E_p(c_k+\frac12)|$, we find that
\[ c_k^{2(N-1)} {\left|\prod_{n\ge N}
\left(1-\frac{(c_k+\frac12)^2}{\rho_n^2}\right)\right|} \asymp c_k^{-\alpha}. \]
{It follows that $|g(c_k+\frac12)|\asymp c_k^{2(N-1)}$,
so $\deg g=2(N-1)$ and $\rho_n=\tau_n$ for $n\ge N$.
The product $K(p)=\prod_n(c_n/\tau_n)^2$ now converges.
At $x_k=c_k+\frac12$, the paired-product quotient satisfies
$\varphi_p(x_k)/E_p(x_k)\to K(p)$: split its logarithm into a
finite head and an $\ell^2$ tail, using the uniformly bounded
$\ell^2$ norm of $((c_n-x_k)^{-1},(c_n+x_k)^{-1})_n$.
On the other hand, $\widetilde r_p(x_k)/E_p(x_k)\to0$, so $a_p=K(p)$.
This also completes the proof of Lemma~\ref{prop:Gamma-correction}.}
\end{proof}

\section{A bootstrap argument}
\label{sec:Gamma}

\subsection{The operator \texorpdfstring{$\mathscr{Z}$}{Z} and its Jacobian} Adopting a terminology used in \cite{BORS}, we say that an increasing sequence of positive numbers  $\mathbf t=(t_n)$ is admissible if $\mathbf t - \mathbf c$ is in $\ell^2$, where $\mathbf c \coloneqq (c_n)$ are the positive zeros of $E_p$. We proved in the previous section that $\bm{\tau} \coloneqq (\tau_n)$ is an admissible sequence, and our goal is now to strengthen this to obtain Theorem~\ref{thm:main}. 

We set
\begin{equation}\label{eq:paired-product}
 \phi_{\mathbf t}(z)\coloneqq \prod_{n\ge1}\left(1-\frac{z^2}{t_n^2}\right)
\end{equation}
which is an entire function of exponential type $\pi$ with the same asymptotic behavior as $E_p$.   Put
\begin{equation}\label{eq:R-def}
 R_k(\mathbf t)=\int_\R|\phi_{\mathbf t}(x)|^p
 \frac{x^2}{t_k^2-x^2}\dd x.
\end{equation}
Since the functions $\phi_{\bm{t}}(x)x^2/(x^2-t_n^2)$ constitute a complete sequence in the subspace of $PW^p$ consisting of even functions vanishing at $0$ and $\bm{\tau}$ is an admissible sequence, we have, by \eqref{eq:Lportho}, that  $\bm{t}=\bm{\tau}$ if and only if
\[ R_k(\bm{t})=0, \qquad k\ge 1.\]
This, combined with an even power series expansion of $R_k(\bm c)$ in terms of $c_n^{-1}$, will be the starting point for our bootstrap argument.
We will see that a similar expansion of a finite perturbation of $\bm c$, along with the facts that $\bm{\tau}-\bm{c}$ is square-summable and $R_k(\bm{\tau})=0$, forces $\bm{\tau}-\bm{c}$, by an induction argument, to enjoy itself a power series expansion in terms of $c_n^{-1}$. 

We will need some precise quantitative information about the operator $\bm t \mapsto \mathscr{Z}\bm{t}$, where $\mathscr{Z}\bm{t}=(z_k)$ is a sequence whose entries are
\begin{equation} \label{eq:zkdef} z_k \coloneqq t_k \kappa(\bm{t}) R_k(\bm{t}), \qquad \kappa(\bm{t})\coloneqq \prod_{n\ge 1} \left(\frac{t_n}{c_n}\right)^{2p}.\end{equation}
    Note that the infinite product $\kappa(\bm{t})$ converges on the assumption that $\bm t$ is an admissible sequence. Since $(\mathscr Z \bm{t})_k$ is just a normalization of $R_k(\bm{t})$, we have $\mathscr Z\bm\tau=0$ if and only if $\mathscr{Z}\bm{t}=0$.
Our choice of normalization of $R_k(\bm t)$ is justified by the following lemma, which shows that the Jacobian $D\mathscr Z(\bm{t})$ of $\mathscr{Z}\bm{t}$ has a particularly simple form when $\bm{t}=\bm{\tau}$.

\begin{lemma}\label{lem:normalized-zero-algebra}
Let $\mathbf t$ be an admissible sequence. Then
\begin{equation} \label{eq:Jacobian}
\partial_{t_j} z_k 
 =\begin{cases} 2p\,\frac{t_jz_k-t_kz_j}{t_j^2-t_k^2}, & j\neq k ;\\
 \kappa(\bm{t})\big((1+2p)R_k(\bm{t})+t_k \partial_{t_k} R_k(\bm{t})\big), & j=k.\end{cases} 
\end{equation}
In particular,
\begin{equation}\label{eq:normalized-scalar-hessian}
 D\mathscr Z(\boldsymbol\tau)=\lambda_p I,
 \qquad
 \lambda_p\coloneqq  \frac{\kappa(\bm{\tau})}{\mathscr C_p}.
\end{equation}
\end{lemma}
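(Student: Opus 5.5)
The plan is to differentiate $z_k=t_k\kappa(\bm t)R_k(\bm t)$ directly, using logarithmic derivatives of the product. For fixed real $x$,
\[
 \partial_{t_j}\log|\phi_{\bm t}(x)|^p=\frac{2p\,x^2}{t_j(t_j^2-x^2)},\qquad \partial_{t_j}\log\kappa(\bm t)=\frac{2p}{t_j}.
\]

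\textbf{Off-diagonal entries ($j\ne k$).} The weight $x^2/(t_k^2-x^2)$ does not depend on $t_j$, so
\[
 \partial_{t_j}z_k=\frac{2p\,t_k\kappa}{t_j}\Big[R_k+\int_\R|\phi_{\bm t}|^p\frac{x^4}{(t_j^2-x^2)(t_k^2-x^2)}\dd x\Big].
\]
Put $A=t_j^2$ and $B=t_k^2$. The partial-fraction identity
\[
 \frac{x^4}{(A-x^2)(B-x^2)}=\frac{1}{B-A}\Big[\frac{Ax^2}{A-x^2}-\frac{Bx^2}{B-x^2}\Big]
\]
turns the integral into $(t_j^2R_j-t_k^2R_k)/(t_k^2-t_j^2)$. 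Hence
\[
 \partial_{t_j}z_k=2p\kappa\,t_jt_k\,\frac{R_k-R_j}{t_j^2-t_k^2},
\]
and since $t_jz_k-t_kz_j=\kappa t_jt_k(R_k-R_j)$, this is exactly \eqref{eq:Jacobian}.

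The differentiation under the integral sign needs justification. Since $\bm t$ is admissible, for $t_j'$ near $t_j$ the ratio $\phi_{\bm t'}/\phi_{\bm t}$ is a single factor quotient, which is uniformly controlled away from $\pm t_j$. Near $\pm t_j$, the factor $|\phi|^p$ vanishes to order $p>1$, which tames the simple pole. Dominated convergence then applies.

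\textbf{Diagonal entry ($j=k$).} The product rule gives $\kappa R_k+t_k\kappa(2p/t_k)R_k+t_k\kappa\,\partial_{t_k}R_k$, which is the stated formula. At $\bm t=\bm\tau$ all $R_k$ vanish, so the off-diagonal entries are zero and the diagonal reduces to $\kappa(\bm\tau)\,t_k\partial_{t_k}R_k(\bm\tau)$. It therefore remains to prove
\[
 t_k\partial_{t_k}R_k(\bm\tau)=\int_\R|\varphi_p|^p\dd x=\frac1{\mathscr C_p}.
\]

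Differentiating both $|\phi|^p$ and the weight gives
\[
 t_k\partial_{t_k}R_k=\int|\phi|^p\frac{x^2(2px^2-2t_k^2)}{(t_k^2-x^2)^2}\dd x=-2pR_k+(2p-2)t_k^2\int|\phi|^p\frac{x^2}{(t_k^2-x^2)^2}\dd x.
\]
The last integral converges because its integrand is $O(|x-t_k|^{p-2})$ near $x=\pm t_k$. At $\bm\tau$ the first term vanishes.

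To handle the second term, write $\phi=(1-x^2/t_k^2)\psi$ with $\psi$ even, in $PW^p$, and $\psi(0)=1$. Use
\[
 (2p-2)x\,|t_k^2-x^2|^{p-2}=-\frac{\dd}{\dd x}\Big[(t_k^2-x^2)|t_k^2-x^2|^{p-2}\Big]
\]
and integrate by parts against $x|\psi|^p$. The boundary terms vanish by decay on the real line and by the order-$(p-1)$ vanishing at $\pm t_k$. The result is
\[
 (2p-2)t_k^2\int|\phi|^p\frac{x^2}{(t_k^2-x^2)^2}\dd x=\int_\R N_p(\phi)\,g\,\dd x,
\]
where $g$ is built from $x\psi'$ and $\psi$ (explicitly, $g=\frac{t_k^2}{t_k^2-x^2}\cdot\frac{(x|\psi|^p)'}{N_p(\psi)}$, up to the factor bookkeeping). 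The point is that $g$ is a real even function in $PW^p$. Its combination with $\phi$ is arranged so that $g(0)=1$.

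By \eqref{eq:Lportho} applied to $g-\varphi_p$, which vanishes at $0$, we get
\[
 \int N_p(\varphi_p)\,g=\int N_p(\varphi_p)\,\varphi_p=\int|\varphi_p|^p=\mathscr C_p^{-1}.
\]
This yields \eqref{eq:normalized-scalar-hessian}.

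\textbf{Main obstacle.} I expect the hardest step to be this last integration by parts. It requires choosing the antiderivative cleanly, so that the resulting test function $g$ genuinely lies in $PW^p$ (with no pole at $\pm t_k$, using $\psi(\pm t_k)\ne0$) and has value exactly $1$ at the origin. The justifications for differentiating under the integral sign are routine by comparison.
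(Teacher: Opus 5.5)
Your off-diagonal and diagonal computations coincide with the paper's. So does your reduction of the case $\bm t=\bm\tau$ to showing $(2p-2)\tau_k^2\int_\R|\varphi_p|^p x^2(\tau_k^2-x^2)^{-2}\dd x=\|\varphi_p\|_p^p$.

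One correction is needed in the last step: your displayed formula for $g$ is wrong. The factor $t_k^2/(t_k^2-x^2)$ does not belong there. With it, $g$ would in general have poles at $\pm t_k$, and the nonvanishing of $\psi(\pm t_k)$ plays no role. Carrying out your own integration by parts gives
\[
 (2p-2)t_k^2\int_\R|\phi|^p\frac{x^2}{(t_k^2-x^2)^2}\dd x
 =t_k^{2-2p}\int_\R N_p(t_k^2-x^2)\bigl(x|\psi|^p\bigr)'\dd x .
\]
Three facts identify the integrand: $t_k^{2-2p}N_p(t_k^2-x^2)=N_p(1-x^2/t_k^2)$, the identity $(x|\psi|^p)'=N_p(\psi)(\psi+px\psi')$ holds with no division by $N_p(\psi)$, and $N_p$ is multiplicative. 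Hence the integrand is $N_p(\phi)\,g$ with simply $g=\psi+px\psi'$.

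This $g$ is real and even, with $g(0)=\psi(0)=1$. It also lies in $PW^p$: from $|x\psi(x)|\le C(1+|x|)^{-1-\alpha}$ you get $x\psi\in PW^p$, and then $x\psi'=(x\psi)'-\psi\in PW^p$ by Bernstein's inequality. So the step you flagged as the main obstacle is a short verification. With this $g$, your argument is complete.

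Your route differs from the paper's in how this last integral is evaluated. The paper uses $R_k(\bm\tau)=0$ a second time, together with evenness, to replace the weight $2\tau_k^2x^2(\tau_k^2-x^2)^{-2}$ by $x^2(\tau_k-x)^{-2}$. It then invokes the orthogonality of \cite[Theorem~3.6]{BCOS} for non-even test functions twice. The second application uses $\varphi_p/(x-\tau_k)^2-\varphi_p'/(x-\tau_k)$, after an integration by parts against $(\tau_k-x)^{-1}$.

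You instead split off the zero pair $\pm\tau_k$ and integrate by parts against $N_p(\tau_k^2-x^2)$, which produces a single even test function. You therefore need \eqref{eq:Lportho} only once, and only in the even form stated in the paper. Your version is a little shorter and more self-contained. The paper's version avoids the auxiliary factorization $\phi=(1-x^2/t_k^2)\psi$, at the cost of the extra symmetrization and a second orthogonality step.
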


\begin{proof}
We begin by differentiating $R_k=R_k(\bm{t})$. To this end, we 
note to begin with that
\begin{equation}\label{eq:help}
 \partial_{t_k}|\phi_{\mathbf t}(x)|^p
 =\frac{2p}{t_k}|\phi_{\mathbf t}(x)|^p
 \frac{x^2}{t_k^2-x^2},
\end{equation}
When $j\ne k$, we  use that
\[
 \frac{x^4}{(t_j^2-x^2)(t_k^2-x^2)}
 =\frac{t_k^2}{t_j^2-t_k^2}\frac{x^2}{t_k^2-x^2}
 -\frac{t_j^2}{t_j^2-t_k^2}\frac{x^2}{t_j^2-x^2}
\]
and find that 
\begin{equation}\label{eq:offdiag-Jacobian}
 \partial_{t_j} R_k
 =\frac{2p}{t_j}\frac{t_k^2R_k-t_j^2R_j}{t_j^2-t_k^2}.
\end{equation}
Noting that 
\[ \partial_{t_j} \kappa(\bm{t})=\frac{2p}{t_j} {\kappa(\bm t)},\]
we therefore get
\[ \partial_{t_j} z_k = \kappa(\bm{t})t_k\big(\frac{2p}{t_j}R_k+\partial_{t_j} R_k\big)=2p\,\frac{t_jz_k-t_kz_j}{t_j^2-t_k^2},\]
as desired. The case $j=k$ in \eqref{eq:Jacobian} follows immediately by Leibniz's rule. 

Since $R_k(\bm\tau)=0$, $D\mathscr Z(\bm{\tau})$ is a diagonal matrix. By \eqref{eq:Jacobian}, the $k$th diagonal entry is
\[  \kappa(\bm{\tau})t_k \partial_{t_k} R_k(\bm{\tau}).  \]
Hence it remains to show that
\begin{equation} \label{eq:ident1} \tau_k \partial_{\tau_k} R_k(\bm{\tau})=\| \varphi_p\|_p^p.\end{equation}
To prove this, we start by noting that 
\[ \partial_{\tau_k} R_k(\bm{\tau}) = \int_{\R} \frac{| \varphi_p(x)|^px^2}{(\tau_k^2-x^2)^2}\Big(\frac{2p}{\tau_k}x^2-2\tau_k\Big) \dd x\]
by Leibniz's rule along with \eqref{eq:help}. It follows that
\begin{equation} \label{eq:almost} \tau_k \partial_{t_k} R_k(\bm{\tau})=
2\tau_k^2(p-1) \int_{\R} \frac{| \varphi_p(x)|^px^2}{(\tau_k^2-x^2)^2} \dd x
= (p-1) \int_{\R} \frac{| \varphi_p(x)|^px^2}{(\tau_k-x)^2} \dd x,
\end{equation}
where in the last step we used \eqref{eq:Lportho}. 
To compute the latter integral, we write
\begin{equation} \label{eq:tran} \int_{\R} \frac{| \varphi_p(x)|^px^2}{(\tau_k-x)^2} \dd x=\tau_k^2\int_{\R} \frac{| \varphi_p(x)|^p}{(\tau_k-x)^2} \dd x-\| \varphi_p\|_p^p\end{equation}
by \cite[Theorem 3.6]{BCOS}. Since $p>1$, we may use integration by parts to get
\[ \int_{\R}\frac{| \varphi_p(x)|^p}{(\tau_k-x)^2} \dd x= -p \int_{\R}\frac{| \varphi_{p}(x)|^{p-2}\varphi_p(x) \varphi_p'(x)}{\tau_k-x} \dd x.\]
Since the function $\frac{\varphi_p(x)}{(x-\tau_k)^2}-\frac{\varphi'(x)}{x-\tau_k}$ belongs to $PW^p$, another application of \cite[Theorem 3.6]{BCOS} yields
\[ \tau_k^2\int_{\R}\frac{| \varphi_p(x)|^p}{(\tau_k-x)^2} \dd x=\frac{p}{p-1} \|\varphi_p\|_p^p. \]
Plugging this into \eqref{eq:tran} and returning to \eqref{eq:almost}, we arrive at the desired conclusion that $D\mathscr{Z}=\lambda_p I$.
\end{proof}

\subsection{Estimates of \texorpdfstring{$\phi_{\mathbf{t}}$}{phi t}}

The following estimate will be used below.

\begin{lemma}\label{lem:local-product-comparison}
Let $\bm t$ be an admissible sequence. Then
\begin{equation}\label{eq:local-product-envelope}
 |\phi_{\bm{t}}(x)|\asymp 
 (1+|x|)^{-\alpha}
 \min\{1,\dist(x,\{\pm t_n:n\geq1\})\}.
\end{equation}
Moreover, 
\begin{equation}\label{eq:local-product-derivative}
 |\phi_{\bm{t}}'(t_k)|\asymp_p t_k^{-\alpha}
\end{equation}
and \begin{equation} \label{eq:diag}
{|\partial_{t_k} z_k|} \le C(\bm{t})
\end{equation}
uniformly in $k$. Here $C(\bm{t})$ and the implicit constants in \eqref{eq:local-product-envelope} and \eqref{eq:local-product-derivative} can be chosen to depend only on the $\ell^2$ norm of $\bm c - \bm t$ and the separation of $\bm t${, including its distance from zero}.
\end{lemma}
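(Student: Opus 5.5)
The plan is to compare $\phi_{\bm t}$ with $E_p$ through the quotient
\[
Q(x)\coloneqq\frac{\phi_{\bm t}(x)}{E_p(x)}=\prod_{n\ge1}\frac{1-x^2/t_n^2}{1-x^2/c_n^2}.
\]
By Lemma~\ref{lem:Gamma-properties}, $|E_p(x)|\asymp(1+|x|)^{-\alpha}\min\{1,\dist(x,\{\pm c_n\})\}$, since $|\sin\pi(x-\gamma)|$ is comparable to the distance to $\{c_n\}$ and $m_p\to1$. It therefore suffices to show that the modified quotient
\[
Q(x)\prod_n\frac{x-c_n}{x-t_n}\cdot\frac{x+c_n}{x+t_n},
\]
with only the factors for $n$ near $|x|$ removed, is $\asymp1$.

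I would write each factor as $\frac{c_n^2}{t_n^2}\cdot\frac{t_n^2-x^2}{c_n^2-x^2}$. The constant part $\prod_n c_n^2/t_n^2$ converges to a positive number because $\sum_n(t_n-c_n)/c_n$ converges by Cauchy--Schwarz; the distance to zero of $\bm t$ guarantees no factor degenerates. For the remaining part, fix $x>0$ and let $n_x$ be the index with $c_n$ nearest to $x$. For $|n-n_x|\ge2$, write
\[
\log\frac{t_n-x}{c_n-x}=\log\Bigl(1+\frac{t_n-c_n}{c_n-x}\Bigr),
\]
and similarly for $t_n+x$. The first-order terms are bounded by $\sum_n\frac{|t_n-c_n|}{|c_n-x|}\le\|\bm t-\bm c\|_2\bigl(\sum_n|c_n-x|^{-2}\bigr)^{1/2}$, which is uniformly bounded. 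The second-order terms are bounded by $\|\bm t-\bm c\|_2^2$. There is one caveat: $|t_n-c_n|$ may exceed $|c_n-x|/2$ for finitely many $n$, with the number controlled by $\|\bm t-\bm c\|_2$. Those $n$, together with $|n-n_x|\le1$, form a bounded window of indices near $x$. In that window I compare the factors $|t_n-x|$ directly with $\min\{1,\dist(x,\{t_n\})\}$, using separation of $\bm t$ and the fact that the window contains at most $C$ indices, so that at most one $t_n$ is within distance $<\delta$ of $x$. This yields \eqref{eq:local-product-envelope}.

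For \eqref{eq:local-product-derivative}, I would write
\[
\phi_{\bm t}'(t_k)=\lim_{x\to t_k}\frac{\phi_{\bm t}(x)}{x-t_k}
\]
and apply the same factorization with the factor $(1-x/t_k)$ removed. The argument above shows that the rest of the product, divided by $E_p(x)/\sin\pi(x-\gamma)$, is $\asymp1$ uniformly near $t_k$, giving $|\phi'_{\bm t}(t_k)|\asymp t_k^{-1}\cdot t_k\cdot t_k^{-\alpha}=t_k^{-\alpha}$. Alternatively, \eqref{eq:local-product-envelope} on a neighborhood of size $\delta$ together with a Cauchy estimate for an upper bound, and the lower bound on the envelope at $t_k\pm\delta$, gives the same conclusion.

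For \eqref{eq:diag}, by Lemma~\ref{lem:normalized-zero-algebra} I must bound $\kappa(\bm t)\bigl(|R_k|+t_k|\partial_{t_k}R_k|\bigr)$. The factor $\kappa(\bm t)$ is bounded in terms of the $\ell^2$ norm. Using \eqref{eq:help} and \eqref{eq:local-product-envelope}, the integrand of $R_k$ is $|\phi_{\bm t}|^p x^2/(t_k^2-x^2)$. Near $\pm t_k$ we have $|\phi_{\bm t}(x)|^p\lesssim t_k^{-2}|x-t_k|^p$, so this part is integrable since $p>1$. Away from $\pm t_k$, the part with $|x|\ll t_k$ is bounded by $t_k^{-2}\int x^2(1+|x|)^{-2}\dd x$ over $|x|<t_k/2$, which is $O(t_k^{-1})$. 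The part with $x\sim t_k$ is bounded by $t_k^{-1}\int\frac{1}{|x-t_k|}\min\{1,\cdot\}$ over a dyadic range, which is $O(t_k^{-1}\log t_k)$. The part with $x\gg t_k$ is $O(1)$. This gives $|R_k|\lesssim1$. The derivative $t_k\partial_{t_k}R_k$ contains the kernel $x^2t_k^2/(t_k^2-x^2)^2$, which looks nonintegrable near $t_k$. However, $|\phi_{\bm t}|^p\lesssim t_k^{-2}|x-t_k|^p$ there, and $p>1$ makes $|x-t_k|^{p-2}$ integrable. The remaining ranges are estimated exactly as before with one extra factor $t_k^2/|t_k^2-x^2|$, which is bounded except near $t_k$.

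I expect the main obstacle to be the uniform treatment of the local window in \eqref{eq:local-product-envelope}, namely ensuring that the implicit constants depend only on $\|\bm t-\bm c\|_2$, the separation, and the distance from zero.
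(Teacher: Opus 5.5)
Your proposal is correct and follows essentially the same route as the paper. The paper factors $\phi_{\bm t}=\kappa(\bm t)^{-1/p}E_p\prod_n\bigl(1+\frac{t_n-c_n}{c_n-z}\bigr)\bigl(1+\frac{t_n-c_n}{c_n+z}\bigr)$ to compare $|\phi_{\bm t}|$ with $|E_p|$ times a ratio of local distances, and reads off the envelope and derivative bounds; it then bounds $\partial_{t_k}z_k=\kappa(\bm t)\bigl[R_k+2(p-1)t_k^2\int|\phi_{\bm t}(x)|^px^2(t_k^2-x^2)^{-2}\dd x\bigr]$ via the envelope, and your Cauchy--Schwarz and finite-window argument supplies the details the paper leaves implicit.

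The only point to tighten is the $t_k\partial_{t_k}R_k$ term. The factor $t_k^2/|t_k^2-x^2|$ has size $t_k/|x-t_k|$ on the whole range $t_k/2<|x|<2t_k$, not just within distance $O(1)$ of $t_k$. On that range, bound the integrand directly by $C\min\{1,|x-t_k|^p\}/(x-t_k)^2$, which is integrable since $p>1$. Reusing your $O(t_k^{-1}\log t_k)$ estimate there would only give $O(\log t_k)$.
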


\begin{proof}
Write $a_n\coloneqq t_n-c_n$ so that
\begin{equation}\label{eq:normalized-factorization}
 \phi_{\bm{t}}(z)={\kappa(\bm t)^{-1/p}}E_p(z)
 \prod_{n\geq1}
 \left(1+\frac{a_n}{c_n-z}\right)
 \left(1+\frac{a_n}{c_n+z}\right).
\end{equation}
Hence
\[ |\phi_{\bm t}(x)|\asymp \frac{\dist(x,\{\pm t_n:n\geq1\})}{\dist(x,\{\pm c_n:n\geq1\})}|E_p(x)|,\]
where the implicit constants can taken to depend on the $\ell^2$-norm of $\bm c - \bm t$ and the separation of $\bm t$. Both \eqref{eq:local-product-envelope} and \eqref{eq:local-product-derivative} follow from this estimate. Starting from \eqref{eq:Jacobian}, we find that
\[ \partial_{t_k} z_k={\kappa(\bm t)}{\Bigg[}\int_{\R} \frac{|\phi_{\bm t}(x)|^p x^2}{t_k^2-x^2} \dd x
+2(p-1) t_k^2 \int_{\R} \frac{|\phi_{\bm t}(x)|^px^2}{(t_k^2-x^2)^2} \dd x{\Bigg]}.\]
Applying \eqref{eq:local-product-envelope}, we get \eqref{eq:diag}, with $C$ depending on the $\ell^2$-norm of $\bm c - \bm t$ and the separation of $\bm t$.
\end{proof}
We may now establish \eqref{eq:Cpformula}. We first note that, by \eqref{eq:ident1} and \eqref{eq:almost}, we have
\begin{equation} \label{eq:Cpcomp} \frac{1}{\mathscr C_p}= (p-1) \int_{\R} \frac{| \varphi_p(x)|^px^2}{(\tau_k-x)^2} \dd x .\end{equation} 
{Split \eqref{eq:normalized-factorization} into a finite head
and an $\ell^2$ tail, removing the nearest factors before estimating
the tail by Cauchy--Schwarz. Since $\tau_k-c_k\to0$,
Lemma~\ref{lem:Gamma-properties} gives, locally uniformly in $u$,}
\[ {|\varphi_p(\tau_k+u)|^p(\tau_k+u)^2\longrightarrow} \left[
 \frac{\Gamma\!\left(\frac12+\frac1p\right)^2}{\pi}
 \right]^p
 \prod_{n=1}^{\infty}
 \left(
 \frac{n+\frac1p-\frac12}{\tau_n(p)}
 \right)^{2p} {|\sin\pi u|^p}\]
{as $k\to\infty$. After division by $u^2$,
\eqref{eq:local-product-envelope} bounds the translated integrand by
$C_p\min(1,|u|^p)/u^2$, which is integrable for $p>1$.
Dominated convergence in \eqref{eq:Cpcomp} therefore gives}
\[ \frac{1}{\mathscr C_p}= (p-1)\left[
 \frac{\Gamma\!\left(\frac12+\frac1p\right)^2}{\pi}
 \right]^p
 \prod_{n=1}^{\infty}
 \left(
 \frac{n+\frac1p-\frac12}{\tau_n(p)}
 \right)^{2p} \int_{\R} \frac{|\sin \pi x|^p}{x^2} \dd x\]
The identity
\(\sum_{k\in\mathbb Z}(u+k)^{-2}=\pi^2/\sin^2(\pi u)\) gives
\[
 \int_{\R} \frac{|\sin \pi x|^p}{x^2} \dd x
 =\int_0^1\sin^p(\pi u)
   \sum_{k\in\mathbb Z}\frac{1}{(u+k)^2}\,\dd u
 =\pi^{3/2}
   \frac{\Gamma((p-1)/2)}{\Gamma(p/2)},
\]
and so we arrive at \eqref{eq:Cpformula}.

\subsection{Local \texorpdfstring{$\ell^2$}{l2} boundedness of \texorpdfstring{$\mathscr{Z}$}{Z}}

The next lemma gives the local continuity of $\mathscr Z$ that is
needed below. We will use the notation
\[ \mathcal B(\bm \tau;\rho)\coloneqq \{ \bm t: \norm{\bm \tau-\bm t}_2{<} \rho\}.\]
We see that for sufficiently small $\rho$, $\mathcal{B}(\bm \tau;\rho)$ will consist of admissible sequences.
\begin{lemma}\label{lem:residual-lipschitz}
There exists a $0<\rho_0\le \tfrac18$ and positive constant $C$ such that
\begin{equation}\label{eq:residual-two-point-Lipschitz}
 \|\mathscr Z(\mathbf t)-\mathscr Z(\mathbf s)\|_2
 \leq C \|\mathbf t-\mathbf s\|_2,
 \qquad \mathbf t,\mathbf s\in\mathcal B(\bm \tau;\rho_0).
\end{equation}
\end{lemma}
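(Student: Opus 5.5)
The plan is to deduce the Lipschitz bound from a uniform bound on the Jacobian $D\mathscr Z$ as an operator on $\ell^2$, via the mean value inequality along the segment $\bm s+\theta(\bm t-\bm s)$, $0\le\theta\le1$. The ball $\mathcal B(\bm\tau;\rho_0)$ is convex, so the segment stays inside it. Choose $\rho_0\le\frac18$ small compared with the separation of $\bm\tau$ and with $\tau_1$. Then every $\bm t$ in the ball is admissible, increasing, and uniformly separated and bounded away from zero, with $\|\bm t-\bm c\|_2\le\|\bm\tau-\bm c\|_2+\rho_0$. Lemma~\ref{lem:local-product-comparison} therefore applies with constants uniform over the ball. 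In particular \eqref{eq:local-product-envelope} and \eqref{eq:diag} hold uniformly, and $\kappa(\bm t)$ is uniformly bounded. Two further points need checking: that $\mathscr Z$ is Fréchet (or at least Gâteaux) differentiable along segments with derivative given by \eqref{eq:Jacobian}, and that $\mathscr Z\bm t\in\ell^2$. For the latter it suffices that $\mathscr Z\bm\tau=0$ and the derivative bound. Differentiation under the integral in $R_k$ is justified by \eqref{eq:help} and dominated convergence using the envelope \eqref{eq:local-product-envelope}.

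By \eqref{eq:diag}, the diagonal part of $D\mathscr Z(\bm t)$ is bounded uniformly on the ball. The off-diagonal entries are $2p\,(t_jz_k-t_kz_j)/(t_j^2-t_k^2)$. I would split them as
\[
 \frac{t_jz_k-t_kz_j}{t_j^2-t_k^2}=\frac{z_k-z_j}{2(t_j-t_k)}+\frac{z_k+z_j}{2(t_j+t_k)}.
\]
The key step is a uniform pointwise bound $|z_k|\le C$ on the ball. This should follow from \eqref{eq:local-product-envelope}: $t_kR_k$ is controlled by $\int|\phi_{\bm t}|^p x^2 t_k/|t_k^2-x^2|\dd x$. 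The singularity at $x=t_k$ is tamed by $|\phi_{\bm t}|^p\lesssim|x-t_k|^p$ and principal-value cancellation. Away from it, $|\phi_{\bm t}(x)|^px^2\lesssim (1+|x|)^{2-2}=O(1)$ since $\alpha p=2$, and the kernel $t_k/|t_k^2-x^2|$ gives only a logarithmic integral. That logarithm is dangerous, and I expect this to be the main obstacle: the crude bound gives $|z_k|\lesssim\log t_k$, and the Hilbert-matrix operator $(1/(t_j-t_k))$ is bounded on $\ell^2$ only when acting on an $\ell^\infty$-bounded structure that is itself $\ell^2$-compatible.

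To overcome this I would exploit that we only need boundedness near $\bm\tau$ where $z=\mathscr Z\bm t$ is small. A continuity/bootstrap device suffices. Let $M(\rho)=\sup\{\|D\mathscr Z(\bm t)\|:\bm t\in\mathcal B(\bm\tau;\rho)\}$, with the matrices interpreted through the formula. Write $D\mathscr Z(\bm t)=\mathrm{diag}+2p\,(\mathcal H_1 z+\mathcal H_2 z)$, where $\mathcal H_1$ and $\mathcal H_2$ are the linear maps $z\mapsto\bigl((z_k-z_j)/(2(t_j-t_k))\bigr)_{j\ne k}$ and the analogous Carleman-type term. By Hilbert's inequality, for a separated sequence $\bm t$ the matrices with entries $1/(t_j-t_k)$ and $1/(t_j+t_k)$ are bounded on $\ell^2$. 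Hence the operator norm of the off-diagonal part acting on a vector $v$ is bounded by $C\|z\|_\infty\|v\|_2$ plus $C\|z\, v\|$-type terms, all controlled by $\|z\|_\infty\le\|z\|_2$. Since $z=\mathscr Z\bm t-\mathscr Z\bm\tau$, integrating along the segment from $\bm\tau$ gives $\|z\|_2\le M(\rho)\rho$. Thus $M(\rho)\le C_0+C_1M(\rho)\rho$, which for $\rho_0<1/(2C_1)$ yields $M\le 2C_0$. Making this rigorous requires first knowing $M(\rho)<\infty$ for some tiny $\rho$, or running the argument on finite truncations $(t_1,\dots,t_N,\tau_{N+1},\dots)$ where everything is finite, with constants independent of $N$, and then passing to the limit. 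The mean value inequality then yields \eqref{eq:residual-two-point-Lipschitz} with $C=2C_0$.
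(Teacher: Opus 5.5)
Your proposal is correct and follows essentially the paper's proof. Both arguments split the off-diagonal entries the same way, use Montgomery--Vaughan/Hilbert's inequality together with \eqref{eq:diag} to bound the derivative by $C(1+\|z\|_\infty)\le C(1+\|z\|_2)$, absorb using $\mathscr Z\bm\tau=0$ and small $\rho_0$, work with finitely supported perturbations to make the absorption legitimate, and pass to general $\bm t,\bm s$ by a limiting argument. The only difference is bookkeeping: the paper bootstraps on $\sup_s\|P_m\mathscr Z(\bm\tau+s\bm h)\|_2$, with the output projected onto the same finite block that supports $\bm h$, so a priori finiteness is automatic; your ball-wide supremum of Jacobian norms instead needs finiteness on truncations, which your crude bound $|z_k|\lesssim\log t_k$ does supply.
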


\begin{proof}
{Choose $0<\rho\le1/8$ so that $\mathcal B(\bm\tau;\rho)$
consists of sequences uniformly separated from each other and zero. Let $\bm h$ have
finite support with $\bm\tau+\bm h\in\mathcal B(\bm\tau;\rho)$.} Set $\mathbf t(s)=\boldsymbol\tau+s\bm h$, and choose $m$ such that the support of $h$ is contained in $\{1,\ldots, m\}$.  Let $P_m$ denote projection onto the first $m$
coordinates in $\ell^2$. Then
\begin{equation} \label{eq:FTA}  P_m \mathscr Z (\bm \tau+\bm h)= \int_0^1 \frac{\dd}{\dd s }
P_m \mathscr Z (\bm t(s))\dd s\end{equation}
since $\mathscr Z(\bm \tau)=0$. The derivative in the integral is
\[ \big(\frac{\dd}{\dd s }
P_m \mathscr Z (\bm t(s))\big)_k=\sum_{j=1}^m \partial_{t_j}z_k h_j. \]
By \eqref{eq:Jacobian}, we have, for $j\neq k$,
\[ \partial_{t_j}z_k=p\left(\Big(\frac{1}{t_j-t_k}+\frac{1}{t_j+t_k}\Big)z_k 
{-}\Big(\frac{1}{t_j-t_k}-\frac{1}{t_j+t_k}\Big)z_j\right).\]
Applying Montgomery and Vaughan's generalization of Hilbert's inequality \cite[Theorem 2]{MontgomeryVaughan} and taking into account \eqref{eq:diag}, we therefore find that
\[
 \left\|\frac{\dd}{\dd s}P_m\mathscr Z(\mathbf t(s))\right\|_2
 \leq C_p\|h\|_2
 \left\{1+\|P_m\mathscr Z(\mathbf t(s))\|_2\right\}.
\]
{This holds uniformly in $s$ because $\bm t(s)\in\mathcal B(\bm\tau;\rho)$.
Choose $\rho_0=\min(\rho,1/(4C_p))$ and $\|h\|_2<\rho_0$.
Apply \eqref{eq:FTA} up to each $s\in[0,1]$, take the supremum
over $s$, and absorb $C_p\|h\|_2\sup_s\|P_m\mathscr Z(\bm t(s))\|_2$.
Then we get}
\begin{equation}\label{eq:finite-direction-residual-bound}
 \|\mathscr Z(\boldsymbol\tau+\bm h)\|_2\leq {\frac{4C_p}{3}} \|h\|_2,
\end{equation}
on letting $m$ tend to $\infty$. 

For finitely supported $\bm h$ and $\bm g$, we apply the
same argument to the segment joining $\boldsymbol\tau+\bm h$ to
$\boldsymbol\tau+\bm g$ and get
\begin{equation}\label{eq:finite-two-point-residual-bound}
 \|\mathscr Z(\boldsymbol\tau+\bm h)-\mathscr Z(\boldsymbol\tau+\bm g)\|_2
 \leq C_p\|\bm h-\bm g\|_2.
\end{equation}
For an arbitrary $\bm h=(h_n)$ in $\ell^2$, let $\bm h_k$ be the truncated sequence whose entries are $h_1,\ldots, h_k, 0, 0, \cdots$. The bound in \eqref{eq:finite-two-point-residual-bound} shows that ${\mathscr Z(\boldsymbol\tau+\bm h_k)}$ is a Cauchy sequence in $\ell^2$. The coordinate-wise limit of this Cauchy sequence is $\mathscr Z(\boldsymbol\tau+\bm h)$ according to Lemma~\ref{lem:local-product-comparison}, and so  we obtain \eqref{eq:residual-two-point-Lipschitz}.
\end{proof}


\subsection{The diagonal of the Jacobian of \texorpdfstring{$\mathscr{Z}$}{Z}} The following identity gives the diagonal of the matrix $(\partial_{t_j} z_k)$ from its
off-diagonal entries. Note that the identity reduces to \eqref{eq:normalized-scalar-hessian} when $\bm t =\bm \tau$.

\begin{lemma}\label{lem:squared-zero-shift}
Let $\bm t$ be a sequence in $\mathcal{B}(\bm \tau,\rho_0)$. Then  
\begin{equation}\label{eq:diagonal-residual-formula}
 \partial_{t_k} z_k =\lambda_p+2\frac{z_k}{t_k}+2p\left\{
 \sum_{j\geq1}\frac{z_j}{t_j}-t_k^2\sum_{j\ne k}
 \frac{\frac{z_k}{t_k}-\frac{z_j}{t_j}}{t_j^2-t_k^2}\right\}.
\end{equation}
\end{lemma}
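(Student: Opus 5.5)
The plan is to compute $t_k\partial_{t_k}R_k(\bm t)$ by an integration by parts against the dilation field $x\,\frac{\dd}{\dd x}$. This expresses it through the off-diagonal quantities $R_j$ plus one "defect" term coming from the tail of the zero set. I would then show that this defect, after multiplication by $\kappa(\bm t)$, does not depend on $\bm t$, and evaluate it at $\bm t=\bm\tau$, where Lemma~\ref{lem:normalized-zero-algebra} already gives the answer $\lambda_p$.

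\emph{Step 1: integration by parts.} By Leibniz's rule and \eqref{eq:help},
\[
 t_k\partial_{t_k}R_k=\int_\R|\phi_{\bm t}(x)|^p\frac{x^2(2px^2-2t_k^2)}{(t_k^2-x^2)^2}\dd x .
\]
The integrand is integrable because $\phi_{\bm t}$ vanishes at $\pm t_k$. On the other hand, $x\frac{\dd}{\dd x}|\phi_{\bm t}|^p=-2p|\phi_{\bm t}|^p\sum_n \frac{x^2}{t_n^2-x^2}$ pointwise, and this series converges for fixed $x$. I would integrate $x\frac{\dd}{\dd x}\bigl[|\phi_{\bm t}|^p\frac{x^2}{t_k^2-x^2}\bigr]$ over $\R$. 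The boundary terms vanish since $|\phi_{\bm t}(x)|^p\lesssim |x|^{-2}$ by \eqref{eq:local-product-envelope}, so the left side equals $-R_k$. The $n=k$ term of the logarithmic derivative, together with $x\frac{\dd}{\dd x}\frac{x^2}{t_k^2-x^2}=\frac{2t_k^2x^2}{(t_k^2-x^2)^2}$, reproduces exactly $-t_k\partial_{t_k}R_k$. Formally this gives
\[
 t_k\partial_{t_k}R_k=R_k-2p\sum_{n\ne k}\int_\R|\phi_{\bm t}|^p\frac{x^4}{(t_k^2-x^2)(t_n^2-x^2)}\dd x .
\]
Each summand is evaluated by the partial-fraction identity used for \eqref{eq:offdiag-Jacobian}, namely $\frac{t_k^2R_k-t_n^2R_n}{t_n^2-t_k^2}$. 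Inserting $t_k^2R_k-t_n^2R_n=t_k^2(R_k-R_n)+(t_k^2-t_n^2)R_n$ and $z_j/t_j=\kappa R_j$ into \eqref{eq:Jacobian} gives exactly the right side of \eqref{eq:diagonal-residual-formula}, except for the term $\lambda_p$.

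\emph{Step 2: the defect, which is the main obstacle.} The interchange of $\sum_n$ and $\int$ is not legitimate. For large $n$, the $n$th integrand behaves like $|\phi_{\bm t}|^p\frac{x^4}{t_k^2-x^2}\,t_n^{-2}\approx -|\phi_{\bm t}|^px^2/t_n^2$, and $x^2|\phi_{\bm t}|^p$ is bounded but not integrable. So I would truncate at $N$, keep the tail $T_N(x)=\sum_{n>N}\frac{x^2}{t_n^2-x^2}$ inside the integral, and compute
\[
 \delta(\bm t)\coloneqq\lim_{N\to\infty}\Bigl(2p\int_\R|\phi_{\bm t}|^p\frac{x^2}{t_k^2-x^2}T_N(x)\dd x-(\text{the matching part of }2p\textstyle\sum_{n>N}\kappa^{-1}z_n/t_n)\Bigr).
\]
Since $\bm t-\bm c\in\ell^2$, one has $T_N(x)\approx \frac{1}{2}\sum_{n>N}\bigl(\frac{x}{c_n-x}-\frac{x}{c_n+x}\bigr)$ up to an $o(1)$ error. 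The mass of the integral sits at $|x|\asymp N$. Rescaling $x=Nu$ and using that $x^2|\phi_{\bm t}(x)|^p$ equals $\kappa(\bm t)^{-1}$ times the universal oscillating profile $[\Gamma(1+\gamma)^2/\pi]^p\,|m_p\sin\pi(x-\gamma)|^p$ from \eqref{eq:Gamma-phase} and \eqref{eq:normalized-factorization}, up to an $\ell^2$-small error, the sin factor averages to its mean. The limit is then $\kappa(\bm t)^{-1}$ times a constant depending only on $p$, independent of $k$ and $\bm t$. The $z_n$ tail sums converge since $(z_n)\in\ell^2$ by Lemma~\ref{lem:residual-lipschitz}, so they do not contribute to the limit. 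I expect this step to need the most care: one must justify the averaging, control the $\ell^2$ perturbation of the zeros uniformly in $N$ using the Hilbert-type bounds of Montgomery--Vaughan as in Lemma~\ref{lem:residual-lipschitz}, and handle the conditional convergence near $x\approx t_n$.

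\emph{Step 3: identification of the constant.} Multiplying by $\kappa(\bm t)$ yields \eqref{eq:diagonal-residual-formula} with $\lambda_p$ replaced by a constant $\Lambda_p$ independent of $\bm t$. At $\bm t=\bm\tau$ all $z_j$ vanish, and Lemma~\ref{lem:normalized-zero-algebra} gives $\partial_{t_k}z_k=\lambda_p$. Hence $\Lambda_p=\lambda_p$, which completes the proof.
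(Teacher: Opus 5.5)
Your proposal is correct in outline, but it takes a genuinely different route from the paper.

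\textbf{Your route.} Your Step~1 is the Euler identity for the homogeneity $R_k(s\bm t)=sR_k(\bm t)$, which formally reads $\sum_n t_n\partial_{t_n}R_k=R_k$. Your algebra turning this into \eqref{eq:diagonal-residual-formula} without the term $\lambda_p$ is right. The defect exists precisely because the dilation direction $(t_n)$ is not in $\ell^2$, so this row sum is not a directional derivative.

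Your Step~2 heuristic also gives the correct value. With $x=Nu$, the smooth part of $T_N$ is $Nh(u)$, where $h(u)=\frac{u}{2}\log\frac{1+u}{|1-u|}$ for $u>0$, extended evenly. Since $\int_\R h(u)u^{-2}\dd u=\pi^2/2$, the defect contributes $p\pi^2\mu_p$ to $\partial_{t_k}z_k$, with $\mu_p=[\Gamma(1+\gamma)^2/\pi]^p\int_0^1|\sin\pi u|^p\dd u$. By \eqref{eq:Cpformula} this is exactly $\lambda_p$, which confirms Step~3. Conversely, evaluating the constant explicitly and comparing with Step~3 reproves \eqref{eq:Cpformula}.

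\textbf{The paper's route.} The paper avoids the anomaly altogether. It tests $A=D\mathscr Z(\bm t)$ on $\bm v=(1/t_j)$, realized by the deformation $t_j^2\mapsto t_j^2+\varepsilon$. This deformation acts as $F_{\bm t}(x)\mapsto F_{\bm t}(\sqrt{x^2-\varepsilon})$, where $F_{\bm t}=\kappa(\bm t)^{1/p}\phi_{\bm t}$. Because this direction lies in $\ell^2$, $\sum_j A_{kj}/t_j$ is an honest directional derivative. One integration by parts then gives $t_k(A\bm v)_k=\norm{F_{\bm t}}_p^p+2z_k/t_k$ with no anomaly. The off-diagonal part $t_k(A^\circ\bm v)_k$ is explicit from \eqref{eq:Jacobian}. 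Finally, $\norm{F_{\bm t}}_p^p$ is traded for $\lambda_p+2p\sum_j z_j/t_j$ by a conservation law: $\norm{F_{\bm t}}_p^p-2p\sum_j z_j/t_j$ has vanishing partial derivatives, which follows from the symmetry of $A$.

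\textbf{Trade-off.} The paper's argument is soft and needs no oscillatory asymptotics. Yours explains $\lambda_p$ as a scaling anomaly and yields the constant explicitly, but it puts all the analytic weight on Step~2. If you carry Step~2 out, two points need attention. First, replacing $t_n$ by $c_n$ in $T_N$ is not an $o(1)$ error near the zeros. The cotangent-type part of $T_N$ near its poles must be averaged only after pairing with $|\phi_{\bm t}|^p$. There it drops out, because $|\sin\pi u|^p\cot\pi u=\frac1{p\pi}\frac{\dd}{\dd u}|\sin\pi u|^p$ has mean zero. You still need to check that the perturbation $t_n-c_n$ contributes only errors that vanish as $N\to\infty$; the $\ell^2$ bound allows this. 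Second, near $t_n$ your integrand is $O(|x-t_n|^{p-1})$, so the only conditional convergence is at infinity.
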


\begin{proof}
Let $A$ be the matrix with entries $A_{k,j}\coloneqq \partial_{t_j} z_k$, and let $A^\circ$ be the off-diagonal part of $A$. Set $\bm v\coloneqq (1/t_j)$. We will identify $A \bm v$ so that
\[ \partial_{t_k} z_k = A_{kk} = t_k (A \bm v)_k-t_k(A^\circ \bm v)_k.\]
In view of \eqref{eq:Jacobian},
\[ (A^\circ \bm v)_k =2p \sum_{j\neq k} \frac{t_jz_k-t_kz_j}{t_j^2-t_k^2} \frac{1}{t_j} ,\]
and so our task is to show that
\begin{equation}\label{eq:task} t_k (A \bm v)_k =\lambda_p+2\frac{z_k}{t_k}+2p
 \sum_{j\geq1}\frac{z_j}{t_j} .\end{equation}

We use the sequence  $v_j=t_j^{-1}$ because we rely on a computation involving  the deformation
\[
 t_j(\varepsilon)^2=t_j^2+\varepsilon,
 \qquad t_j'(0)=\frac1{2t_j}.
\]
Indeed, we may write 
\begin{equation} \label{eq:Jacobianv} (A\bm v)_k
 =2 \left.\frac{\dd}{\dd\varepsilon}
       \bigl(t_k(\varepsilon)r_k(\varepsilon)\bigr)
       \right|_{\varepsilon=0},
\end{equation}
where 
\[
 r_k(\varepsilon)\coloneqq \int_{\R}|F_{\bm t(\varepsilon)}(x)|^p
                         \frac{x^2}{t_k^2+\varepsilon-x^2}\dd x, \qquad F_{\bm t}(x)\coloneqq \kappa(\bm t)^{\frac{1}{p}} \phi_{\bm t}(x).
\]
To compute the right-hand side of \eqref{eq:Jacobianv}, we put $G(x)\coloneqq |F_{\bm t}(x)|^p$ and note that $F_{\bm t(\varepsilon)}(x)
 =F_{\bm t}\bigl((x^2-\varepsilon)^{1/2}\bigr)$
Differentiation and
integration by parts give
\begin{align*}
 r_k'(0)
 &=-\frac12\int_{\R}\frac{xG'(x)}{t_k^2-x^2}\dd x
   -\int_{\R}\frac{x^2G(x)}{(t_k^2-x^2)^2}\dd x\\
 &=\frac12\int_{\R}\frac{G(x)}{t_k^2-x^2}\dd x
  =\frac{\|F_{\bm t}\|_p^p+z_k/t_k}{2t_k^2}.
\end{align*}
Taking into account \eqref{eq:Jacobianv}, we therefore have 
\begin{equation}\label{eq:stop}
 t_k (A\bm v)_k
 = \|F_{\bm t}\|_p^p+2 \frac{z_k}{t_k}.
\end{equation}

We now consider the function
\[ R(\bm t)\coloneqq \| F_{\bm t} \|_p^p - 2p \sum_{j\ge 1} \frac{z_j}{t_j}\]
for which we have $R(\bm \tau)=\lambda_p$. Therefore it suffices to show that $R(\bm t)$ is constant for $\bm t$ in $\mathcal{B}(\bm \tau,\rho_0)$. {By continuity it suffices to check constancy on finite-coordinate
deformations, so it is enough that all partial derivatives vanish.
The differentiated sums below are justified by the local operator bounds
and $(1/t_j)\in\ell^2$; the matrix $A$ is symmetric by \eqref{eq:Jacobian}.} We find that
\begin{equation}\label{eq:incred}
 \partial_{t_k} \sum_{j\ge 1} \frac{z_j}{t_j}
 =\sum_{j\geq1}\frac{A_{j k}}{t_j}
      -\frac{z_k}{t_k^2}\\
 =(A\bm v)_k-\frac{z_k}{t_k^2}=\frac{\| F_{\bm t}\|_p^p}{t_k}+\frac{z_k}{t_k^2}, \end{equation}
where, in the last step, we used \eqref{eq:stop}. On the other hand, differentiation of the normalized product gives
\[
 \partial_{t_k}|F_{\bm t}(x)|^p
 =\frac{2p\,t_k}{t_k^2-x^2}|F_{\bm t}(x)|^p, 
\]
and so
\[
 \partial_{t_k} \|F_{\bm t} \|_p^p
 = 2p \Big( \frac{\| F_{\bm t}\|_p^p}{t_k} + \frac{z_k}{t_k^2}\Big).
\]
Combining this with \eqref{eq:incred}, we find that $R(\bm t)\equiv \lambda_p$ for $\bm t$ in $\mathcal{B}(\bm \tau,\rho_0)$, and so we conclude that \eqref{eq:task} holds.
\end{proof} 

\subsection{A local inverse of \texorpdfstring{$\mathscr Z$}{Z}}

We next establish the local invertibility of $\mathscr Z$. Here  $X:=\ell^\infty_1$ denotes the space of sequences $\bm a =(a_j)$ satisfying $\sup_j |a_j|j<\infty$. 

\begin{lemma}\label{prop:local-residual-inverse}
There exist positive numbers $\rho_0, \sigma_0$ such that  
\begin{equation}\label{eq:local-residual-diffeomorphism}
 \mathscr Z: \mathcal{B}(\bm \tau,\rho_0)\longrightarrow V:= \mathscr Z(\mathcal{B}(\bm \tau, \rho_0))
\end{equation}
 is a diffeomorphism  and  $B(\bm 0, \sigma_0) \subset V$. Moreover, 
if $\mathscr Z \bm t$ is in $\ell_1^\infty$, then $D\mathscr Z(\mathbf t)^{-1}$ maps $\ell^\infty_1$ into $\ell^\infty_1$, with $\|D\mathscr Z(\mathbf t)^{-1}\|_{\ell^\infty_1\to\ell^\infty_1}$ uniformly bounded on {$\bm t \in \mathcal{B}(\bm \tau, \rho_0)$ satisfying $\|\mathscr Z\bm t\|_{\ell^\infty_1}\le B$, for each fixed $B$}.


\end{lemma}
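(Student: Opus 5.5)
The plan is to apply the inverse function theorem in $\ell^2$ and then read off the $\ell^\infty_1$ statement from the structure of the Jacobian in Lemma~\ref{lem:normalized-zero-algebra}.

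\textbf{Step 1: $C^1$ regularity and the inverse function theorem.} First we check that $\mathscr Z$ is $C^1$ on $\mathcal B(\bm\tau;\rho)$ as a map into $\ell^2$, with $D\mathscr Z(\bm t)$ given by the matrix \eqref{eq:Jacobian}. The off-diagonal part is
\[
p\Big(\big(\tfrac1{t_j-t_k}+\tfrac1{t_j+t_k}\big)z_k-\big(\tfrac1{t_j-t_k}-\tfrac1{t_j+t_k}\big)z_j\Big).
\]
It is a sum of Hilbert-type operators composed with multiplication by $z_k$. By Montgomery--Vaughan it is bounded on $\ell^2$ with norm $\le C\|\mathscr Z\bm t\|_\infty\le C\|\mathscr Z\bm t\|_2$. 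By Lemma~\ref{lem:residual-lipschitz} and $\mathscr Z\bm\tau=0$, this is $\le C\|\bm t-\bm\tau\|_2$.

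The diagonal is given by Lemma~\ref{lem:squared-zero-shift}. The terms $2z_k/t_k$ and $2p\sum_j z_j/t_j$ are $O(\|\mathscr Z\bm t\|_2)$, using Cauchy--Schwarz with $(1/t_j)\in\ell^2$. The remaining sum $t_k^2\sum_{j\ne k}(z_k/t_k-z_j/t_j)/(t_j^2-t_k^2)$ is handled by writing it as Hilbert-type sums and using $t_k^2/(t_j^2-t_k^2)=\tfrac12\big(t_k/(t_j-t_k)-t_k/(t_j+t_k)\big)$. I expect this estimate, which must be uniform in $k$, to be the main obstacle. I would first try to bound it by $C\|\mathscr Z\bm t\|_\infty$ times a $\log$-free constant, exploiting the cancellation $z_k/t_k-z_j/t_j$. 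If that fails, I would use the $\ell^2$ smallness together with Montgomery--Vaughan row bounds.

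Granting this, $\|D\mathscr Z(\bm t)-\lambda_pI\|_{\ell^2\to\ell^2}\le C\|\bm t-\bm\tau\|_2$. Continuity of $D\mathscr Z$ follows from the same formulas together with the Lipschitz bound \eqref{eq:residual-two-point-Lipschitz}, with pointwise convergence of entries supplied by Lemma~\ref{lem:local-product-comparison}. After shrinking $\rho_0$, the standard inverse function theorem in the Banach space $\ell^2$ gives the diffeomorphism \eqref{eq:local-residual-diffeomorphism}. Since $\|D\mathscr Z-\lambda_pI\|\le\lambda_p/2$ on the ball, the quantitative form of the theorem gives $B(\bm 0,\sigma_0)\subset V$ with $\sigma_0=\lambda_p\rho_0/2$.

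\textbf{Step 2: the $\ell^\infty_1$ bound.} Now suppose $\|\mathscr Z\bm t\|_{\ell^\infty_1}\le B$, so $|z_j|\le B/j$. Write $D\mathscr Z(\bm t)=\lambda_pI+\Lambda+A^\circ$, where $\Lambda$ is the diagonal correction from \eqref{eq:diagonal-residual-formula} and $A^\circ$ is the off-diagonal part. The aim is to show that on $\ell^\infty_1$ the perturbation $\Lambda+A^\circ$ is small, or at least that $D\mathscr Z(\bm t)$ is boundedly invertible there.

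For $\bm a\in\ell^\infty_1$, the entry $(A^\circ\bm a)_k$ is bounded by
\[
C\sum_{j\ne k}\frac{B}{|j-k|}\Big(\frac1k\cdot\frac1j+\frac1j\cdot\frac1j\Big)\|\bm a\|
\lesssim B\,\frac{\log k}{k}\,\|\bm a\|,
\]
which loses a logarithm. To repair this I would not aim for a norm estimate. Instead I would use a Neumann-type argument: $D\mathscr Z^{-1}\bm b$ already exists in $\ell^2$ by Step 1, and the task is to upgrade it to $\ell^\infty_1$. Set $\bm a=D\mathscr Z^{-1}\bm b$ and read off
\[
a_k=\lambda_p^{-1}\big(b_k-\Lambda_k a_k-(A^\circ\bm a)_k\big).
\]
Using $\|\bm a\|_2\le C\|\bm b\|_2\le C\|\bm b\|_{\ell^\infty_1}$ and Cauchy--Schwarz on the Hilbert sums, this gives $|a_k|\lesssim k^{-1/2}$. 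Feeding this estimate back into the identity improves it, after finitely many iterations, to $|a_k|\lesssim 1/k$, with constants depending only on $B$ and $\rho_0$.

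Uniform boundedness of $\Lambda_k$ in $k$ comes from \eqref{eq:diag}. The lower bound $|\lambda_p+\Lambda_k|\ge\lambda_p/2$ comes from the smallness of $\Lambda$ established in Step 1.
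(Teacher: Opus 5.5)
Your proposal is correct. Step~1 is essentially the paper's argument: perturb $\lambda_pI$ in $\ell^2$, then apply the inverse function theorem. Your bound $\|D\mathscr Z(\bm t)-\lambda_pI\|_{2\to2}\le C\|\bm t-\bm\tau\|_2$, obtained from Montgomery--Vaughan together with $\|\mathscr Z\bm t\|_\infty\le\|\mathscr Z\bm t\|_2$, is just a quantitative form of the paper's ``$\to0$''. It also produces an explicit $\sigma_0=\lambda_p\rho_0/2$, which the paper leaves implicit. The diagonal term you flag as the main obstacle is harmless. The quantity $t_k\sum_{j\ne k}(t_j^2-t_k^2)^{-1}$ is uniformly bounded by separation. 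For the other sum, split into $j<k/2$, $k/2\le j\le 2k$ and $j>2k$ and apply Cauchy--Schwarz; this gives $|A_{kk}-\lambda_p|\le C\|\mathscr Z\bm t\|_2$.

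Step~2 is a genuinely different route. The paper stays in $\ell^\infty_1$ on the tail. It splits off an $N\times N$ head and shows that $T^\circ$ and $CQ^{-1}R$ have tail norm $O\bigl(B(\log N/N+N^{-1/2})\bigr)$. It then inverts the Schur complement by a Neumann series and reassembles $A^{-1}$ blockwise, with constants depending on $N(p,B)$.

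You instead take the $\ell^2$ solution $\bm a=A^{-1}\bm b$ and read off decay from $(\lambda_p+\Lambda_k)a_k=b_k-(A^\circ\bm a)_k$. This works, and in a single step: if $|z_j|\le B/j$, Cauchy--Schwarz on each row gives $|(A^\circ\bm a)_k|\le C_pB\|\bm a\|_2/k$.
\begin{itemize}
\item The term $z_k\sum_{j\ne k}t_ja_j/(t_j^2-t_k^2)$ is $O(B\|\bm a\|_2/k)$ by separation alone.
\item The term $t_k\sum_{j\ne k}z_ja_j/(t_j^2-t_k^2)$ is $O(B\|\bm a\|_2/k)$, because $\sum_{j\ne k}j^{-2}(t_j^2-t_k^2)^{-2}\lesssim k^{-4}$.
\end{itemize}
Since $\|\bm a\|_2\le C\|\bm b\|_2\le C\|\bm b\|_{\ell^\infty_1}$, you get $\|\bm a\|_{\ell^\infty_1}\le C_p(1+B)\|\bm b\|_{\ell^\infty_1}$ directly. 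The intermediate $k^{-1/2}$ stage and the iteration are unnecessary. Your route is shorter and yields an explicit linear dependence on $B$.

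One correction to your diagnosis. The direct $\ell^\infty_1$ estimate of $A^\circ$ does not lose a logarithm: the row sums of $\frac{k}{j}|A_{kj}|$ are $O(B)$. The real obstruction to a Neumann series in $\ell^\infty_1$ is that these row sums are not small. The columns $j<k/2$ contribute about $B\sum_j j^{-2}$, regardless of $k$. The paper removes this low-index contribution by splitting off the head, while your argument absorbs it through the $\ell^2$ inverse.
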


\begin{proof}
Lemma~\ref{lem:squared-zero-shift}
shows that the diagonal of $A=A(\bm t) \coloneqq D \mathscr Z \bm t$ is continuous in
$\ell^\infty$.  At $\boldsymbol\tau$, it equals
$\lambda_pI$ by Lemma~\ref{lem:normalized-zero-algebra}. We also have $A(\bm \tau)^\circ \bm \delta =0$, so we may write
\[ A(\bm t)^\circ\bm \delta=(A(\bm t)^\circ-A(\bm \tau)^\circ)\bm \delta .\]  A direct computation shows that $\|A(\bm t)^\circ-A(\bm \tau)^\circ\|_{2\to 2}\to 0$ when $ \bm t - \bm \tau  \to 0$ in $\ell^2$.  In total, we therefore find that
\begin{equation}\label{eq:differential-close-scalar}
 \|A (\mathbf t)-\lambda_pI\|_{2\to2}\longrightarrow0
 \qquad(\mathbf t -\bm \tau\to \bm 0 \text{ in }\ell^2).
\end{equation}
This implies that we may find $\rho$ such that
 \[ \|\lambda_p^{-1} A(\mathbf t)-I\|_{2\to2} \le \frac12, \qquad \bm t \in \mathcal B(\bm \tau, \rho).\]
We may therefore express the inverse of $A$ on $\ell^2$ as a convergent Neumann series,
\[ A^{-1} \bm \nu =\frac{1}{\lambda_p} \sum_{n=0}^\infty (I-\lambda_p^{-1} A)^n \bm \nu  \]
for every $\bm t$ in $\mathcal B(\bm \tau, \rho)$.
{Finite-coordinate differentiation and truncation, using the
locally continuous operator bounds above, show that $\mathscr Z$ is $C^1$.}
Since the Fr\'{e}chet derivative of $\bm t \to \mathscr Z \bm t$ is a bounded linear isomorphism in $\mathcal B(\bm \tau, \rho)$, the inverse function theorem \cite[p. 149]{Deimling} guarantees that there exists $0<\rho_0 \le  \rho$ such that  $\bm t \to \mathscr Z \bm t$ is invertible from $\mathcal B(\bm \tau, \rho_0)$ onto $V= \mathscr Z( B(\bm \tau, \rho_0))$. 

It remains to prove the assertion about how $D\mathscr Z(\mathbf t)^{-1}$ acts on $\ell^\infty_1$ when $\mathscr Z\bm t$ belongs to that space. 
We make a block decomposition of $A$:
\[
A=\begin{pmatrix} Q & R \\ C & T \end{pmatrix},
\]
where $Q$ is the $N\times N$ matrix $(A_{kj})_{k,j=1}^N$
and $N$ is a positive integer to be suitably chosen.
The estimate $\|A-\lambda_pI\|_{2\to2}\le\lambda_p/2$
also holds for $Q$, so $Q$ is invertible and
$\|Q^{-1}\|_{2\to2}\le2/\lambda_p$.
Since both $A$ and $Q$ are invertible, their Schur complement $T-CQ^{-1}R$  is
invertible.
Setting $W\coloneqq (T-C Q^{-1} R)^{-1}$ and following
\cite[pp.~18--19]{HornJohnson}, we may write the blockwise inversion of $A$ as
\[
 A^{-1}=\begin{pmatrix}
 Q^{-1}+Q^{-1}RWCQ^{-1} & -Q^{-1}RW \\
 -WCQ^{-1} & W
 \end{pmatrix}.
\]
Recall that  
$\|v\|_X=\sup_k k|v_k| <\infty$. Fix $B$ with $\|z\|_X\le B$, and write
$X_N=\{(v_k)_{k>N}:\sup_{k>N}k|v_k|<\infty\}$.
For $j\ne k$, \eqref{eq:Jacobian} and the uniform separation give
\[
 \frac{k}{j}|A_{kj}|
 \le C_pB\frac{1+(k/j)^2}{|j-k|(j+k)}.
\]
On summing over $j>N$, the ranges $j<k/2$,
$k/2\le j\le2k$ (with $j\ne k$), and $j>2k$ contribute at most
$C_pB/N$, $C_pB\log(2+k)/k$, and $C_pB/k$, respectively.
Consequently
\begin{equation}\label{eq:weighted-tail-row-estimate}
 \|T^\circ\|_{X_N\to X_N}
 \le\sup_{k>N}\sum_{\substack{j>N\\j\ne k}}
                 \frac{k}{j}|A_{kj}|
 \le C_pB\frac{1+\log N}{N}.
\end{equation}
We must also estimate  $CQ^{-1}R$.
For $j\le N<k$, the same entrywise bound gives
\[
 k|A_{kj}|\le C_pB\frac{j+k^2/j}{(k-j)(k+j)}
 \le C_pB\left(\frac1j+\frac1{k-j}\right).
\]
The sum of the squares of the last expression over $j\le N$ is
bounded by $C_pB^2$, uniformly in $k>N$ and $N$.
Cauchy--Schwarz therefore gives
$\|C\|_{\ell^2_N\to X_N}\le C_pB$.
On the other hand, $R$ is a compression of $A-\lambda_pI$, and
\[
 \|v\|_2^2\le\|v\|_{X_N}^2\sum_{k>N}k^{-2}
 \le N^{-1}\|v\|_{X_N}^2.
\]
Consequently
\[
 \|R\|_{X_N\to\ell^2_N}\le\frac{\lambda_p}{2\sqrt N},
 \qquad
 \|CQ^{-1}R\|_{X_N\to X_N}\le\frac{C_pB}{\sqrt N}.
\]
Let $\Delta$ be the diagonal of $T$ and put
$E=T^\circ-CQ^{-1}R$. Since $|A_{kk}|\ge\lambda_p/2$,
\eqref{eq:weighted-tail-row-estimate} implies
\[
 \|\Delta^{-1}E\|_{X_N\to X_N}
 \le\frac{C_pB}{\lambda_p}
       \left(\frac{1+\log N}{N}+\frac1{\sqrt N}\right).
\]
Choose $N=N(p,B)$ so that this is at most $1/2$.
The Neumann series
\[
 (T-CQ^{-1}R)^{-1}
 =\sum_{m\ge0}(-\Delta^{-1}E)^m\Delta^{-1}
 \quad\text{on }X_N
\]
then converges, with inverse norm at most $4/\lambda_p$.
The displayed blockwise formula now shows that $A^{-1}$ maps
$X$ into itself: $C$ maps the finite head into $X_N$, $R$ maps
$X_N$ into the finite head, and $W$ is bounded on $X_N$.
The norm comparisons on the head cost only constants depending on
$N$; since $N$ depends only on $p,B$, the resulting bound is
$\|A^{-1}g\|_X\le C_{p,B}\|g\|_X$, as required.
\end{proof}


\subsection{Starting point for the induction}

For a fixed integer $K\geq0$, let
\begin{equation}\label{eq:finite-comparison-sequence}
 \mathbf a^{(K)}=(\tau_1,\ldots,\tau_K,c_{K+1},c_{K+2},\ldots),
 \qquad f^{(K)}=\mathscr Z(\mathbf a^{(K)}).
\end{equation}
The next lemma gives the expansion that starts the induction.

\begin{lemma}\label{lem:finite-comparison-residual}
For every fixed $K$ and every $L\geq1$, there are real numbers
$\beta_{K,0},\ldots,\beta_{K,L-1}$ such that
\begin{equation}\label{eq:finite-comparison-residual}
 f_k^{(K)}=\sum_{q=0}^{L-1}\frac{\beta_{K,q}}{c_k^{2q+1}}
 +O_{p,K,L}(c_k^{-2L-1}),\qquad k>K.
\end{equation}
\end{lemma}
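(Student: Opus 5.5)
The plan is to write the integrand of $R_k(\mathbf a^{(K)})$ explicitly in terms of $E_p$ and then expand the resulting singular integral in powers of $c_k^{-1}$. Since $\mathbf a^{(K)}$ differs from $\mathbf c$ only in the first $K$ entries,
\[
 \phi_{\mathbf a^{(K)}}(x)=E_p(x)P_K(x),\qquad
 P_K(x)\coloneqq\prod_{n\le K}\frac{c_n^2}{\tau_n^2}\,\frac{\tau_n^2-x^2}{c_n^2-x^2}.
\]
Here $P_K$ is an even rational function which tends to the constant $\prod_{n\le K}c_n^2/\tau_n^2$ at infinity. Hence, for $|x|\ge X_0$, $|P_K(x)|^p$ is smooth and has a convergent expansion in even powers of $x^{-1}$. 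The prefactor satisfies $t_k\kappa(\mathbf a^{(K)})=c_k\kappa_K$ with $\kappa_K$ independent of $k$ for $k>K$. It therefore suffices to prove that $R_k$ has an asymptotic expansion in even powers $c_k^{-2q-2}$, $q\ge0$.

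First I split $\R$ with a smooth even cutoff into $|x|\le X_0$ and $|x|\ge X_0$. On the compact piece, $h(x)=x^2|\phi_{\mathbf a^{(K)}}(x)|^p$ is integrable and $c_k>2X_0$, so I expand $1/(c_k^2-x^2)=\sum_j x^{2j}c_k^{-2j-2}$. This gives an expansion in even powers of $c_k^{-1}$ whose coefficients are the moments of $h$, with a remainder of any prescribed order.

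On the tail I use \eqref{eq:Gamma-phase}. By evenness I reduce to $x>0$, where
\[
 x^2|\phi_{\mathbf a^{(K)}}(x)|^p = C_p\,g(x)\,|\sin\pi(x-\gamma)|^p,\qquad g\coloneqq m_p^p|P_K|^p .
\]
The crucial structural point is that $m_p(x)=x^\alpha\Gamma(x-\gamma)/\Gamma(x+\gamma+1)$ has a Fields expansion in powers of $x^{-2}$. This holds because the shift parameter $\tfrac12(-\gamma+\gamma+1-1)$ equals $0$. Consequently $g$ is a symbol, $g\sim\sum_m g_m x^{-2m}$, and its derivatives obey bounds as in \eqref{eq:m-symbol}.

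Next I insert the Fourier series $|\sin\pi u|^p=\sum_{m\ge0}d_m\cos(2\pi m u)$, which converges absolutely since $d_m=O(m^{-p-1})$. I also write
\[
 \frac1{c_k^2-x^2}=\frac{1}{2c_k}\Big(\frac1{c_k-x}+\frac1{c_k+x}\Big).
\]
The integrand of $R_k$ is not singular at $x=c_k$, because $c_k-\gamma=k$ is an integer and so the sine vanishes there. Hence I may pass to principal values termwise, since the singular parts cancel in the sum.

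For $m=0$ I treat the principal value Hilbert-type integral of the symbol $g$, evaluated at $c_k$. Expanding $g$ to order $L$ and computing each piece, for instance via $\int_{X_0}^\infty x^{-2j}(c_k\mp x)^{-1}\dd x$ or by Mellin transforms, yields an expansion in powers of $c_k^{-1}$. Logarithmic terms must cancel between the $\pm$ pieces and against the compact part; this is consistent because the full integrand is even in $x$.

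For $m\ge1$ I use that $\cos(2\pi m(c_k-\gamma))=1$. I write the principal value integral of $g(x)\cos(2\pi m(x-\gamma))/(c_k-x)$ as $g(c_k)$ times the exact Hilbert transform of the cosine, which equals $\pm\pi\sin(2\pi m(c_k-\gamma))=0$. What remains is an integral of $(g(x)-g(c_k))/(c_k-x)$ against an oscillatory factor. Repeated integration by parts turns this into terms built from derivatives $g^{(j)}(c_k)/m^{j}$ plus boundary terms at $X_0$, and these again have expansions in powers of $c_k^{-1}$. The factor $d_m$ makes the sum over $m$ converge uniformly.

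The main obstacle I anticipate is the parity claim. One must show that, after combining all contributions, only powers $c_k^{-2q-2}$ survive in $R_k$, with no odd powers and no $\log c_k$ terms, and with constants uniform in $m$. My first attempt will be to avoid tracking these cancellations term by term. Instead I would observe that $R_k$ is the restriction to $y=c_k$ of the even function
\[
 y\mapsto\int_\R x^2|\phi_{\mathbf a^{(K)}}(x)|^p\,\frac{1}{y^2-x^2}\,\dd x,
\]
taken in the principal value sense. I would then derive the expansion along the lattice $y\in\gamma+\mathbb Z$ from the symbol calculus above. Since the whole computation depends on $y$ only through $y^2$, apart from the lattice phase, which has been fixed to equal $1$, the expansion should come out in even powers automatically. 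Multiplying by $c_k\kappa_K$ then gives \eqref{eq:finite-comparison-residual}, with the remainder $O_{p,K,L}(c_k^{-2L-1})$ obtained by truncating the symbol expansion of $g$ at a sufficiently high order.
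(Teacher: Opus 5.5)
\textbf{There is a genuine gap, and it is the parity step you flagged.} The lemma does not just say that $f_k^{(K)}$ has an asymptotic expansion in powers of $c_k^{-1}$. It says that only odd powers $c_k^{-2q-1}$ occur, equivalently only $c_k^{-2q-2}$ and no $\log c_k$ in $R_k$. Your argument for this does not work.

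\begin{itemize}
\item Evenness of $y\mapsto\int_\R x^2|\phi_{\mathbf a^{(K)}}(x)|^p(y^2-x^2)^{-1}\,\mathrm dx$ says nothing about the parity of its expansion as $y\to+\infty$. For example, $1/(1+|y|)$ is even but contains every power of $y^{-1}$.
\item The lattice $\gamma+\mathbb Z$ is not symmetric under $y\mapsto-y$ unless $2\gamma\in\mathbb Z$. At the reflected point the phase is $\cos(2\pi m(c_k+\gamma))=\cos(4\pi m\gamma)$, not $1$.
\item Your $m\ge1$ step does not supply the parity either. Integrating $\frac{g(x)-g(c_k)}{c_k-x}\cos(2\pi m(x-\gamma))$ by parts on $[X_0,\infty)$ gives no local terms $g^{(j)}(c_k)/m^j$, since the integrand is smooth at $c_k$. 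It gives boundary terms at $X_0$ built from $g(c_k)$ and powers of $(c_k-X_0)^{-1}$. These expand in all powers of $c_k^{-1}$, and the cancellation of the odd ones against the $(c_k+x)^{-1}$ part is exactly what would have to be proved.
\end{itemize}
Your reduction $t_k\kappa(\mathbf a^{(K)})=c_k\kappa_K$ and the observation that $m_p$ expands in $x^{-2}$ are correct.

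\textbf{A second error, which is fixable.} Truncating the symbol expansion of $g$ does not make the remainder negligible. A remainder $r=O(x^{-2N})$ contributes to $R_k$ about
\[
c_k^{-2}\int r(x)|\sin\pi(x-\gamma)|^p\,\mathrm dx+c_k^{-4}\int x^2r(x)|\sin\pi(x-\gamma)|^p\,\mathrm dx+\cdots .
\]
So it feeds into every coefficient up to order $c_k^{-2N}$ through its moments, and these must be extracted by moment subtraction, as you did for the compact piece.

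\textbf{How the paper avoids both problems.} It computes exactly.
\begin{itemize}
\item Since $P(u)=|\sin\pi u|^p$ vanishes at $0$, it writes $P(u)=\sum_{m\ge1}b_m(\cos2\pi mu-1)$, so no principal values arise.
\item By contour integration it evaluates
\[
\int_0^\infty\frac{\cos(2\pi m(x-c))-1}{c^2-x^2}\,\mathrm dx=\frac{\sin(2\pi m\gamma)}{c}\int_0^\infty\frac{e^{-2\pi mcs}}{1+s^2}\,\mathrm ds .
\]
This visibly expands in $c^{-2q-2}$, with constants summable against $b_m$.
\item With $U_q=x^{2q}W_K-P(x-\gamma)\sum_{r\le q}d_{K,r}x^{2(q-r)}$ and $H_q(c)=\int_0^\infty U_q(x)(c^2-x^2)^{-1}\,\mathrm dx$, it uses the recursion
\[
H_q(c)=c^{-2}\Bigl\{\int_0^\infty U_q\,\mathrm dx+d_{K,q+1}\int_0^\infty\frac{P(x-\gamma)}{c^2-x^2}\,\mathrm dx+H_{q+1}(c)\Bigr\}.
\]
This reduces all symbol corrections to that single integral plus constant moments, each step costing a factor $c^{-2}$.
\item Together with $H_L(c)=O(c^{-2})$, this gives the even expansion of $R_k$ and hence the odd one of $f_k^{(K)}$.
\end{itemize}
In the paper the parity comes from these exact formulas, not from a symmetry argument.
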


\begin{proof}
For the sequence in \eqref{eq:finite-comparison-sequence},
\[
 F_K(z)\coloneqq E_p(z)Q_K(z),\qquad
 Q_K(z)\coloneqq \prod_{j\leq K}\frac{\tau_j^2-z^2}{c_j^2-z^2}.
\]
Set $P(u)\coloneqq |\sin\pi u|^p$ and $W_K(x)\coloneqq x^2|F_K(x)|^p$.  By Fields's expansion of the gamma quotient in
\eqref{eq:A-m} (see \cite{Fields, Frenzen}) and the fact that $Q_K$ is a rational function of $x^2$, we get, for every $L$,
\begin{equation}\label{eq:finite-comparison-amplitude}
 W_K(x)=P(x-\gamma)\left\{
 \sum_{r=0}^{L}d_{K,r}x^{-2r}
 +O_{p,K,L}(x^{-2L-2})\right\}.
\end{equation}
We write $P(u)$ in terms of an absolutely convergent Fourier series,
\[
 P(u)=b_0+\sum_{m\geq1}b_m\cos(2\pi mu),
 \qquad b_m=O_p(m^{-p-1}).
\]
Since $P(0)=0$,
$P(u)=\sum_{m\geq1}b_m\{\cos(2\pi mu)-1\}$.  We set
$c=c_k=k+\gamma$ and use partial fractions to get
\begin{align*} \int_0^\infty\frac{\cos({2\pi  m}(x-c))-1}{c^2-x^2}\dd x
 = &\frac{\cos({2\pi  m} c)}{2c}\int_{\R}\frac{\cos({2\pi  m} x)}{c-x}\dd x
+\frac{\sin({2\pi  m} c)}{2c} \int_{\R}\frac{\sin({2\pi  m} x)}{c-x}\dd x \\ & +
\frac{\sin({2\pi  m} c)}{c} \int_0^\infty \frac{\sin({2\pi  m} x)}{x+c}\dd x,\end{align*}
where the first two integrals on the right-hand side are to be understood in the principal value sense. By contour integration, we find that they cancel each other. Hence, using $(x+c)^{-1}=\int_0^\infty {\ee}^{-s(x+c)}\dd s$, we find that
\begin{equation}\label{eq:periodic-integral-identity}
 \int_0^\infty\frac{\cos({2\pi  m}(x-c))-1}{c^2-x^2}\dd x
 =\frac{\sin({2\pi  m} \gamma)}{c}
 \int_0^\infty\frac{e^{-{2\pi  m}cs}}{1+s^2}\dd s.
\end{equation}
The finite identity
\[
 \frac1{1+s^2}=\sum_{q=0}^{L-1}(-1)^qs^{2q}
 +(-1)^L\frac{s^{2L}}{1+s^2}
\]
shows that the last integral equals
\[
 \sum_{q=0}^{L-1}\frac{(-1)^q(2q)!}{({2\pi  m}c)^{2q+1}}
 +O_L(({2\pi  m}c)^{-2L-1}).
\]
The estimate $\sum_m|b_m|\log(2+m)<\infty$ justifies termwise integration.
It follows that
\begin{equation}\label{eq:periodic-integral-expansion}
 \int_0^\infty\frac{P(x-\gamma)}{c_k^2-x^2}\dd x
 =\sum_{q=0}^{L-1}\frac{\ell_q}{c_k^{2q+2}}
 +O_{p,L}(c_k^{-2L-2})
\end{equation}
for suitable constants $\ell_q$. 

It remains to estimate the contribution from the higher order terms in
\eqref{eq:finite-comparison-amplitude}.  Set
\[
 U_q(x)\coloneqq x^{2q}W_K(x)-P(x-\gamma)
 \sum_{r=0}^{q}d_{K,r}x^{2(q-r)},
 \qquad
 H_q(c)\coloneqq \int_0^\infty\frac{U_q(x)}{c^2-x^2}\dd x.
\]
Then $U_q$ is integrable and
$U_{q+1}=x^2U_q-d_{K,q+1}P(x-\gamma)$. Writing  $U_q=\frac{c^2-x^2}{c^2}U_q+\frac{x^2}{c^2}U_q$, we therefore get
\begin{equation}\label{eq:moment-recursion}
 H_q(c)=\frac1{c^2}\left\{
 \int_0^\infty U_q(x)\dd x+d_{K,q+1}
 \int_0^\infty\frac{P(x-\gamma)}{c^2-x^2}\dd x+H_{q+1}(c)
 \right\}.
\end{equation}
$L$ iterations of \eqref{eq:moment-recursion}, together with
\eqref{eq:periodic-integral-expansion}, yield a polynomial of degree $L$ in $c^{-2}$ with remainder 
$O(c^{-2L-2})$, given that $H_L(c)=O(c^{-2})$. To see that the latter bound holds, we split the integral that defines $H_L$ as $\int_0^{c/2}+\int_{c/2}^{3c/2}+\int_{3c/2}^\infty$.
The first and third integrals are both $O(c^{-2})$ because  $U_L$ is integrable. To deal with the middle integral, we use
$U_L(c+u)=O(c^{-2}P(u))$ and get that it is $O(c^{-3} \log(2+c))$. 

Finally, since $W_K(x)$ is an even function,
\[
 f_k^{(K)}=2c_k\int_0^\infty\frac{W_K(x)}{c_k^2-x^2}\dd x,
\]
and \eqref{eq:finite-comparison-residual} follows.
\end{proof}

\subsection{Weighted sequence spaces}
For $r\geq0$, we write
\[
 \ell_r^2\coloneqq \Big\{\bm u=(u_j): \ \|\bm u\|_{r,2}^{{2}}\coloneqq \sum_{j=1}^\infty |u_j|^2 j^{2r} < \infty \Big\}.
\]
Given an admissible sequence $\bm t$, we define, for every $M\ge 0$, the real Hilbert space $\mathcal H_M(\mathbf t)$ in the following way. A real sequence $\bm u=(u_j)$ belongs to $\mathcal H_M(\mathbf t)$ if there exist coefficients $a_0,\ldots , a_{M}$ such that the sequence $\bm \nu =(\nu_j)$ defined by
\begin{equation}\label{eq:HM-definition}
     \nu_j \coloneqq u_j-\sum_{k=0}^{M} \frac{a_k}{t_j^{2k+1}} \end{equation}
belongs to $\ell^2_{2M+1}$. We set
\[ \|u\|_{\mathcal H_M(\mathbf t)}^2\coloneqq \sum_{k=0}^M |a_k|^2 + \| \bm \nu \|_{2M+1,2}^2.\]

The following lemma will be useful, as we shall repeatedly change between expansions in $t_k^{-1}$ and in
$c_k^{-1}$.

\begin{lemma}\label{lem:change-of-sequence-variable}
If $t_j-c_j=O(c_j^{-1})$, then
$\mathcal H_0(\mathbf t)=\mathcal H_0(\mathbf c)$ with equivalent norms.
If $M\geq1$ and
\begin{equation}\label{eq:change-variable-assumption}
 \mathbf t-\mathbf c\in\mathcal H_{M-1}(\mathbf c),
\end{equation}
then
\begin{equation}\label{eq:change-variable-conclusion}
 \mathcal H_M(\mathbf t)=\mathcal H_M(\mathbf c)
\end{equation}
with equivalent norms.  {The constants are uniform when $\inf_j t_j/c_j$ is bounded
away from zero and $\|\mathbf t-\mathbf c\|_{\ell^\infty_1}$ for $M=0$,
or $\|\mathbf t-\mathbf c\|_{\mathcal H_{M-1}(\mathbf c)}$ for $M\ge1$,
is bounded.}
\end{lemma}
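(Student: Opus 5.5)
The plan is to compare the two bases $\{t_j^{-(2k+1)}\}_{k\le M}$ and $\{c_j^{-(2k+1)}\}_{k\le M}$ modulo $\ell^2_{2M+1}$. Concretely, each $t_j^{-(2k+1)}$ will be re-expanded as a combination of the $c_j^{-(2i+1)}$, $i\le M$, plus a remainder in $\ell^2_{2M+1}$, with coefficients and remainder norm bounded in terms of $\|\mathbf t-\mathbf c\|$. By symmetry the same argument then gives the reverse inclusion. Once this is done, the inclusion $\mathcal H_M(\mathbf t)\subset\mathcal H_M(\mathbf c)$ with a norm bound follows by linearity: the new coefficients are a triangular linear image of the old ones, and the new remainder is $\bm\nu$ plus a bounded multiple of remainders in $\ell^2_{2M+1}$.

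For $M=0$, write $t_j^{-1}-c_j^{-1}=(c_j-t_j)/(t_jc_j)$. Since $|t_j-c_j|\le Cc_j^{-1}$ and $t_j/c_j$ is bounded below, this is $O(j^{-3})$, which lies in $\ell^2_1$ because $\sum j^{2}j^{-6}<\infty$. Hence $a_0t_j^{-1}=a_0c_j^{-1}+\text{remainder}$ with $\|\text{remainder}\|_{1,2}\le C|a_0|$. Exchanging the roles of $\mathbf t$ and $\mathbf c$ gives the reverse inclusion, and norm equivalence follows.

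For $M\ge1$, write $t_j=c_j(1+\epsilon_j)$ with $\epsilon_j=(t_j-c_j)/c_j$. The hypothesis gives
\[
\epsilon_j=\sum_{i=0}^{M-1}b_ic_j^{-2i-2}+\mu_jc_j^{-1},\qquad \bm\mu\in\ell^2_{2M-1}.
\]
Expand $(1+\epsilon_j)^{-(2k+1)}$ by Taylor's formula to an order $n$ large enough that $|\epsilon_j|^{n}c_j^{-(2k+1)}$ lies in $\ell^2_{2M+1}$; since $\epsilon_j=O(j^{-2})$, the choice $n=M+1$ suffices. In the resulting finite polynomial in the $\epsilon_j$, substitute the expansion of $\epsilon_j$. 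Products of the purely polynomial parts give even powers $c_j^{-2m}$, so together with the prefactor $c_j^{-(2k+1)}$ they produce odd powers $c_j^{-(2m+1)}$. Terms with $m\le M$ are kept as coefficients. Terms with $m>M$ are $O(j^{-2M-3})$, which lies in $\ell^2_{2M+1}$.

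Every term containing at least one factor $\mu_j$ has the form $c_j^{-(2k+1)}\mu_jc_j^{-1}\times O(1)$. It therefore lies in $\ell^2_{2M+1}$ as soon as $j^{2M+1}j^{-2k-2}\mu_j\in\ell^2$, which needs $2M-2k-1\le 2M-1$. This holds for all $k\ge0$. Here the weight $\ell^2_{2M-1}$ in the hypothesis is exactly what is needed when $k=0$. All constants depend only on $\inf_j t_j/c_j$ and $\|\mathbf t-\mathbf c\|_{\mathcal H_{M-1}(\mathbf c)}$, which gives the uniformity statement.

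The reverse direction is the step requiring care. One must express $\mathbf c$ relative to $\mathbf t$: $c_j=t_j(1+\tilde\epsilon_j)$ with $\tilde\epsilon_j$ having an analogous expansion in powers of $t_j^{-1}$ and an $\ell^2_{2M-1}$ remainder. I would obtain this by applying the already-proven inclusion at level $M-1$, $\mathcal H_{M-1}(\mathbf c)\subset\mathcal H_{M-1}(\mathbf t)$, to $\mathbf t-\mathbf c$; formally this is an induction on $M$. Alternatively, one can note directly that the coefficient map is unipotent upper triangular in $k$, so it can be inverted explicitly. I expect this bookkeeping, together with checking that all mixed remainders stay in the right weighted space, to be the main obstacle; everything else is routine.
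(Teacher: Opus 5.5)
Your proposal is correct and follows essentially the paper's argument. Both write $t_j=c_j(1+u_j)$ with $u_j$ a combination of $c_j^{-2},\dots,c_j^{-2M}$ modulo $\ell^2_{2M}$, and Taylor-expand $t_j^{-(2k+1)}$ to obtain a unipotent upper-triangular change of basis modulo $\ell^2_{2M+1}$. The reverse inclusion is then just the inversion of that triangular matrix (your second option), so it is not really the obstacle you anticipate.
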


\begin{proof}
The first assertion is trivial. If 
\eqref{eq:change-variable-assumption} holds, then the quotient
$u_j\coloneqq (t_j-c_j)/c_j$ is a finite linear combination of
$c_j^{-2},\ldots,c_j^{-2M}$ modulo $\ell_{2M}^2$.  Expanding
\[
 t_j^{-(2k+1)}=c_j^{-(2k+1)}(1+u_j)^{-(2k+1)}
\]
to the order ${c_j^{-2M-1}}$, we get an upper-triangular change of basis with diagonal
entries one. Clearly, the sum of all higher order terms belongs to
$\ell_{2M+1}^2(\mathbf c)$. {The stated bounds control the coefficients, the weighted
Taylor remainders, and the inverse triangular change of basis, giving
the asserted uniform norm estimates.}
\end{proof}

The only estimate for singular sums that we need is the following elementary
identity. Here we use the notation
\[ (K_{\bm t}\bm u)_k\coloneqq \sum_{j\neq k} \frac{u_j}{t_j^2-t_k^2}.\]

\begin{lemma}\label{lem:moment-subtraction}
Let $\bm t$ be an admissible sequence satisfying  $t_k=c_k+O(c_k^{-1})$. If $\bm q$ is in $\ell^2_{2L}$ with $L\ge 1$, then 
{the moments}
\[
 \mu_\nu\coloneqq \sum_{j\ge 1}t_j^{2\nu}q_j,
 \qquad 0\leq\nu<L,
\]
converge absolutely and
\begin{equation}\label{eq:moment-subtraction}
 t_k\sum_{j\ne k}
 \frac{q_j}{t_j^2-t_k^2}
 =-\sum_{\nu=0}^{L-1}\frac{\mu_\nu}{t_k^{2\nu+1}}
 +\frac{Lq_k}{t_k}
 +\frac{(K_{\mathbf t}u)_k}{t_k^{2L-1}},
 \qquad u_j=t_j^{2L}q_j.
\end{equation}
\end{lemma}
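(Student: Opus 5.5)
The identity is purely algebraic. I would prove it by a finite geometric expansion of the kernel $1/(t_j^2-t_k^2)$ in powers of $t_j^2/t_k^2$, with an exact remainder, applied term by term. First, absolute convergence of the moments: since $t_j\asymp j$ and $\bm q\in\ell^2_{2L}$, for $\nu\le L-1$ Cauchy--Schwarz gives
\[
\sum_j t_j^{2\nu}|q_j|\le C\Big(\sum_j j^{4L}|q_j|^2\Big)^{1/2}\Big(\sum_j j^{4\nu-4L}\Big)^{1/2},
\]
and $4\nu-4L\le -4$. The same bound shows that the sequence $u_j=t_j^{2L}q_j$ lies in $\ell^2$. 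By the separation $t_j=c_j+O(c_j^{-1})$, the off-diagonal sum $(K_{\bm t}\bm u)_k$ is well defined, since $|t_j^2-t_k^2|\gtrsim |j-k|(j+k)$ and $\sum_{j\neq k}|j-k|^{-2}(j+k)^{-2}<\infty$.

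Next, the key algebraic step is the exact identity, valid for $j\neq k$:
\[
\frac{1}{t_j^2-t_k^2}=-\sum_{\nu=0}^{L-1}\frac{t_j^{2\nu}}{t_k^{2\nu+2}}+\frac{t_j^{2L}}{t_k^{2L}}\,\frac{1}{t_j^2-t_k^2}.
\]
This follows from $\frac{1}{x-y}+\sum_{\nu<L}\frac{x^\nu}{y^{\nu+1}}=\frac{x^L}{y^L(x-y)}$ with $x=t_j^2$, $y=t_k^2$, which one checks by telescoping. Multiply by $t_kq_j$ and sum over $j\neq k$. The remainder term gives exactly $t_k^{1-2L}(K_{\bm t}\bm u)_k$. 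The polynomial terms give
\[
-\sum_{\nu<L}t_k^{-2\nu-1}\sum_{j\ne k}t_j^{2\nu}q_j=-\sum_{\nu<L}\frac{\mu_\nu}{t_k^{2\nu+1}}+\sum_{\nu<L}\frac{t_k^{2\nu}q_k}{t_k^{2\nu+1}},
\]
where the missing $j=k$ term has been restored in each moment. The last sum equals $Lq_k/t_k$, which produces the term in \eqref{eq:moment-subtraction}.

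The only point needing care is the interchange of the finite $\nu$-sum with the infinite $j$-sum. This is legitimate because each of the finitely many series involved converges absolutely, by the first paragraph. So I expect no real obstacle; the mildest one is checking the absolute convergence of $(K_{\bm t}\bm u)_k$, which is where admissibility and separation enter.
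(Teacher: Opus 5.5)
Your proposal is correct and is essentially the paper's own proof. The paper likewise uses Cauchy--Schwarz for the absolute convergence of the moments, then the finite geometric expansion of $1/(a-b)$ with $a=t_j^2$, $b=t_k^2$, summed over $j\ne k$. You additionally spell out two details the paper leaves implicit: the restoration of the $j=k$ term in each moment, which produces $Lq_k/t_k$, and the absolute convergence of $(K_{\mathbf t}u)_k$.
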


\begin{proof}
Absolute convergence of the moments follows from the Cauchy--Schwarz inequality.  Writing
\[
 \frac1{a-b}=-\sum_{\nu=0}^{L-1}\frac{a^\nu}{b^{\nu+1}}
 +\frac{a^L}{b^L(a-b)}
\]
with $a=t_j^2$ and $b=t_k^2$  summing over $j\neq k$, we get \eqref{eq:moment-subtraction}.
\end{proof}

We now show that the differential preserves the spaces
\eqref{eq:HM-definition}. In what follows, we use the notation $A\coloneqq D\mathscr Z(\bm t)$.

\begin{lemma}\label{lem:differential-expansions}
Let $\mathbf t$ be {in $\mathcal B(\boldsymbol\tau;\rho_0)$} satisfying
$t_k=c_k+O(c_k^{-1})$, and suppose that $\mathscr Z(\bm t)$ is in $\mathcal H_M(\mathbf t)$. 

If $M=0$ and $\bm v$ is in $\ell^\infty_1$, then
\begin{equation}\label{eq:offdiag-preserves-base}
 A^\circ v\in\mathcal H_0(\mathbf t).
\end{equation}
If $M\geq1$ and $\bm v$ is in $\mathcal H_{M-1}(\mathbf t)$, then
\begin{equation}\label{eq:offdiag-raises-order}
 A^\circ v\in\mathcal H_M(\mathbf t).
\end{equation}
In addition,
\begin{equation}\label{eq:diagonal-even-expansion}
 ((A_{kk}-\lambda_p)/t_k)
 \in \mathcal{H}_M(\bm t),
\end{equation}
with the coefficient $a_0$ of $(1/t_j)$ being $0$. {The operator bounds are uniform when
$\|\mathscr Z(\mathbf t)\|_{\mathcal H_M(\mathbf t)}$
and $\sup_k k|t_k-c_k|$ are bounded.}
\end{lemma}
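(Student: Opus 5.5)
Write $z_k=t_k w_k$ with $\bm w\coloneqq(z_j/t_j)$, and use \eqref{eq:Jacobian} in the split form
\[
 A_{kj}=2p\,\frac{t_jt_k(w_k-w_j)}{t_j^2-t_k^2},\qquad j\ne k .
\]
Then $(A^\circ\bm v)_k=2p\,t_k\bigl\{w_k\sum_{j\ne k}\frac{t_jv_j}{t_j^2-t_k^2}-\sum_{j\ne k}\frac{t_jv_jw_j}{t_j^2-t_k^2}\bigr\}$. The whole proof reduces to expanding sums of the type $t_k\sum_{j\ne k}q_j/(t_j^2-t_k^2)$ with Lemma~\ref{lem:moment-subtraction}. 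The membership of $\bm z$ in $\mathcal H_M(\bm t)$ means
\[
 w_j=\sum_{i=0}^M a_i t_j^{-2i-2}+\nu_j/t_j,\qquad \bm\nu\in\ell^2_{2M+1}.
\]
So $\bm w$ is an even polynomial in $t_j^{-1}$ starting at order two, plus a remainder in $\ell^2_{2M+2}$.

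\textbf{Step 1: the diagonal.} By Lemma~\ref{lem:squared-zero-shift},
\[
 (A_{kk}-\lambda_p)/t_k=2w_k/t_k+2p\,S/t_k-2p\,t_k\sum_{j\ne k}\frac{w_k-w_j}{t_j^2-t_k^2},\qquad S=\sum_j w_j .
\]
$S$ is finite because $w_j=O(j^{-2})$. The terms $2w_k/t_k$ and $2pS/t_k$ already lie in $\mathcal H_M(\bm t)$, contributing coefficients at orders $t_k^{-1}$ and $t_k^{-3},\dots$.

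For the singular sum, insert the expansion of $\bm w$. For each polynomial term $w_j=t_j^{-2i-2}$ we use the exact identity
\[
 \frac{t_k^{-2i-2}-t_j^{-2i-2}}{t_j^2-t_k^2}=\frac{1}{t_j^2t_k^2}\sum_{m=0}^{i}t_j^{-2m}t_k^{-2(i-m)} .
\]
This turns the sum into convergent sums $\sum_{j\ne k}t_j^{-2m-2}$ times powers of $t_k^{-2}$. After multiplication by $t_k$, it gives a finite expansion in odd powers of $t_k^{-1}$ of order at least three, plus $O(t_k^{-2i-3})$ corrections from omitting $j=k$.

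The remainder part $\nu_j/t_j$ is handled by Lemma~\ref{lem:moment-subtraction} with $q_j=\nu_j/t_j$ and $L=M+1$. The moments give odd powers, the term $Lq_k/t_k$ is in $\ell^2_{2M+1}$, and the last term is $t_k^{-2M-1}(K_{\bm t}u)_k$ with $u\in\ell^2$. It lies in $\ell^2_{2M+1}$ because $K_{\bm t}$ is bounded on $\ell^2$. To see this, note $1/(t_j^2-t_k^2)=\frac{1}{t_j+t_k}\cdot\frac1{t_j-t_k}$ and apply the Montgomery--Vaughan inequality together with the uniform separation; this is where $t_k=c_k+O(c_k^{-1})$ is used. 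Tracking the $t_k^{-1}$ coefficient, the only contributions come from $2pS/t_k$ and from the leading ($m=0$, $i$-independent) sum-over-$j$ term $-2p\sum_j w_j$, and they cancel. This gives $a_0=0$.

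\textbf{Step 2: the off-diagonal.} Apply the same procedure to $\sum_{j\ne k}t_jv_j/(t_j^2-t_k^2)$ and $\sum_{j\ne k}t_jv_jw_j/(t_j^2-t_k^2)$.

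For $M=0$ and $\bm v\in\ell^\infty_1$, the sequence $t_jv_j$ is bounded. The second sum has $q_j=t_jv_jw_j=O(j^{-2})\in\ell^2_1$, so Lemma~\ref{lem:moment-subtraction} with $L=1$ gives $-\mu_0/t_k+O(\ell^2_1)$. The first sum is $O(\log k/k)$ via the Hilbert-type bound (as in \eqref{eq:weighted-tail-row-estimate}). Multiplying it by $t_kw_k=O(k^{-1})$ places it in $\ell^2_1$. Hence $A^\circ\bm v\in\mathcal H_0(\bm t)$.

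For $M\ge1$ and $\bm v\in\mathcal H_{M-1}(\bm t)$, write $t_jv_j$ as an even polynomial in $t_j^{-1}$ (constant term included) plus an $\ell^2_{2M-2}$ remainder. The polynomial terms are summed explicitly: $\sum_{j\ne k}t_j^{-2i}/(t_j^2-t_k^2)$ is computed through the partial-fraction identity above. For $i=0$ this is the delicate sum $\sum_{j\ne k}(t_j^2-t_k^2)^{-1}$. I would evaluate it by comparing with the exact value for $\bm c$, obtained from the logarithmic derivative of $E_p$ (digamma expansion), which gives an odd expansion in $c_k^{-1}$. The perturbation $\bm t-\bm c$ is then controlled by the same moment subtraction. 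The remainders are handled by Lemma~\ref{lem:moment-subtraction} with $L=M$. After multiplying by $t_kw_k$, which gains two orders, the result lands in $\mathcal H_M$. The $\bm w$-weighted sum already starts two orders higher and is treated the same way.

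\textbf{Main obstacle.} The hardest point is the order-counting in Step 2: each remainder must gain exactly two powers, and the $i=0$ sums $\sum_{j\ne k}(t_j^2-t_k^2)^{-1}$ must have a genuine odd expansion, for which I rely on the comparison with $\bm c$ and Lemma~\ref{lem:change-of-sequence-variable}. Uniformity of all bounds then follows because every constant depends only on $\|\mathscr Z(\bm t)\|_{\mathcal H_M(\bm t)}$, $\sup_k k|t_k-c_k|$, and the separation of $\bm t$.
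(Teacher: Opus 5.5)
\textbf{Step~1 is sound; Step~2 for $M\ge1$ has a genuine gap.} Your diagonal argument is essentially the paper's. (In your identity the $t_k^{-1}$ term comes from $m=i$, not $m=0$, but the cancellation against $2pS/t_k$ is as you say.) The problem in Step~2 is that you split $w_k-w_j$ and treat the two sums $z_k\sum_{j\ne k}t_jv_j/(t_j^2-t_k^2)$ and $t_k\sum_{j\ne k}t_jv_jw_j/(t_j^2-t_k^2)$ separately. In general neither sum lies in $\mathcal H_M(\bm t)$; only their difference does. Two things go wrong.

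First, the order counting fails. The remainder of $(t_jv_j)$ is only in $\ell^2_{2M-2}$, so Lemma~\ref{lem:moment-subtraction} is available only with $L\le M-1$, and not at all when $M=1$. Also, the prefactor $t_kw_k=z_k$ gains one order, not two. So the remainders fall short of $\ell^2_{2M+1}$, and the loss is real. Take $M=1$ and $v_j=j^{-3/2}/\log(j+1)$, which lies in $\mathcal H_0(\bm t)$. Then $\sum_{j\ne k}t_jv_j/(t_j^2-t_k^2)$ is of exact order $k^{-3/2}/\log k$. Hence the leading term $b_0t_j^{-1}$ of $\bm z$ contributes a term of order $b_0k^{-5/2}/\log k$ to your first piece. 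This is neither an odd polynomial in $t_k^{-1}$ nor in $\ell^2_3$. It is cancelled exactly by the second piece: together they give $2pb_0t_k^{-1}\sum_{j\ne k}v_j/t_j$.

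Second, the constant term of $t_jv_j$ forces you to expand $\sum_{j\ne k}(t_j^2-t_k^2)^{-1}$. From $t_k=c_k+O(c_k^{-1})$ alone, its deviation from the digamma value at $\bm c$ is merely an unstructured $O(k^{-2})$ sequence. Lemma~\ref{lem:change-of-sequence-variable} would need $\bm t-\bm c\in\mathcal H_{M-1}(\bm c)$, which is not a hypothesis of the lemma. Lemma~\ref{lem:moment-subtraction} handles decaying numerators, not perturbations of the kernel.

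\textbf{The fix is the idea you already used on the diagonal.} Keep $w_k-w_j$ (that is, $t_jz_k-t_kz_j$) together, and expand $\bm z$ rather than $\bm v$. For $z_j=t_j^{-2a-1}$, polynomial division makes the kernel finite rank:
\[
\frac{t_jt_k^{-2a-1}-t_kt_j^{-2a-1}}{t_j^2-t_k^2}=\sum_{\ell=0}^{a}t_j^{-2\ell-1}t_k^{-2(a-\ell)-1}.
\]
So this part of $A^\circ\bm v$ is a polynomial in $t_k^{-1}$ with constant coefficients $\sum_jt_j^{-2\ell-1}v_j$, minus the local term $2p(a+1)t_k^{-2a-2}v_k$. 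That local term gains the required two orders on $\bm v$, and no singular sum ever acts on the expansion of $\bm v$.

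Singular sums then arise only from the remainder $\bm\eta$ of $\bm z$:
\begin{itemize}
\item In $\eta_k\sum_{j\ne k}t_jv_j/(t_j^2-t_k^2)$, only boundedness of the sum is needed.
\item In $t_k\sum_{j\ne k}\eta_jv_j/(t_j^2-t_k^2)$, one has $\eta_jv_j\in\ell^2_{2M+2}$, so Lemma~\ref{lem:moment-subtraction} applies with $L=M+1$.
\end{itemize}
This is the paper's argument.

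A smaller point for $M=0$: $q_j=t_jv_jw_j=O(j^{-2})$ is not in $\ell^2_2$, so Lemma~\ref{lem:moment-subtraction} with $L=1$ does not literally apply. A direct bound $O(\log k/k^2)$ on the remainder rescues that case.
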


\begin{proof}
By the assumption on $z=\mathscr Z \bm t$, we may write
\begin{equation}\label{eq:start}
 z_j=\sum_{a=0}^{M}b_at_j^{-(2a+1)}+\eta_j,
 \qquad \bm\eta\in\ell_{2M+1}^2(\mathbf t).
\end{equation}
Hence, to prove \eqref{eq:offdiag-preserves-base} and \eqref{eq:offdiag-raises-order}, it suffices to show that $A_0^\circ \bm v, \ldots, A_M^\circ\bm v$, and $A_{\bm \eta}^\circ \bm v$ are all in $\mathcal H_M(\bm t)$, where  
\[ (A_a^\circ \bm v)_k=2p \sum_{j\neq k}  \frac{t_jt_k^{-2a-1}-t_k t_j^{-2a-1}}{t_j^2-t_k^2} v_j, \qquad 0\le a \le M,\]
and
\begin{equation}\label{eq:off}  (A_{\bm \eta}^\circ \bm v)_k=2p \sum_{j\neq k}  \frac{t_j\eta_k-t_k \eta_j}{t_j^2-t_k^2} v_j. \end{equation}
Polynomial division in
\eqref{eq:Jacobian} gives
\begin{align}\label{eq:pure-power-action}
 (A_a^\circ \bm v)_k
 = 2p\sum_{\ell=0}^{a}t_k^{-(2(a-\ell)+1)}
 \sum_{j\ge 1} t_j^{-(2\ell+1)}v_j
 {-2p(a+1)t_k^{-(2a+2)}v_k}.
\end{align}
{For $M=0$ the last term belongs to $\ell^2_1$ since
$v_j=O(c_j^{-1})$. For $M\ge1$, expand $v\in\mathcal H_{M-1}(\bm t)$:
multiplication by $t_k^{-2a-2}$ gives the required odd powers and a
remainder in $\ell^2_{2M+1}$. Thus $A_a^\circ\bm v\in\mathcal H_M(\bm t)$.}

To compute the contribution from the remainder $\eta_j$, we rewrite
\eqref{eq:off} as 
\begin{equation}\label{eq:remainder-offdiag-action}
 (A^\circ_\eta \bm v)_k
 =2p\eta_k\sum_{j\ne k}
 \frac{t_jv_j}{t_j^2-t_k^2}
 -2pt_k\sum_{j\ne k}
 \frac{\eta_jv_j}{t_j^2-t_k^2}.
\end{equation}
In both cases under consideration, $(t_jv_j)$ is bounded.  The first sum is
therefore uniformly bounded, and hence the first term
belongs to $\ell_{2M+1}^2(\mathbf t)$.  To estimate the second term, we apply
Lemma~\ref{lem:moment-subtraction} with
$q_j=\eta_jv_j$ and $L=M+1$. Indeed, since the sequence $(t_j v_j) $ is bounded, we have $\bm q$ in $\ell^2_{2L}$.
Formula \eqref{eq:moment-subtraction} gives the required odd powers and a
remainder in $\ell_{2M+1}^2$ since $K_{\bm t}$ is bounded on $\ell^2$ by Hilbert's inequality.  This proves
\eqref{eq:offdiag-preserves-base} and \eqref{eq:offdiag-raises-order}.

To deal with the diagonal, we use \eqref{eq:diagonal-residual-formula}. We consider the contribution from each of the terms in \eqref{eq:start} separately. If we set $t_j^{-2a-1}$ in place of $z_j$ in the expression
\begin{equation} \label{eq:express}
 \sum_j\frac{z_j}{t_j}-t_k^2\sum_{j\ne k}
 \frac{z_k/t_k-z_j/t_j}{t_j^2-t_k^2}
\end{equation}
in \eqref{eq:diagonal-residual-formula}, then it becomes, by polynomial division,  
{$(a+1)t_k^{-2a-2}$ plus a linear combination of
$t_k^{-2},\ldots,t_k^{-2a}$; after division by $t_k$ the additional
power lies in the required expansion or its remainder.} To deal with the contribution from the remainder $\eta_j$, we set $\rho_j\coloneqq \eta_j/t_j$ which we insert in place of $z_j/t_j$ in \eqref{eq:express}:
\begin{equation}\label{eq:rho}
 \sum_j\rho_j-t_k^2\sum_{j\ne k}
 \frac{\rho_k}{t_j^2-t_k^2}+t_k^2\sum_{j\ne k}
 \frac{\rho_j}{t_j^2-t_k^2}.
\end{equation}
Since $\bm \rho$ is in $\ell_{2M+2}^2$, we may apply
Lemma~\ref{lem:moment-subtraction} with $L=M+1$ to the third sum in \eqref{eq:rho}. Its zeroth moment cancels the first sum
$\sum_j\rho_j$.  The remaining moments
yield a linear combination of $t_k^{-2},\ldots,t_k^{-2M}$. The middle sum in \eqref{eq:rho} is in $\ell_{2M}^2$, since the sum 
 $\sum_{j\neq k} 
 \frac1{|t_j^2-t_k^2|}$ is uniformly bounded.
We conclude that also \eqref{eq:diagonal-even-expansion} holds.
\end{proof}
In the next lemma, we will use the following notation:
\[ \mathcal U(R,\rho_0)\coloneqq \{\bm v: \ \bm v\in \mathcal{B}(\bm \tau, \rho_0), \|\mathscr Z \bm v \|_{{\ell^\infty_1}}\le R \}. \]

\begin{lemma}\label{prop:inverse-preserves-expansions}
We make the same assumptions as in Lemma~\ref{lem:differential-expansions} and let $R>0$. Then
\begin{equation}\label{eq:inverse-preserves-expansions}
 D\mathscr Z(\mathbf t)^{-1}:\mathcal H_M(\mathbf t)
 \longrightarrow\mathcal H_M(\mathbf t)
\end{equation}
is uniformly bounded {on subsets of $\mathcal U(R,\rho_0)$
where $\|\mathscr Z(\mathbf t)\|_{\mathcal H_M(\mathbf t)}$
and $\sup_k k|t_k-c_k|$ are bounded.}
\end{lemma}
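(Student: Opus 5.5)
Write $A=D\mathscr Z(\bm t)=\Delta+A^\circ$ with $\Delta=\operatorname{diag}(A_{kk})$. Given $\bm g\in\mathcal H_M(\bm t)$, let $\bm v\coloneqq A^{-1}\bm g$. The plan is to bootstrap the regularity of $\bm v$ upward through the scale $\ell^\infty_1\supset\mathcal H_0(\bm t)\supset\cdots\supset\mathcal H_M(\bm t)$, using the identity
\[
 \bm v=\Delta^{-1}\bigl(\bm g-A^\circ\bm v\bigr).
\]
For the starting point, note that $\mathcal H_M(\bm t)\subset\ell^\infty_1$ continuously: the odd powers $t_j^{-2k-1}$ are $O(j^{-1})$, and an element of $\ell^2_{2M+1}$ is $O(j^{-2M-1})$. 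Since $\bm t\in\mathcal U(R,\rho_0)$, Lemma~\ref{prop:local-residual-inverse} gives $\bm v\in\ell^\infty_1$ with $\|\bm v\|_{\ell^\infty_1}\le C_R\|\bm g\|_{\mathcal H_M}$. In particular $(t_jv_j)$ is bounded, which is exactly the hypothesis under which Lemma~\ref{lem:differential-expansions} operates.

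Next I treat the diagonal. By \eqref{eq:diagonal-even-expansion}, $A_{kk}=\lambda_p+t_k\,e_k$, where $(e_k)\in\mathcal H_M(\bm t)$ has vanishing $1/t_k$ coefficient. Hence $A_{kk}-\lambda_p$ equals a finite combination of $t_k^{-2},\dots,t_k^{-2M}$ plus a remainder in $\ell^2_{2M}$. I then expand
\[
 1/A_{kk}=\lambda_p^{-1}\bigl(1+(A_{kk}-\lambda_p)/\lambda_p\bigr)^{-1}
\]
as a finite geometric series in the small quantity $A_{kk}-\lambda_p$; this quantity tends to $0$ and $|A_{kk}|\ge\lambda_p/2$ uniformly. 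The result is that $1/A_{kk}$ is $\lambda_p^{-1}$ plus even powers of $t_k^{-1}$ plus an $\ell^2_{2M}$ (indeed bounded, decaying) remainder. It follows that multiplication by $\Delta^{-1}$ maps $\mathcal H_m(\bm t)$ to itself boundedly for each $m\le M$: even powers times odd powers give odd powers, and the cross terms with remainders land in $\ell^2_{2m+1}$ because the even-power parts are bounded and the remainder of $1/A_{kk}$ times $O(t_k^{-1})$ lies in $\ell^2_{2m+1}$. The only point requiring care is to truncate the expansions at the right order.

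The induction then runs as follows. From $\bm v\in\ell^\infty_1$, \eqref{eq:offdiag-preserves-base} gives $A^\circ\bm v\in\mathcal H_0(\bm t)$. Since $\bm g\in\mathcal H_0$, the identity yields $\bm v\in\mathcal H_0(\bm t)$. Inductively, if $\bm v\in\mathcal H_{m-1}(\bm t)$ with $m\le M$, then \eqref{eq:offdiag-raises-order}, applied with $M$ replaced by $m$ (legitimate because $\mathscr Z\bm t\in\mathcal H_M\subset\mathcal H_m$), gives $A^\circ\bm v\in\mathcal H_m(\bm t)$, and therefore $\bm v\in\mathcal H_m(\bm t)$. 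After $M+1$ steps we reach $\bm v\in\mathcal H_M(\bm t)$. At each step the norm bound is a product of the uniform constants from Lemma~\ref{lem:differential-expansions}, the $\Delta^{-1}$ multiplier bound and the previous step, so it is uniform under the stated constraints. This gives \eqref{eq:inverse-preserves-expansions}.

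I expect the main obstacle to be bookkeeping rather than conceptual. First, one must check that the uniformity clauses of Lemma~\ref{lem:differential-expansions} really hold for the intermediate $m<M$, i.e.\ that $\|\mathscr Z\bm t\|_{\mathcal H_m}\le C\|\mathscr Z\bm t\|_{\mathcal H_M}$. Second, one must verify that the $\Delta^{-1}$ multiplier preserves $\mathcal H_m$ with constants depending only on the $\mathcal H_M$-norm of $\mathscr Z\bm t$ and on $\sup_k k|t_k-c_k|$; for this I would invoke Lemma~\ref{lem:change-of-sequence-variable} where needed to compare powers of $t_k$ and $c_k$.
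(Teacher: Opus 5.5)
Your proof is correct and follows the paper's own argument. It starts from $\bm v\in\ell^\infty_1$ via Lemma~\ref{prop:local-residual-inverse} and climbs through $\mathcal H_0(\bm t)\subset\cdots\subset\mathcal H_M(\bm t)$, using Lemma~\ref{lem:differential-expansions} for $A^\circ\bm v$ and the even diagonal expansion \eqref{eq:diagonal-even-expansion}. The only difference is that the paper does not invert the diagonal: it writes $\bm v=\lambda_p^{-1}\{\bm g-A^\circ\bm v-(A_{kk}-\lambda_p)v_k\}$ and treats $(A_{kk}-\lambda_p)v_k$ as the product of the even expansion (whose constant term vanishes) with $\bm v\in\mathcal H_{M-1}(\bm t)$, so your geometric-series expansion of $1/A_{kk}$ is valid but not needed.
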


\begin{proof}
We consider $\bm g=A\bm v$ in $\mathcal H_M(\mathbf t)$.  Lemma~\ref{prop:local-residual-inverse}
first shows that $\bm v$ satisfies $v_k=O(t_k^{-1})$.  When $M=0$,
Lemma~\ref{lem:differential-expansions} shows that 
$A^\circ \bm v$ is in $\mathcal H_0$ and
$(A_{kk}-\lambda_p)v_k$ is in $\ell_1^2$.  Therefore
\begin{equation}\label{eq:inverse-induction-identity}
 v=\lambda_p^{-1}
 \{g-A^\circ v-(A_{kk}-\lambda_p)v_k\}
 \in\mathcal H_0.
\end{equation}
Suppose now that the assertion is known at level $M-1$.  Since
$g,{z}\in\mathcal H_M(\bm t)\subset\mathcal H_{M-1}(\bm t)$, the induction hypothesis implies that
$v$ is in $\mathcal H_{M-1}(\bm t)$.  Lemma~\ref{lem:differential-expansions} puts
$A^\circ v$ in $\mathcal H_M(\bm t)$.  The even expansion
\eqref{eq:diagonal-even-expansion}, multiplied by
$v\in\mathcal H_{M-1}(\bm t)$, also belongs to $\mathcal H_M(\bm t)$.  For its weighted
remainder this is simply
\[
 t_k^{2M+1}|\delta_kv_k|
 \leq |t_k^{2M}\delta_k|\,|t_kv_k|.
\]
Formula \eqref{eq:inverse-induction-identity} completes the induction.  The
same estimates give the asserted uniform bounds.
\end{proof}

\subsection{Proof of Theorem~\ref{thm:main}}


We choose $K$ so large that the sequence $\mathbf a^{(K)}$ in
\eqref{eq:finite-comparison-sequence} belongs to $\mathcal B(\bm \tau ,\rho_0)$ and the line
segment from $0$ to $f^{(K)}$ is contained in $\mathscr Z \mathcal B(\bm \tau ,\rho_0)$, with $\rho_0$ as in Lemma~\ref{prop:local-residual-inverse}.  This is possible
because $\mathbf a^{(K)}\to\boldsymbol\tau$ in $\ell^2$ and
$f^{(K)}\to0$.  We fix this $K$ 
and define
\begin{equation}\label{eq:residual-path}
 \mathbf t(s)\coloneqq \mathscr Z^{-1}(sf^{(K)}),\qquad0\leq s\leq1.
\end{equation}
Then
\begin{equation}\label{eq:residual-path-endpoints}
 \mathbf t(0)=\boldsymbol\tau,\qquad
 \mathbf t(1)=\mathbf a^{(K)},\qquad
 \dot{\mathbf t}(s)\coloneqq \frac{\dd}{\dd s}\bm{t}(s)=D\mathscr Z(\mathbf t(s))^{-1}f^{(K)}.
\end{equation}
By Lemma~\ref{lem:finite-comparison-residual},
$f_k^{(K)}=O(c_k^{-1})$.  The last part of
Lemma~\ref{prop:local-residual-inverse} therefore gives
\[
 \dot{\bm t}_k(s)=O(c_k^{-1})
\]
uniformly for $0\leq s\leq1$.  Since $t_k(1)=c_k$ for $k>K$, it follows
that
\begin{equation}\label{eq:rough-path-estimate}
 t_k(s)-c_k=O(c_k^{-1})
\end{equation}
uniformly on the path. Therefore, since $c_{k+1}-c_k=1$, there exists a positive integer $N$ such that $t_{k+1}(s)-t_k(s)\ge \tfrac12$ when $k>N$. 
Each of the functions $s\mapsto t_{k+1}(s)-t_k(s)$ is continuous and positive  on $[0,1]$. Hence, by compactness of $[0,1]$, we have the required uniform separation $\inf_{0\le s \le 1}\inf_{k\ge 1}(t_{k+1}(s)-t_k(s))>0$.

Lemma~\ref{lem:finite-comparison-residual} also gives
\begin{equation}\label{eq:comparison-in-all-HM}
 f^{(K)}\in\mathcal H_M(\mathbf c)
 \qquad(M=0,1,2,\ldots).
\end{equation}
By \eqref{eq:rough-path-estimate} and
Lemma~\ref{lem:change-of-sequence-variable}, the spaces
$\mathcal H_0(\mathbf t(s))$ and $\mathcal H_0(\mathbf c)$ agree uniformly.
Since $\mathscr Z(\mathbf t(s))=sf^{(K)}$,
Lemma~\ref{prop:inverse-preserves-expansions} gives
that $\dot{\mathbf t}(s)$ is in $\mathcal H_0(\mathbf c)$
uniformly. Writing
\begin{equation} \label{eq:integral}
 \mathbf t(s)-\mathbf c=\mathbf a^{(K)}-\mathbf c
 -\int_s^1\dot{\mathbf t}(u)\dd u ,
\end{equation}
we conclude that the whole path, and in particular
$\bm\tau-\bm c$, belongs to $\mathcal H_0(\bm c)$.

Suppose inductively that
$\mathbf t(s)-\mathbf c$ is in $\mathcal H_{M-1}(\mathbf c)$ uniformly on the
path. Then Lemma~\ref{lem:change-of-sequence-variable} implies that
$\mathcal H_M(\mathbf t(s))=\mathcal H_M(\mathbf c)${ with uniform norm equivalence.
In particular, $\mathscr Z(\mathbf t(s))=sf^{(K)}$ is uniformly bounded
in $\mathcal H_M(\mathbf t(s))$, so the strengthened inverse bounds apply.} Equations
\eqref{eq:comparison-in-all-HM} and
\eqref{eq:residual-path-endpoints}, together with
Lemma~\ref{prop:inverse-preserves-expansions}, give that
$\dot{\mathbf t}(s)$ is in $\mathcal H_M(\mathbf c)$
with uniform control of norms. Resorting again to \eqref{eq:integral}, we find that {$\mathbf t(s)-\bm c$ is uniformly bounded} in $\mathcal H_M(\mathbf c)$.
{Here the coefficient projections are limits of coordinate
expressions. The paths are therefore strongly measurable in the separable
spaces $\mathcal H_M(\bm c)$, and the uniform bounds justify the integrals.}

The preceding argument establishes Theorem~\ref{thm:main}. Indeed, the coefficients $a_k$ in \eqref{eq:HM-definition} of a vector $\bm u$ in $\mathcal H_M(\bm c)$ are clearly unique. Since $\mathcal H_{M+1}(\bm c)\subset\mathcal H_M(\bm c)$, the coefficients found
at successive levels agree. This means that if we let
$[u]_{q,\mathbf c}$ denote the coefficient of $c^{-(2q+1)}$ in the unique
decomposition of $u$ in $\mathcal H_q(\mathbf c)$, then we have identified the coefficients in the expansion \eqref{eq:complete-zero-Poincare} as
\begin{equation}\label{eq:coefficient-path-formula}
 a_{p,q}=-\int_0^1
 \left[D\mathscr Z(\mathbf t(s))^{-1}f^{(K)}\right]_{q,\mathbf c}\dd s.
\end{equation}
By uniqueness of the expansion, the latter integral does not depend on our choice of $K$.

\section{Fourier regularity}\label{sec:Fourier-regularity}
\begin{proof}[Proof of Corollary~\ref{cor:Fourier}]
Put $\alpha=2/p$ and $r_p=\varphi_p-K(p)E_p$, with $K(p)$ as in the
introduction.  Theorem~\ref{thm:main} gives
$\Delta_n\coloneqq\tau_n^2-c_n^2=O_p(1)$; for $p=1$ the same conclusion
follows from the zero expansion of \cite{BORS} recalled in the introduction.
For $x\in I_m=[c_m-1/2,c_m+1/2]$ and sufficiently large $m$, set
\[
 P_m(x)=\prod_{n\ne m}
 \left(1+\frac{\Delta_n}{c_n^2-x^2}\right).
\]
Splitting the sum at $m/2$ and $2m$ gives, uniformly on $I_m$,
\[
 \sum_{n\ne m}\left|\frac{\Delta_n}{c_n^2-x^2}\right|
 \le C_p\frac{\log(2+m)}m,
 \qquad
 P_m(x)=1+O_p\!\left(\frac{\log(2+m)}m\right).
\]
The exact product identity is
\[
 r_p(x)=K(p)\left[
 E_p(x)(P_m(x)-1)
 +\Delta_m\frac{E_p(x)}{c_m^2-x^2}P_m(x)\right].
\]
Lemma~\ref{lem:Gamma-properties} yields $|E_p(x)|\le C_pm^{-\alpha}$
and $|E_p(x)/(c_m^2-x^2)|\le C_pm^{-\alpha-1}$, with the latter quotient
continued at $x=c_m$.  By evenness and boundedness on compact intervals,
\begin{equation}\label{eq:physical-rough}
 |r_p(x)|\le C_p
 \frac{\log(2+|x|)}{(1+|x|)^{\alpha+1}}.
\end{equation}
Consequently $(1+x^2)^{s/2}r_p(x)\in L^2(\R)$ for every
$s<\alpha+1/2$.  Weighted Plancherel therefore gives
$h_p\coloneqq\widehat r_p\in\bigcap_{s<\alpha+1/2}H^s(\R)$.
Its support lies in $[-\pi,\pi]$ by the Paley--Wiener theorem, and the
Fourier transform of $E_p$ displayed in the introduction gives the stated
coefficient $A(p)$.
\end{proof}

\subsection*{LLM usage disclosure:} The work on this project started as a natural continuation of \cite{BCOS}. The authors provided a roadmap to tackle the problem using the scheme previously used in \cite{BORSprelim}. After this, they used a combination of the LLMs GPT-5.6 Sol, Claude Fable 5, and  later GPT-6 Astra to fill in the technical details, and provide mathematical feedback. The results were then rewritten by the authors who take full responsibility for their correctness.

\end{document}